\documentclass{amsart}

\usepackage[margin=2.5cm]{geometry}
\usepackage{amssymb,amsmath,upgreek,mathrsfs,bm,cases,latexsym,
	enumerate,graphicx,algorithm,array,booktabs,mathtools,placeins,xcolor,float}
	\usepackage{algorithm}
	\usepackage{graphicx}
	\usepackage{booktabs}
	\usepackage{tabularx}
\usepackage{algpseudocode}
\floatname{algorithm}{Algorithm}

\DeclareMathOperator*{\argmin}{arg\,min}

\makeatletter
\providecommand*{\toclevel@algorithm}{0}
\makeatother

\usepackage{hyperref}
\usepackage{xspace}
\hypersetup{
	colorlinks=true,
	linkcolor={blue},
	citecolor={blue},
	urlcolor={blue}
}

\newcommand{\probref}[2]{\hyperref[#1]{\textup{#2}}\xspace}

\newtheorem{theorem}{ \bf Theorem}[section]
\newtheorem{proposition}{ \bf Proposition}[section]
\newtheorem{lemma}{ \bf Lemma}[section]
\newtheorem{remark}{ \bf Remark}[section]
\newtheorem{definition}{ \bf Definition}[section]

\newtheorem{corollary}{ \bf Corollary}[section]
\numberwithin{equation}{section}
\allowdisplaybreaks[4]
\begin{document}
	\title[Accelerated Gradient Flow ]{Accelerated Gradient Flow with Endogenous Gradient-Memory Anchor: Selection and Restoring Damping}

	\author[C. Izuchukwu]{Chinedu Izuchukwu}
	\address[C. Izuchukwu]{School of Mathematics, University of the Witwatersrand, Private Bag 3, Johannesburg, 2050, South Africa}
	\email{chinedu.izuchukwu@wits.ac.za}

		\author[E. Obini]{Ernest Obini}
	\address[E. Obini]{School of Mathematics, University of the Witwatersrand, Private Bag 3, Johannesburg, 2050, South Africa}
	\email{3181824@students.wits.ac.za}

		\author[J. Yao]{Jen-Chih Yao}
	\address[J. Yao]{Center for General Education, China Medical University, Taichung 40402, Taiwan and Academy of Romanian Scientists, Bucharest, Romania}
	\email{yaojc@mail.cmu.edu.tw}

		\author[S. Zeng]{Shengda Zeng}
	\address[S. Zeng]{National Center for Applied Mathematics in Chongqing, Chongqing Normal University, Chongqing 401331, China}
	\email[Corresponding author]{zengshengda@163.com}

	\begin{abstract}
	We introduce an accelerated gradient flow with an endogenous gradient-memory anchor and a nonlinear restoring--damping feedback. The anchor evolves from a weighted history of the gradients, while the nonlinear feedback is modulated by the squared coordinatewise displacement from the anchor. The resulting dynamics uses only first-order information from the objective function and does not involve explicit Hessian information. Under suitable assumptions, we establish global existence and uniqueness of strong solutions. A Lyapunov analysis yields the accelerated objective-value estimate $F(x(t))-F^\star=\mathcal{O}(t^{-2})$, with the nonlinear restoring--damping term contributing additional dissipation. Under similar assumptions, the primal trajectory converges strongly to a minimizer. When, in addition, the solution set $S$ is affine, the limit is identified explicitly as the Euclidean projection $P_S(z_0)$ of the initial anchor $z_0$ onto $S$. In particular, for rank-deficient least-squares problems, the dynamics selects the least-squares solution closest to $z_0$. We also establish a finite weighted dissipation estimate for the phase variable. Moreover, in coordinates where the nonlinear feedback is active, if the trajectory remains separated from the anchor over an interval, then the corresponding homogeneous phase dynamics acquires an additional polynomial decay factor. Numerical experiments illustrate the minimizer-selection, accelerated objective-value decay, and restoring--damping mechanisms.
	\end{abstract}
	
	\maketitle
	
	\medskip
	\textbf{MSC 2020:}  37N40, 90C25, 34D05, 34A34.
	
	\medskip  
	\textbf{Keywords:} Accelerated gradient flow; convex optimization; inertial dynamics;
	endogenous gradient-memory anchor; minimizer selection;
	nonlinear restoring damping.
	
	\section{Introduction}
	\label{sec:introduction}
	
	\subsection{Background and Motivation}
	\label{subsec:background-motivation}
	Continuous-time dynamical systems provide a useful framework for understanding and designing accelerated optimization methods \cite{SuBoydCandes16,WWJ}. A fundamental example is the differential equation associated with Nesterov's accelerated gradient method \cite{SuBoydCandes16},
	\begin{equation}\label{NODE}
		\ddot{x}(t)
		+
		\frac{\alpha}{t}\dot{x}(t)
		+
		\nabla F(x(t))
		=
		0,
		\qquad t>0,
	\end{equation}
	which, for a convex and continuously differentiable objective function $F$ and suitable values of $\alpha$, enjoys the accelerated objective-value estimate
	$$
	F(x(t))-F^\star=\mathcal{O}(t^{-2}).
	$$
	This continuous-time viewpoint has led to several extensions of accelerated dynamics, with particular attention to their convergence, stability, and asymptotic behavior \cite{Alvarez00, pol, WWJ}.
	
	The first issue relevant to the present work is \emph{stabilization}. Inertial dynamics may exhibit oscillatory transients, particularly when the damping is weak or the problem is poorly conditioned. From an optimization point of view, such oscillations are undesirable because the trajectory may repeatedly overshoot the minimizer, the objective values may become nonmonotone, and the system may spend a relatively long time oscillating around the solution set before settling. Similar effects are well known in accelerated first-order methods, where excessive momentum may lead to oscillatory or nearly periodic behavior; this is one of the motivations behind restart strategies and additional damping mechanisms \cite{ODC,AC,APR,ABC}.
	
	 Our focus is on damping mechanisms, notably Hessian-driven damping \cite{APR} and closed-loop damping \cite{ABC}. The Hessian-driven damping introduces curvature-dependent dissipation through terms involving
	$
	\nabla^2F(x(t))\dot{x}(t)
	$
	\cite{APR}, while closed-loop damping uses feedback from the current state of the system; in the main model of Attouch--Bo\c{t}--Csetnek \cite{ABC}, this feedback acts through the velocity. This naturally raises the question of whether one can obtain an effective displacement-dependent stabilization mechanism while still using only first-order information from the objective function.	

	The second issue is \emph{minimizer selection}. When $\arg\min F$ contains more than one point, convergence of the objective values does not determine which minimizer is eventually selected by the dynamics. 
	Vanishing Tikhonov regularization provides one way of addressing this question, including accelerated dynamics with strong convergence to the minimum-norm solution \cite{ACR,AlecsaLaszlo21,AttouchLaszlo24,AttouchChbaniRiahi26}. Fixed-anchor and Halpern-type dynamics instead use a prescribed reference point \cite{SPR}. These mechanisms select distinguished minimizers through externally prescribed regularization or anchoring.
	 
	In the present paper, we consider a different mechanism. We introduce an auxiliary state $z(t)$ whose evolution is determined by the gradient history of the trajectory. More precisely,
	$$
	z(t)
	=
	z_0
	-
	\frac{1-a}{\lambda}
	\int_{t_0}^{t}s\,\nabla F(x(s))\,ds,
	\qquad
	\lambda=\alpha-1.
	$$
	Thus, only the initial value $z_0$ is prescribed externally. After initialization, the state $z(t)$ evolves together with the trajectory and records a weighted history of the gradients. For this reason, we refer to $z$ as an \emph{endogenous gradient-memory anchor}. The anchor also provides a natural reference state for introducing additional stabilization. We therefore construct a nonlinear restoring--damping feedback whose gain depends on the coordinatewise displacement $x(t)-z(t)$. In each coordinate, this gain increases with the squared displacement from the anchor and becomes weaker as the displacement decreases. Importantly, this mechanism uses only first-order information from $F$ and does not involve explicit Hessian information.

	The proposed dynamics is therefore designed to combine three features: the accelerated $\mathcal{O}(t^{-2})$ objective-value behavior associated with Nesterov-type dynamics \cite{SuBoydCandes16}, an endogenous mechanism that can influence the asymptotic selection of minimizers, and a displacement-sensitive restoring--damping effect. Detailed comparisons with Nesterov's ODE, Hessian-driven damping, Tikhonov regularization, fixed-anchor schemes, and closed-loop damping are given in Section~\ref{sec:relation-existing-models}.
	
	\subsection{Main Contributions}
	\label{subsec:main-contributions}
	
	The main contributions of the paper are summarized as follows.
	\begin{enumerate}
		\item[\rm (i)]
		We introduce an accelerated gradient flow in which the accelerated phase is coupled to a moving anchor generated internally from weighted past gradients. The resulting feedback produces both damping and restoring effects, with a gain depending on the squared coordinatewise displacement from the anchor. In contrast with Hessian-driven damping, the proposed mechanism uses only first-order information from $F$ and does not involve explicit Hessian information.
		
		\item[\rm (ii)]
		We establish global existence and uniqueness of strong solutions, and construct a Lyapunov functional that gives
		$
		F(x(t))-F^\star=\mathcal{O}(t^{-2}),
		$
		while the nonlinear restoring--damping term appears with a favorable dissipative sign in the Lyapunov estimate. Hence the additional feedback does not destroy the accelerated objective-value rate.
		
		\item[\rm (iii)]
		
		When $\gamma>0$, we prove that the primal trajectory converges strongly to a minimizer of $F$. Moreover, if
		$
		S:=\arg\min F
		$
		is a nonempty affine set, then the limiting point is identified explicitly by $
		x(t)\longrightarrow P_S(z_0)
		~~\text{as }t\to\infty.$
		Thus, in the affine-solution setting, the initial anchor $z_0$ determines the minimizer selected by the dynamics. In particular, for rank-deficient least-squares problems, the trajectory converges to the least-squares solution closest to $z_0$ in Euclidean distance (Corollary \ref{cor:rank-deficient-least-squares-selection}).
		
		\item[\rm (iv)]
		We obtain finite weighted coordinatewise phase dissipation. In particular, persistent large phase amplitudes cannot be maintained, in a precise weighted sense, in coordinates that remain significantly separated from the anchor. Moreover,  we obtain an additional polynomial attenuation of the homogeneous phase response on intervals where the corresponding anchor displacement remains bounded away from zero. These estimates provide a quantitative description of the stabilizing effect of the nonlinear restoring--damping mechanism.
	\end{enumerate}
	\noindent
	The numerical experiments complement the theoretical analysis by illustrating the three main mechanisms developed in the paper: anchor-induced minimizer selection, preservation of the accelerated objective-value decay, and coordinatewise restoring--damping.

\subsection{Organization}
\label{subsec:organization}
The remainder of the paper is organized as follows. Section \ref{MC} develops the proposed dynamics. Section \ref{sec:relation-existing-models} discusses the relations of our proposed dynamics with existing optimization dynamics. Section \ref{EU} establishes existence and uniqueness of solutions. Section \ref{sec:convergence-selection-damping} presents the main convergence results, including Lyapunov analysis, accelerated objective-value convergence, trajectory convergence, minimizer selection, and quantitative restoring--damping effects. Section \ref{sec:numerical-experiments} presents numerical experiments, and Section \ref{Con} concludes the paper.

\section{Model Construction}\label{MC}
\noindent The starting point of our construction is the accelerated phase appearing in the classical Nesterov ODE Lyapunov analysis \cite{SuBoydCandes16}: 
$$
t\dot x(t)+(\alpha-1)\bigl(x(t)-x^\star\bigr).
$$
In our construction, we \emph{internalize} this expression by replacing the reference minimizer $x^\star$ appearing in the Lyapunov phase with an auxiliary state $z(t)$. Thus, setting
$
\lambda:=\alpha-1>0,
$
we introduce the phase variable
\begin{equation}\label{PR}
	y(t):=t\dot x(t)+\lambda\bigl(x(t)-z(t)\bigr).
\end{equation}
We call this identity the \emph{internalized accelerated phase relation}. Its role is to combine the scaled velocity $t\dot x(t)$ with the displacement from the anchor $\lambda(x(t)-z(t))$.
 We next specify how the phase acts on the primal motion. Let $a\in[1/2,1]$ and $\gamma\geq0$. We consider
\begin{equation}\label{MD}
	\ddot x(t)
	=
	-\frac{\lambda+1}{t}\dot x(t)
	-\frac{\gamma}{t}y(t)
	-a\nabla F(x(t)).
\end{equation}
Here $a$ controls the direct gradient forcing, while $\gamma$ determines the strength of the phase feedback. It remains to specify the evolution of the anchor $z(t)$. Differentiating the phase relation \eqref{PR}, we obtain
$$
\dot y(t)
=
\lambda\bigl(\dot x(t)-\dot z(t)\bigr)
+\dot x(t)
+t\ddot x(t).
$$
Using \eqref{MD}, this becomes
$$
\dot y(t)
=
-\lambda\dot z(t)
-\gamma y(t)
-a\,t\nabla F(x(t)).
$$
We couple the anchor to the gradient signal by choosing
\begin{equation}\label{zdot}
	\dot z(t)
	=
	-\frac{t}{\lambda}(1-a)\nabla F(x(t)).
\end{equation}
With this choice, the phase satisfies the closed identity
$$
\dot y(t)+\gamma y(t)
=
(1-2a)t\nabla F(x(t)).
$$
The parameters $a$ and $1-a$ determine, respectively, the direct gradient forcing in the primal equation and the gradient-memory contribution to the anchor evolution. This choice also produces the phase structure used in the Lyapunov analysis developed later.

 Integrating \eqref{zdot} from $t_0$ to $t$ gives
\begin{equation}\label{z}
	z(t)
	=
	z(t_0)
	-\frac{1-a}{\lambda}
	\int_{t_0}^{t}s\,\nabla F(x(s))\,ds.
\end{equation}
Thus, for $a<1$, the anchor is generated dynamically from a weighted history of the gradients, and we refer to it as an \emph{endogenous gradient-memory anchor}. At the endpoint $a=1$, the integral term vanishes and $z(t)\equiv z(t_0)$, giving the fixed-anchor case considered separately later. Combining \eqref{PR}, \eqref{MD}, and \eqref{zdot}, we obtain the following coupled time-dependent system:
\begin{equation}\label{eq:SYS}
	\left\{
	\begin{aligned}
		y(t) &= \lambda\bigl(x(t)-z(t)\bigr)+t\dot x(t),\\
		\ddot x(t) &= -\frac{\lambda+1}{t}\dot x(t)-\frac{\gamma}{t}y(t)-a\nabla F(x(t)),\\
		\dot z(t) &= -\frac{t}{\lambda}(1-a)\nabla F(x(t)),
	\end{aligned}
	\right.
	\qquad
	t\geq t_0>0.
\end{equation}
Substituting the phase relation into the acceleration equation shows that system \eqref{eq:SYS} is equivalently written as
\begin{equation}\label{eq:SYS2}
	\ddot x(t)
	+
	\left(\frac{\alpha}{t}+\gamma\right)\dot x(t)
	+
	\frac{\gamma\lambda}{t}\bigl(x(t)-z(t)\bigr)
	+
	a\nabla F(x(t))
	=
	0,
\end{equation}
together with the anchor equation \eqref{zdot}.
The phase feedback therefore produces two additional terms in the primal equation: the viscous damping term $\gamma\dot x(t)$ and the linear restoring term
$
\frac{\gamma\lambda}{t}\bigl(x(t)-z(t)\bigr).
$
The latter has the form of a vanishing linear restoring force toward the current anchor. These effects are isotropic with respect to the anchor displacement, since their coefficients do not depend on the individual coordinatewise displacements from the anchor.

\hfill

\noindent To introduce a displacement-sensitive correction, let
$
M=\operatorname{Diag}(m_1,\ldots,m_d)
$
with $m_i\geq0$, and for
$
u=(u_1,\ldots,u_d)\in\mathbb{R}^d,
$
define
\begin{equation}
	D_M(u)
	:=
	M\operatorname{Diag}(u_1^2,\ldots,u_d^2)M
	=
	\operatorname{Diag}\bigl(m_1^2u_1^2,\ldots,m_d^2u_d^2\bigr).
	\label{eq:DM-definition}
\end{equation}
Then $D_M(u)$ is positive semidefinite, and its $i$-th diagonal entry $m_i^2u_i^2$ represents a weighted squared coordinatewise displacement. Taking $u=x(t)-z(t)$ therefore makes the feedback gain depend on the squared coordinatewise displacement from the anchor.

 We consequently add to the acceleration equation in \eqref{eq:SYS} the nonlinear phase-feedback term
$
-\frac{\eta}{t^2}
D_M\bigl(x(t)-z(t)\bigr)y(t),
~
\eta\geq0.
$
This gives the modified phase-feedback system
\begin{equation}
	\begin{cases}
		y(t)
		=
		\lambda\bigl(x(t)-z(t)\bigr)+t\dot{x}(t),
		\\[1.5mm]
		\ddot{x}(t)
		=
		-\dfrac{\lambda+1}{t}\dot{x}(t)
		-\dfrac{\gamma}{t}y(t)
		-\dfrac{\eta}{t^2}
		D_M\bigl(x(t)-z(t)\bigr)y(t)
		-a\nabla F(x(t)),
		\\[3mm]
		\dot{z}(t)
		=
		-\dfrac{t}{\lambda}(1-a)\nabla F(x(t)).
	\end{cases}
	\label{eq:system7}
\end{equation}
\noindent Using \eqref{PR} and the anchor equation, the phase satisfies
$$
\dot y(t)
+
\left[
\gamma I
+
\frac{\eta}{t}
D_M\bigl(x(t)-z(t)\bigr)
\right]y(t)
=
(1-2a)t\nabla F(x(t)).
$$
Hence the effective damping coefficient in the $i$-th phase coordinate is
$$
\gamma
+
\frac{\eta m_i^2|x_i(t)-z_i(t)|^2}{t}.
$$
For $\eta>0$ and $m_i\neq0$, the nonlinear part of this coefficient increases quadratically with the coordinatewise displacement for fixed $t$. If $m_i=0$, the nonlinear feedback is inactive in the $i$-th coordinate.

Expanding the phase variable in the acceleration equation, system \eqref{eq:system7} is equivalently written:
\begin{equation}
	\begin{aligned}
		\ddot{x}(t)
		&+
		\left(\frac{\alpha}{t}+\gamma\right)\dot{x}(t)
		+
		\frac{\gamma\lambda}{t}\bigl(x(t)-z(t)\bigr)
		+
		\frac{\eta}{t}D_M\bigl(x(t)-z(t)\bigr)\dot{x}(t)
		\\
		&\quad
		+
		\frac{\eta\lambda}{t^2}
		D_M\bigl(x(t)-z(t)\bigr)\bigl(x(t)-z(t)\bigr)
		+
		a\nabla F(x(t))
		=
		0,
	\end{aligned}
	\label{eq:system8}
\end{equation}
together with
\begin{equation}
	\dot{z}(t)
	=
	-\frac{t}{\lambda}(1-a)\nabla F(x(t)).
	\label{eq:z-dynamics}
\end{equation}
\noindent The term
$$
\frac{\eta}{t}D_M\bigl(x(t)-z(t)\bigr)\dot{x}(t)
$$
is a nonlinear state-dependent damping term. Since $t>0$, $\eta\geq0$, and $D_M(x-z)\succeq0$, we have
\begin{equation}
	\begin{aligned}
		\left\langle
		\frac{\eta}{t}
		D_M\bigl(x(t)-z(t)\bigr)\dot{x}(t),
		\dot{x}(t)
		\right\rangle
		&=
		\frac{\eta}{t}
		\sum_{i=1}^d
		m_i^2
		\bigl(x_i(t)-z_i(t)\bigr)^2
		\dot{x}_i(t)^2
		\geq0.
	\end{aligned}
	\label{eq:DM-dissipation}
\end{equation}
Thus its coordinatewise dissipative weight is
$$
\frac{\eta m_i^2}{t}|x_i(t)-z_i(t)|^2,
$$
which, for fixed $t$, increases quadratically with the coordinatewise displacement.
On the other hand, the term
$$
\frac{\eta\lambda}{t^2}
D_M\bigl(x(t)-z(t)\bigr)\bigl(x(t)-z(t)\bigr)
$$
is the corresponding nonlinear restoring term. Componentwise, it is given by
\begin{equation}
	\begin{aligned}
		&\frac{\eta\lambda}{t^2}
		D_M\bigl(x(t)-z(t)\bigr)
		\bigl(x(t)-z(t)\bigr)
		=
		\frac{\eta\lambda}{t^2}
		\bigl(
		m_1^2(x_1(t)-z_1(t))^3,\ldots,
		m_d^2(x_d(t)-z_d(t))^3
		\bigr).
	\end{aligned}
	\label{eq:nonlinear-restoring-componentwise}
\end{equation}
Since this term appears on the left-hand side of \eqref{eq:system8}, its contribution to the acceleration has the opposite sign. Hence, in each active coordinate, it acts toward the current anchor, with magnitude proportional to $|x_i(t)-z_i(t)|^3$ for fixed $t$.
Thus, the nonlinear phase feedback adds a displacement-sensitive damping and restoring mechanism to the linear phase-feedback system \eqref{eq:SYS2}. When $\eta=0$, system \eqref{eq:system7} reduces to \eqref{eq:SYS}. Moreover, the objective function enters the construction only through $\nabla F$, so the proposed mechanism does not involve explicit Hessian information.

\section{Relations with Existing Optimization Dynamics}
\label{sec:relation-existing-models}

\noindent A natural starting point for comparison is the Nesterov ODE of Su--Boyd--Cand\`es \cite{SuBoydCandes16}, given in \eqref{NODE}. Our system \eqref{eq:system7} recovers \eqref{NODE} when
$
a=1,\ \gamma=0,\ \eta=0.
$
Indeed, in this case the anchor is fixed and decouples from the primal equation. The classical Nesterov ODE contains no anchor state and therefore no corresponding mechanism through which an initial reference point can influence the asymptotic selection of a minimizer when $\argmin F$ is not a singleton.
 The proposed system \eqref{eq:system7} is also related to the Hessian-driven damping dynamics of Attouch--Peypouquet--Redont \cite{APR},
\begin{equation}\label{Hessian}
	\ddot{x}(t)
	+
	\frac{\alpha}{t}\dot{x}(t)
	+
	\beta\nabla^2F\bigl(x(t)\bigr)\dot{x}(t)
	+
	\nabla F\bigl(x(t)\bigr)
	=
	0.
\end{equation}
The additional term
$
\beta\nabla^2F(x(t))\dot{x}(t)
$
introduces curvature-dependent damping. By contrast, the nonlinear feedback in \eqref{eq:system7} does not involve explicit Hessian information. Its nonlinear gain is determined by the displacement from the endogenous anchor. More precisely, the $i$-th phase coordinate has the effective damping coefficient
$$
\gamma
+
\frac{\eta m_i^2|x_i(t)-z_i(t)|^2}{t}.
$$
Thus, for $\eta>0$ and $m_i\neq0$, the nonlinear part of the phase damping increases with the squared coordinatewise displacement from the anchor for fixed $t$.

 A further comparison is with Tikhonov-regularized inertial dynamics of the form
\begin{equation}\label{TRID}
	\ddot{x}(t)
	+
	\frac{\alpha}{t}\dot{x}(t)
	+
	\nabla F(x(t))
	+
	\varepsilon(t)x(t)
	=
	0,
	\qquad
	\varepsilon(t)\geq0,
	\qquad
	\varepsilon(t)\to0,
\end{equation}
as studied, for example,  in \cite{ACR,AttouchLaszlo24} (see also \cite{BCL,Las} for related Tikhonov-regularized systems). The term $\varepsilon(t)x(t)$ is the gradient of the time-dependent quadratic regularizer
$$
\frac{\varepsilon(t)}{2}\|x\|^2,
$$
and therefore introduces an externally prescribed attraction toward the origin. Under suitable assumptions, vanishing Tikhonov regularization yields strong minimum-norm selection, and a shifted quadratic regularizer can similarly target the solution closest to a prescribed state \cite{AttouchLaszlo24}. The distinction here is that no external regularization schedule is imposed: $z(t)$ is generated endogenously from weighted gradient history. Under the assumptions of Theorem~\ref{thm:anchor-induced-projection-selection}, when
$
S:=\argmin F
$
is a nonempty affine subspace,
$$
x(t)\longrightarrow P_S(z_0).
$$
 The proposed dynamics is also related to anchor-acceleration and Halpern-type continuous-time systems. Suh--Park--Ryu \cite{SPR} study the differential inclusion
\begin{equation}\label{FAH}
	\dot{x}(t)
	\in
	-A(x(t))
	-
	\beta(t)\bigl(x(t)-x_0\bigr),
\end{equation}
where the anchor $x_0$ is prescribed and remains fixed throughout the evolution. In contrast, only the initial value $z_0$ is prescribed in our system. Thereafter,
$$
\dot z(t)
=
-\frac{1-a}{\lambda}\,t\nabla F(x(t)),
$$
and hence
$$
z(t)
=
z_0
-
\frac{1-a}{\lambda}
\int_{t_0}^{t}
s\nabla F(x(s))\,ds.
$$
Thus, $z(t)$ is a trajectory-dependent state generated from the past gradient signal rather than a fixed external reference point. Moving-anchor extragradient schemes have also been studied for structured minimax problems \cite{AlcalaChowSunkula23}; their setting and update mechanism are different from the present continuous gradient-memory anchor.

The proposed system is also related to inertial dynamics with closed-loop damping. Attouch--Bo\c{t}--Csetnek \cite{ABC} consider, as a principal model,
\begin{equation}\label{CL}
	0
	\in
	\ddot{x}(t)
	+
	\partial\phi\bigl(\dot{x}(t)\bigr)
	+
	\nabla F\bigl(x(t)\bigr),
\end{equation}
where the nonlinear damping law $\partial\phi(\dot{x}(t))$ acts through the velocity. By contrast, the feedback in \eqref{eq:system7} is nonautonomous and acts on the accelerated phase
$$
y(t)
=
t\dot{x}(t)
+
\lambda\bigl(x(t)-z(t)\bigr)
$$
through
$$
-\frac{\gamma}{t}y(t)
-
\frac{\eta}{t^2}
D_M\bigl(x(t)-z(t)\bigr)y(t).
$$
The nonlinear gain is therefore modulated by the squared coordinatewise displacement from the endogenous anchor.
 These comparisons highlight the main structural distinction of the proposed dynamics. It combines Nesterov-type inertial motion with an endogenous gradient-memory anchor and a displacement-sensitive restoring--damping feedback. The results established later show that, for
$
\frac12<a<1,~\alpha\geq3,~ \gamma,\eta\geq0,
$
under the stated assumptions on $F$, this structure preserves the accelerated objective-value estimate
$$
F(x(t))-F^\star=\mathcal O(t^{-2}).
$$
When $\gamma>0$, the primal trajectory converges strongly to a minimizer. When, in addition, the solution set $S$ is affine, the limit is identified as $P_S(z_0)$. When $\eta>0$, the nonlinear term also yields a weighted phase-dissipation estimate and, for coordinates with $m_i\neq0$, an additional polynomial attenuation of the homogeneous phase response on intervals where the corresponding anchor displacement remains bounded away from zero; see Subsection~\ref{subsec:quantitative-restoring-damping}.

\begin{table}[htbp]
	\centering
	\small
	\caption{Comparison with closely related accelerated and inertial dynamics.}
	\label{tab:compact-model-comparison}
	\begin{tabularx}{\textwidth}{
			p{0.21\textwidth}
			p{0.25\textwidth}
			X
		}
		\toprule
		\textbf{Model class}
		&
		\textbf{Main mechanism}
		&
		\textbf{Structural distinction from the present work}
		\\
		\midrule
		
		Nesterov's ODE \eqref{NODE}
		&
		Vanishing viscous damping and accelerated objective-value decay.
		&
		It contains no anchor or gradient-memory state and therefore no corresponding
		anchor-based selection mechanism.
		\\\\[0.8ex]
		
		Hessian-driven damping \eqref{Hessian}
		&
		Curvature-dependent damping through
		$\nabla^{2}F(x(t))\dot{x}(t)$.
		&
		The present nonlinear feedback does not involve explicit Hessian information
		and has a gain modulated by the squared coordinatewise displacement from the
		endogenous anchor.
		\\\\[0.8ex]
		
		Tikhonov-regularized inertial dynamics \eqref{TRID}
		&
		Vanishing regularization and, under suitable assumptions, minimum-norm selection.
		&
		The present system uses no external regularization schedule. When the solution set $S$ is affine and the assumptions of
		Theorem~\ref{thm:anchor-induced-projection-selection} hold, it selects
		$P_S(z_0)$.
		\\[0.8ex]
		
		Anchor-acceleration and Halpern-type dynamics \eqref{FAH}
		&
		Feedback toward an externally prescribed fixed anchor.
		&
		Only $z_0$ is prescribed externally in the present system; thereafter, $z(t)$ evolves endogenously as a weighted memory of the past gradients.
		\\\\[0.8ex]
		
		Closed-loop damping \eqref{CL}
		&
		Nonlinear feedback acting through the velocity in the principal model.
		&
		The present feedback acts on the accelerated phase $y(t)$, with a nonlinear gain modulated by the squared coordinatewise displacement from the endogenous
		anchor.
		\\
		\bottomrule
	\end{tabularx}
\end{table}

\section{Existence and Uniqueness}\label{EU}

In this section, we establish the well-posedness of the system
\eqref{eq:system8}--\eqref{eq:z-dynamics}. Throughout this section, we assume that
$t_0>0$ and $ \lambda=\alpha-1> 0.$\\

\noindent To obtain the results in this section,  it is convenient to rewrite the second-order system as a first-order
nonautonomous system. Recall the phase variable
\begin{equation}\label{eq:phase-section4}
	y(t)=\lambda\bigl(x(t)-z(t)\bigr)+t\dot{x}(t).
\end{equation}
Differentiating it, we see that \eqref{eq:system7} is equivalent to the first-order
system 
\begin{equation}\label{eq:first-order-section4}
	\left\{
	\begin{aligned}
		\dot{x}(t)
		&=
		\frac{1}{t}
		\left[
		y(t)-\lambda\bigl(x(t)-z(t)\bigr)
		\right],
		\\[1.5mm]
		\dot{y}(t)
		&=
		(1-2a)t\nabla F(x(t))
		-\gamma y(t)
		-\frac{\eta}{t}D_M\bigl(x(t)-z(t)\bigr)y(t),
		\\[1.5mm]
		\dot{z}(t)
		&=
		-\frac{t}{\lambda}(1-a)\nabla F(x(t)).
	\end{aligned}
	\right.
\end{equation}

\begin{definition}\label{DF}
	Let $T>t_0$. A pair $(x,z)$ is called a strong solution of \eqref{eq:system7} on $[t_0,T]$ if
	$$
	x(t)\in C^2([t_0,T];\mathbb R^d),
	\qquad
	z(t)\in C^1([t_0,T];\mathbb R^d),
	$$
	and \eqref{eq:system8}--\eqref{eq:z-dynamics} hold for every $t\in[t_0,T]$. Equivalently, if $y(t)$ is defined by \eqref{eq:phase-section4}, then
	$$
	x(t),y(t),z(t)\in C^1([t_0,T];\mathbb R^d),
	$$
	and $(x(t),y(t),z(t))$ satisfies \eqref{eq:first-order-section4}.
\end{definition}

\subsection{Local Existence and Uniqueness}

\begin{theorem}\label{thm:local-existence-section4}
	Assume that $F\in C^1(\mathbb R^d)$ and that $\nabla F$ is locally Lipschitz on
	$\mathbb R^d$. Let $t_0>0$, $\lambda=\alpha-1>0$, and let
	$a,\gamma,\eta\in\mathbb R$. Then, for every initial condition
	$$
	x(t_0)=x_0,\qquad
	\dot{x}(t_0)=v_0,\qquad
	z(t_0)=z_0,
	$$
	the system \eqref{eq:system8}--\eqref{eq:z-dynamics} admits a unique local strong solution
	in the sense of Definition \ref{DF}.\\
	
	\noindent More precisely, there exist $T>t_0$ and a unique pair
	$$
	x(t)\in C^2([t_0,T];\mathbb R^d),
	\qquad
	z(t)\in C^1([t_0,T];\mathbb R^d),
	$$
	satisfying \eqref{eq:system8}--\eqref{eq:z-dynamics} and the prescribed initial condition.
	Moreover, there exists a maximal time
	$$
	T_{\max}\in(t_0,\infty]
	$$
	such that the solution exists uniquely on every compact interval contained in
	$[t_0,T_{\max})$. If $T_{\max}<\infty$, then
	\begin{equation}\label{eq:blowup-section4}
		\lim_{t\uparrow T_{\max}}
		\left(
		\|x(t)\|+\|\dot{x}(t)\|+\|z(t)\|
		\right)
		=
		\infty.
	\end{equation}
\end{theorem}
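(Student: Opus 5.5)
We will reduce the statement to the Cauchy--Lipschitz (Picard--Lindel\"of) theorem for the first-order system \eqref{eq:first-order-section4}. Set $U(t)=(x(t),y(t),z(t))\in\mathbb R^{3d}$ and write \eqref{eq:first-order-section4} as $\dot U(t)=G(t,U(t))$ with
$$
G(t,x,y,z)=\Bigl(\tfrac1t\bigl[y-\lambda(x-z)\bigr],\ (1-2a)t\nabla F(x)-\gamma y-\tfrac{\eta}{t}D_M(x-z)y,\ -\tfrac{t}{\lambda}(1-a)\nabla F(x)\Bigr).
$$
The initial data $x(t_0)=x_0$, $\dot x(t_0)=v_0$, $z(t_0)=z_0$ translate into $U(t_0)=(x_0,\,t_0v_0+\lambda(x_0-z_0),\,z_0)$. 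Since $t\ge t_0>0$, the coefficients $1/t$, $t$ and $\eta/t$ are continuous and bounded on any compact $[t_0,T]$. The map $(x,z)\mapsto D_M(x-z)$ is polynomial, and hence locally Lipschitz. The product $D_M(x-z)y$ is therefore locally Lipschitz, being a product of locally Lipschitz, locally bounded maps. Together with the local Lipschitz continuity of $\nabla F$, this shows that $G$ is continuous on $[t_0,\infty)\times\mathbb R^{3d}$ and locally Lipschitz in $U$, uniformly for $t$ in compact sets. No sign conditions on $a,\gamma,\eta$ are needed.

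The Picard--Lindel\"of theorem then gives a unique local $C^1$ solution $U$ on some $[t_0,T]$. Standard continuation arguments (Zorn's lemma, or gluing uniquely extended solutions) produce a maximal interval $[t_0,T_{\max})$ on which the solution is unique on every compact subinterval. The usual blow-up alternative also holds: if $T_{\max}<\infty$, then $\|U(t)\|\to\infty$ as $t\uparrow T_{\max}$. For the version with a full limit rather than a limsup, we use the standard fact that $G$ is bounded on bounded sets of $[t_0,T_{\max}]\times\mathbb R^{3d}$. Suppose $\|U\|$ stayed bounded along some sequence $t_n\uparrow T_{\max}$. Then a uniform local existence time on a ball would extend the solution past $T_{\max}$, which is a contradiction.

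It remains to translate back to the original variables. From $x,y,z\in C^1$ and $\dot x=\frac1t[y-\lambda(x-z)]$, the right-hand side is $C^1$, so $x\in C^2$. Differentiating the phase identity and substituting $\dot y$ and $\dot z$ recovers \eqref{eq:system8}, exactly as in the derivation of \eqref{eq:first-order-section4}. Conversely, any strong solution $(x,z)$ in the sense of Definition \ref{DF} yields, through \eqref{eq:phase-section4}, a $C^1$ solution of \eqref{eq:first-order-section4} with the same data. Uniqueness therefore transfers from $U$ to $(x,z)$.

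Finally, the blow-up of $\|x\|+\|y\|+\|z\|$ is equivalent to the blow-up of $\|x\|+\|\dot x\|+\|z\|$ on $[t_0,T_{\max})$ with $T_{\max}<\infty$. This follows from $\|y\|\le \lambda(\|x\|+\|z\|)+T_{\max}\|\dot x\|$ and $\|\dot x\|\le t_0^{-1}(\|y\|+\lambda\|x\|+\lambda\|z\|)$, which gives \eqref{eq:blowup-section4}.

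The only step needing care is the equivalence between the two formulations, together with this comparison of norms in the blow-up criterion. Both rely on $t$ staying in a compact subset of $(0,\infty)$, which holds because $t_0>0$ and $T_{\max}<\infty$. Everything else is the standard ODE theory.
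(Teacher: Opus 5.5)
Your proposal is correct and follows the paper's proof essentially step for step. Both use the same reduction to the first-order system in $U=(x,y,z)$, local Lipschitz continuity of the vector field via the polynomial map $u\mapsto D_M(u)$, Cauchy--Lipschitz with continuation, recovery of $x\in C^2$ from $\dot x=\frac1t[y-\lambda(x-z)]$, and the same uniform-existence-time argument along a bounded sequence $t_n\uparrow T_{\max}$ for the full-limit blow-up criterion. Your explicit converse direction (any strong solution yields a solution of the first-order system, so uniqueness transfers) and your two-sided comparison of $\|x\|+\|y\|+\|z\|$ with $\|x\|+\|\dot x\|+\|z\|$ make precise two points the paper leaves implicit.
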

\noindent The proof is postponed to Appendix \ref{app:proofs-section4}.

\subsection{Global Existence and Uniqueness}
We now show that the local solution obtained in Theorem \ref{thm:local-existence-section4} does not blow up in finite time. The next theorem is for the regime $   \frac12<a<1.$

\begin{theorem}\label{thm:global-main-section4}
	Assume that $F\in C^1(\mathbb R^d)$ and that $\nabla F$ is locally Lipschitz on
	$\mathbb R^d$. Assume also that $F$ is convex and that     $\argmin F\neq\emptyset.$
	Let   $  t_0>0,~    \alpha\ge 3,~
	\lambda=\alpha-1,~
	\gamma\ge 0,~
	\eta\ge 0,$
	and assume that $
	\frac12<a<1.$
	Then, for every initial condition
	$$
	x(t_0)=x_0,\qquad
	\dot{x}(t_0)=v_0,\qquad
	z(t_0)=z_0,
	$$
	the system  \eqref{eq:system7} admits a unique global strong solution
	in the sense of Definition \ref{DF}. That is,
	$$
	x(t)\in C^2([t_0,\infty);\mathbb R^d),
	\qquad
	z(t)\in C^1([t_0,\infty);\mathbb R^d).
	$$
	Moreover, 
	$$
	y(t)\in C^1([t_0,\infty);\mathbb R^d).$$
\end{theorem}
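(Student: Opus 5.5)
The plan is a continuation argument. By Theorem~\ref{thm:local-existence-section4} there is a unique maximal solution on $[t_0,T_{\max})$, and uniqueness on every compact subinterval is already part of that statement. So it suffices to show $T_{\max}=\infty$. Assuming $T_{\max}<\infty$, I will show that $\|x(t)\|+\|\dot x(t)\|+\|z(t)\|$ stays bounded on $[t_0,T_{\max})$, which contradicts \eqref{eq:blowup-section4}. The bound will come from a Lyapunov functional built on the first-order form \eqref{eq:first-order-section4}. Fix $x^\star\in\argmin F$ and $F^\star=F(x^\star)$, and set
\[
\mathcal E(t)=(2a-1)\,t^2\bigl(F(x(t))-F^\star\bigr)+\tfrac12\|y(t)\|^2+\tfrac{c}{2}\|z(t)-x^\star\|^2,
\qquad c:=\frac{(2a-1)\lambda^2}{1-a}.
\]
This is well defined and nonnegative precisely because $\tfrac12<a<1$.

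The key step is the computation of $\dot{\mathcal E}$. From the $y$-equation,
\[
\tfrac{d}{dt}\tfrac12\|y\|^2=(1-2a)t\langle\nabla F(x),y\rangle-\gamma\|y\|^2-\tfrac{\eta}{t}\langle D_M(x-z)y,y\rangle .
\]
Expanding $y=t\dot x+\lambda(x-z)$, the piece $(1-2a)t^2\langle\nabla F,\dot x\rangle$ cancels exactly against the corresponding term in $\frac{d}{dt}\bigl[(2a-1)t^2(F-F^\star)\bigr]$. I then split $x-z=(x-x^\star)-(z-x^\star)$.
\begin{itemize}
\item Convexity gives $\langle\nabla F(x),x-x^\star\rangle\ge F(x)-F^\star$, so the $(x-x^\star)$ piece contributes at most $-(2a-1)\lambda t(F-F^\star)$.
\item The $(z-x^\star)$ piece is $(2a-1)\lambda t\langle\nabla F,z-x^\star\rangle$. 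By the anchor equation, $\frac{c}{2}\frac{d}{dt}\|z-x^\star\|^2=-\frac{c(1-a)}{\lambda}t\langle\nabla F,z-x^\star\rangle$, and the choice of $c$ makes this cancel it exactly.
\end{itemize}
What remains is
\[
\dot{\mathcal E}(t)\le(2a-1)(2-\lambda)\,t\bigl(F(x)-F^\star\bigr)-\gamma\|y\|^2-\tfrac{\eta}{t}\textstyle\sum_i m_i^2(x_i-z_i)^2y_i^2\le 0,
\]
using $\lambda=\alpha-1\ge2$, $\gamma,\eta\ge0$ and $D_M\succeq0$. I expect this bookkeeping to be the main obstacle: in particular, choosing the weights $2a-1$ and $c$ so that both cross terms cancel rather than merely being estimated. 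The role of the hypotheses $\alpha\ge3$ and $\tfrac12<a<1$ is exactly here.

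Since $\mathcal E$ is nonincreasing, $\mathcal E(t)\le\mathcal E(t_0)$ on $[t_0,T_{\max})$, and therefore $y$ and $z$ are bounded there. For $x$, I use the first equation of \eqref{eq:first-order-section4} in the form
\[
\frac{d}{dt}\bigl(t^\lambda x(t)\bigr)=t^{\lambda-1}\bigl(y(t)+\lambda z(t)\bigr).
\]
Integrating over the bounded interval $[t_0,T_{\max})$ bounds $x$. Then $\dot x=\frac1t\bigl(y-\lambda(x-z)\bigr)$ is bounded as well, since $t\ge t_0>0$. This contradicts \eqref{eq:blowup-section4}, so $T_{\max}=\infty$.

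Finally, the regularity $x\in C^2$, $z\in C^1$ on $[t_0,\infty)$ follows from the local theory on each compact interval. The statement $y\in C^1$ follows from Definition~\ref{DF}: $y=\lambda(x-z)+t\dot x$ with $x\in C^2$ and $z\in C^1$.
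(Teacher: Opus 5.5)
Your proposal is correct and follows essentially the same route as the paper: the Lyapunov functional $E_{a,x^\star}$ of Proposition~\ref{prop:accelerated-lyapunov-a-section5}(i) (your $c/2$ is exactly $\lambda^2(2a-1)/(2(1-a))$) bounds $y$ and $z$, after which $x$ and $\dot x$ are bounded and the blow-up alternative gives $T_{\max}=\infty$. The only difference is in the $x$-bound: you use the identity $\frac{d}{dt}\bigl(t^\lambda x(t)\bigr)=t^{\lambda-1}\bigl(y(t)+\lambda z(t)\bigr)$, whereas the paper runs a barrier argument on $h(t)=\|x(t)-x^\star\|^2$ via $\dot h\le\frac{2\lambda}{t}\sqrt h\,(R-\sqrt h)$. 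Your version is a little more direct and also gives a time-uniform bound, so the assumption $T_{\max}<\infty$ is not actually needed at that step.
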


\noindent The proof is postponed to Appendix \ref{app:proofs-section4}.

\hfill

\begin{remark}
	Next, we consider the endpoint, $a=1$. In this case, the anchor equation \eqref{eq:z-dynamics} becomes $    \dot{z}(t)=0.$
	Hence $z(t)=z_0$ for every $t\ge t_0$. This endpoint is therefore a fixed-anchor system,
	rather than an endogenous-anchor system.
\end{remark}

\begin{proposition}\label{prop:a-one-section4}
	Assume that $F\in C^1(\mathbb R^d)$, that $\nabla F$ is locally Lipschitz on
	$\mathbb R^d$, and that $F$ is bounded from below. Let $ t_0>0,~
	\alpha>1,~
	\lambda=\alpha-1>0,~
	\gamma\ge 0,~
	\eta\ge 0.$
	Let $a=1$. Then, for every initial condition
	$$
	x(t_0)=x_0,\qquad
	\dot{x}(t_0)=v_0,\qquad
	z(t_0)=z_0,
	$$
	the system \eqref{eq:system8}--\eqref{eq:z-dynamics} admits a unique global strong solution
	in the sense of Definition \ref{DF}.
\end{proposition}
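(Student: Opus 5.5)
With $a=1$ the anchor equation \eqref{eq:z-dynamics} gives $z(t)\equiv z_0$. The plan is to combine the local theory of Theorem~\ref{thm:local-existence-section4} with an a priori energy bound that excludes the blow-up alternative \eqref{eq:blowup-section4}. Theorem~\ref{thm:local-existence-section4} applies directly, since it allows arbitrary real $a,\gamma,\eta$. It yields a unique maximal solution on $[t_0,T_{\max})$, and it remains to show $T_{\max}=\infty$.

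Suppose, for contradiction, that $T_{\max}<\infty$. Write $u(t)=x(t)-z_0$, so that \eqref{eq:system8} with $a=1$ reads
$$
\ddot x+\Bigl(\frac{\alpha}{t}+\gamma\Bigr)\dot x+\frac{\gamma\lambda}{t}u+\frac{\eta}{t}D_M(u)\dot x+\frac{\eta\lambda}{t^2}D_M(u)u+\nabla F(x)=0 .
$$
Since $\dot u=\dot x$, we introduce the energy
$$
E(t)=\frac12\|\dot x(t)\|^2+F(x(t))-\inf F+\frac{\gamma\lambda}{2t}\|u(t)\|^2+\frac{\eta\lambda}{4t^2}\sum_{i=1}^d m_i^2u_i(t)^4 .
$$
The term $\frac{\eta\lambda}{t^2}D_M(u)u$ is the gradient in $x$ of $\frac{\eta\lambda}{4t^2}\sum_i m_i^2u_i^4$. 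Taking the inner product of the equation with $\dot x$ therefore gives
$$
\dot E=-\Bigl(\frac{\alpha}{t}+\gamma\Bigr)\|\dot x\|^2-\frac{\eta}{t}\langle D_M(u)\dot x,\dot x\rangle-\frac{\gamma\lambda}{2t^2}\|u\|^2-\frac{\eta\lambda}{2t^3}\sum_i m_i^2u_i^4\le 0 .
$$
Here the time-derivatives of the coefficients $1/t$ and $1/t^2$ contribute only nonpositive terms, and \eqref{eq:DM-dissipation} shows the $D_M$-damping term is nonpositive. Since $F$ is bounded from below, every term of $E$ is nonnegative. Hence $E(t)\le E(t_0)$ on $[t_0,T_{\max})$, and in particular $\|\dot x(t)\|\le\sqrt{2E(t_0)}$.

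From the velocity bound we get $\|x(t)\|\le\|x_0\|+(T_{\max}-t_0)\sqrt{2E(t_0)}$, which is bounded because $T_{\max}<\infty$. Together with $z\equiv z_0$, this keeps $\|x\|+\|\dot x\|+\|z\|$ bounded on $[t_0,T_{\max})$. That contradicts \eqref{eq:blowup-section4}, so $T_{\max}=\infty$. Uniqueness on every compact interval then follows from Theorem~\ref{thm:local-existence-section4}, and so does the regularity $x\in C^2$, $z\in C^1$.

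The only delicate step is checking that the nonautonomous restoring terms enter $E$ with signs that keep $\dot E\le0$. This holds because their coefficients $\gamma\lambda/t$ and $\eta\lambda/t^2$ are nonnegative and decreasing, so their time-derivatives only add dissipation. Note that convexity of $F$ and $\alpha\ge3$ are not needed here; a lower bound on $F$ suffices.
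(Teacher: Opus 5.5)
Your proposal is correct and follows essentially the same route as the paper: the paper also uses the mechanical energy $\mathcal H$ (which it takes from Proposition~\ref{prop:fixed-anchor-mechanical-section5} rather than re-deriving it as you do) to bound $\|\dot x(t)\|$. It then rules out $T_{\max}<\infty$ via the resulting linear-in-time bound on $\|x(t)\|$, $z\equiv z_0$, and the blow-up alternative of Theorem~\ref{thm:local-existence-section4}.
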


\noindent The proof is postponed to Appendix \ref{app:proofs-section4}.

\begin{remark}
	Lastly, we consider the other critical endpoint $a=\frac{1}{2}$. Thus,  the coefficient $2a-1$ in the Lyapunov functional
	\eqref{eq:Ea-section5} vanishes. Therefore that functional no longer controls
	$z(t)$. Consequently, the proof of Theorem \ref{thm:global-main-section4} does not
	automatically extend to this endpoint. A simple global-existence result is nevertheless
	available under a linear-growth condition on $\nabla F$.
\end{remark}

\begin{proposition}\label{prop:a-half-section4}
	Assume that $F\in C^1(\mathbb R^d)$ and that $\nabla F$ is locally Lipschitz on
	$\mathbb R^d$. Assume moreover that there exist constants $c_0,c_1\ge 0$ such that
	\begin{equation}\label{eq:linear-growth-section4}
		\|\nabla F(\xi)\|
		\le
		c_0+c_1\|\xi\|,
		\qquad
		\xi\in\mathbb R^d.
	\end{equation}
	Let   $ t_0>0,~
	\lambda=\alpha-1>0,~
	\gamma\ge 0,~
	\eta\ge 0,$
	and let $a=1/2$. Then, for every initial condition
	$$
	x(t_0)=x_0,\qquad
	\dot{x}(t_0)=v_0,\qquad
	z(t_0)=z_0,
	$$
	the system \eqref{eq:system8}--\eqref{eq:z-dynamics} admits a unique global strong solution
	in the sense of Definition \ref{DF}.
\end{proposition}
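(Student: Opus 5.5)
By Theorem~\ref{thm:local-existence-section4}, a unique maximal solution exists on $[t_0,T_{\max})$. It therefore suffices to rule out $T_{\max}<\infty$. Suppose, for contradiction, that $T_{\max}<\infty$. We will show that $\|x\|+\|\dot x\|+\|z\|$ stays bounded on $[t_0,T_{\max})$, which contradicts \eqref{eq:blowup-section4}. The key simplification at $a=1/2$ is that the source term in the phase equation vanishes. Indeed, $(1-2a)=0$, so the second equation of \eqref{eq:first-order-section4} reduces to
\[
\dot y(t)=-\Bigl[\gamma I+\tfrac{\eta}{t}D_M\bigl(x(t)-z(t)\bigr)\Bigr]y(t).
\]

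\emph{Step 1: bound on $y$.} Since $\gamma,\eta\ge0$ and $D_M\succeq0$, we get $\frac{d}{dt}\tfrac12\|y(t)\|^2=-\gamma\|y\|^2-\tfrac{\eta}{t}\langle D_M(x-z)y,y\rangle\le0$. Hence $\|y(t)\|\le\|y(t_0)\|=\|\lambda(x_0-z_0)+t_0v_0\|$ for all $t$. (Coordinatewise, each $y_i$ is the initial value times a positive exponential factor, but the energy bound is enough.)

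\emph{Step 2: Gronwall for $(x,z)$.} On $[t_0,T_{\max})$ with $T_{\max}<\infty$, the first and third equations give
\[
\|\dot x\|\le \tfrac{1}{t_0}\bigl(\|y(t_0)\|+\lambda\|x\|+\lambda\|z\|\bigr),\qquad
\|\dot z\|\le \tfrac{T_{\max}}{2\lambda}\bigl(c_0+c_1\|x\|\bigr),
\]
where the second bound uses the linear growth \eqref{eq:linear-growth-section4}. Setting $\phi(t)=\|x(t)\|+\|z(t)\|$, we obtain $\phi(t)\le\phi(t_0)+\int_{t_0}^t(A+B\phi(s))\,ds$ with constants $A,B$ depending on $T_{\max}$. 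Gronwall's inequality then yields $\phi$ bounded on $[t_0,T_{\max})$. Consequently $\|\dot x(t)\|\le\frac1{t_0}(\|y\|+\lambda\phi)$ is bounded as well.

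\emph{Step 3: conclusion.} The quantity $\|x\|+\|\dot x\|+\|z\|$ is bounded on $[t_0,T_{\max})$, which contradicts \eqref{eq:blowup-section4}. Hence $T_{\max}=\infty$. Uniqueness on every compact interval is inherited from Theorem~\ref{thm:local-existence-section4}, and the regularity of Definition~\ref{DF} is preserved.

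The only point needing care is Step 2. There the linear growth hypothesis replaces the Lyapunov control of $z$ that is lost at $a=1/2$. The nonlinear term $\frac{\eta}{t}D_M(x-z)y$ is cubic, but it causes no trouble: it enters only the $y$-equation, where it has a dissipative sign. This is why we work with the first-order system \eqref{eq:first-order-section4} rather than \eqref{eq:system8}.
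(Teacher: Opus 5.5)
Your proposal is correct and follows essentially the same route as the paper. In both, the vanishing source term at $a=\tfrac12$ gives $\|y(t)\|\le\|y(t_0)\|$ through the energy identity. Then, on a putative finite maximal interval, the linear-growth hypothesis and Gronwall's inequality bound $\|x\|+\|z\|$, and hence $\|\dot x\|$, which contradicts the blow-up alternative of Theorem~\ref{thm:local-existence-section4}. Your explicit constants, such as $\|\dot z\|\le \frac{T_{\max}}{2\lambda}(c_0+c_1\|x\|)$, only make precise the generic constant $C$ used in the paper.
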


\noindent The proof is postponed to Appendix \ref{app:proofs-section4}.

\section{Convergence, Selection, and Restoring-Damping Effects}
\label{sec:convergence-selection-damping}

\subsection{Lyapunov monotonicity} \label{subsec:lyapunov-monotonicity-section5}

\begin{proposition}[Accelerated Lyapunov monotonicity]
	\label{prop:accelerated-lyapunov-a-section5}
	
	Assume that $F$ is convex and continuously differentiable. Let $(x,z)$ be a strong solution of \eqref{eq:system7} on $[t_0,T]$, with $T>t_0$.
	
	\item[(i)] Assume that $    \frac12<a<1.$ Fix $x^\star\in\argmin F$.
	Define
	\begin{equation}\label{eq:Ea-section5}
		E_{a,x^\star}(t)
		:=
		(2a-1)t^2\bigl(F(x(t))-F^\star\bigr)
		+
		\frac12\|y(t)\|^2
		+
		\frac{\lambda^2(2a-1)}{2(1-a)}
		\|z(t)-x^\star\|^2.
	\end{equation}
	Then, for every $t\in[t_0,T]$ and for $  \alpha\geq 3,~
	\lambda=\alpha-1,~
	\gamma\geq 0, ~
	\eta\geq 0,$
	\begin{equation}\label{eq:Ea-drop-section5}
		\begin{aligned}
			\dot{E}_a(t)
			&=
			(2a-1)
			\left[
			2t\bigl(F(x(t))-F^\star\bigr)
			-
			\lambda t
			\bigl\langle x(t)-x^\star,\nabla F(x(t))\bigr\rangle
			\right]
			\\
			&\quad
			-
			\gamma\|y(t)\|^2
			-
			\frac{\eta}{t}
			\bigl\langle
			D_M\bigl(x(t)-z(t)\bigr)y(t),
			y(t)
			\bigr\rangle\leq 0.
		\end{aligned}
	\end{equation}
	In particular, $E_{a,x^\star}$ is nonincreasing on $[t_0,T]$.
	
	\item[(ii)] Assume that $ a=\frac12.$ Define
	\begin{equation}\label{eq:Ehalf-section5}
		E_{\frac12}(t)
		:=
		\frac12\|y(t)\|^2.
	\end{equation}
	Then, for every $t\in[t_0,T]$, and for $  
	\gamma\geq 0, ~
	\eta\geq 0,$
	\begin{equation}\label{eq:Ehalf-drop-section5}
		\dot{E}_{\frac12}(t)
		=
		-
		\gamma\|y(t)\|^2
		-
		\frac{\eta}{t}
		\bigl\langle
		D_M\bigl(x(t)-z(t)\bigr)y(t),
		y(t)
		\bigr\rangle
		\leq 0.
	\end{equation}
	In particular, $E_{\frac12}$ is nonincreasing on $[t_0,T]$.
	
	\item[(iii)] Fix $a=1$. Then, the anchor is fixed: $z(t)\equiv z_0.$ Assume further that $z_0=x^\star ~~
	\text{for some }x^\star\in\operatorname{argmin}F.
	$ ($ z(t)\equiv z_0=x^\star$). 
	Define
	\begin{equation}\label{eq:Eone-section5}
		E_{1,x^\star}(t)
		:=
		t^2\bigl(F(x(t))-F^\star\bigr)
		+
		\frac12
		\left\|
		\lambda\bigl(x(t)-x^\star\bigr)+t\dot{x}(t)
		\right\|^2.
	\end{equation}
	Then, for every $t\in[t_0,T]$, and for $  \alpha\geq 3,~
	\lambda=\alpha-1,~
	\gamma\geq 0, ~
	\eta\geq 0,$
	\begin{equation}\label{eq:Eone-drop-section5}
		\begin{aligned}
			\dot{E}_1(t)
			&=
			2t\bigl(F(x(t))-F^\star\bigr)
			-
			\lambda t
			\bigl\langle
			x(t)-x^\star,\nabla F(x(t))
			\bigr\rangle
			-
			\gamma
			\bigl\|
			\lambda(x(t)-x^\star)+t\dot{x}(t)
			\bigr\|^2
			\\
			&\quad
			-
			\frac{\eta}{t}
			\bigl\langle
			D_M(x(t)-x^\star)
			\bigl(\lambda(x(t)-x^\star)+t\dot{x}(t)\bigr),
			\\
			&\qquad\qquad
			\lambda(x(t)-x^\star)+t\dot{x}(t)
			\bigr\rangle
			\leq0.
		\end{aligned}
	\end{equation}
	In particular, $E_{1,x^\star}$ is nonincreasing on $[t_0,T]$.
\end{proposition}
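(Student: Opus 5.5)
The proof is a direct differentiation of each functional along the first-order system \eqref{eq:first-order-section4}. The key identity is the closed phase equation
\[
\dot y(t)=(1-2a)t\nabla F(x(t))-\gamma y(t)-\tfrac{\eta}{t}D_M\bigl(x(t)-z(t)\bigr)y(t),
\]
together with $t\dot x=y-\lambda(x-z)$ and $\dot z=-\tfrac{t}{\lambda}(1-a)\nabla F(x)$. Since $x\in C^2$ and $z\in C^1$, each functional is $C^1$ on $[t_0,T]$, so the chain rule applies pointwise.

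\textbf{Part (i).} I differentiate the three terms of \eqref{eq:Ea-section5} separately.

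The first term gives
\[
(2a-1)\bigl[2t(F-F^\star)+t\langle\nabla F,\,y-\lambda(x-z)\rangle\bigr].
\]

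The second term gives
\[
\langle y,\dot y\rangle=(1-2a)t\langle\nabla F,y\rangle-\gamma\|y\|^2-\tfrac{\eta}{t}\langle D_My,y\rangle.
\]

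The third term gives
\[
\frac{\lambda^2(2a-1)}{1-a}\langle z-x^\star,\dot z\rangle=-(2a-1)\lambda t\langle\nabla F,z-x^\star\rangle.
\]

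Summing, the $\langle\nabla F,y\rangle$ contributions cancel exactly, since $(2a-1)+(1-2a)=0$. The coefficient $\tfrac{\lambda^2(2a-1)}{2(1-a)}$ was chosen precisely so that the remaining terms combine as
\[
-(2a-1)\lambda t\bigl[\langle\nabla F,x-z\rangle+\langle\nabla F,z-x^\star\rangle\bigr]=-(2a-1)\lambda t\langle\nabla F,x-x^\star\rangle.
\]
This yields the identity \eqref{eq:Ea-drop-section5}.

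For the sign, convexity gives $\langle\nabla F(x),x-x^\star\rangle\ge F(x)-F^\star$. Hence the bracket is at most $(2-\lambda)t(F-F^\star)\le0$, because $\alpha\ge3$ means $\lambda\ge2$, and $2a-1>0$. The remaining terms are nonpositive because $\gamma\ge0$, $\eta\ge0$ and $D_M\succeq0$, as noted in \eqref{eq:DM-dissipation}. Therefore $E_{a,x^\star}$ is nonincreasing.

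\textbf{Part (ii).} With $a=\tfrac12$ the forcing term $(1-2a)t\nabla F$ in the phase equation vanishes. So $\tfrac{d}{dt}\tfrac12\|y\|^2=\langle y,\dot y\rangle$ gives \eqref{eq:Ehalf-drop-section5} directly, and the sign follows as before.

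\textbf{Part (iii).} With $a=1$ we have $\dot z=0$, so $z\equiv z_0=x^\star$ and $y=\lambda(x-x^\star)+t\dot x$. The phase equation now reads $\dot y=-t\nabla F-\gamma y-\tfrac{\eta}{t}D_M(x-x^\star)y$.

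Differentiating $t^2(F-F^\star)$ gives $2t(F-F^\star)+t\langle\nabla F,y-\lambda(x-x^\star)\rangle$. Adding $\langle y,\dot y\rangle$, the $t\langle\nabla F,y\rangle$ terms cancel and we obtain \eqref{eq:Eone-drop-section5}. The sign argument is identical to part (i) with $2a-1=1$.

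The only delicate point is keeping track of the cancellation of the $\langle\nabla F,y\rangle$ cross terms and of the $z$-terms. That is a matter of careful bookkeeping rather than a genuine obstacle.
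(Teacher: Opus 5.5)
Your proposal is correct and follows essentially the same route as the paper. You differentiate each term of the functional using the closed phase equation and the anchor equation, let the cross terms cancel, and conclude with convexity, $\lambda\ge 2$, and $D_M\succeq 0$. The only difference is bookkeeping: you eliminate $\langle \nabla F,y\rangle$ by substituting $t\dot x = y-\lambda(x-z)$, whereas the paper substitutes $y=\lambda(x-z)+t\dot x$ and cancels $t^2\langle\nabla F,\dot x\rangle$. Your explicit computation for part (iii) correctly supplies what the paper only describes as ``very similar'' to part (i).
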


\begin{proof}
	\noindent
	\textbf{Proof of (i).}
	Differentiating the first term in
	\eqref{eq:Ea-section5}, we obtain
	$$
	\begin{aligned}
		\frac{d}{dt}
		\left[
		(2a-1)t^2\bigl(F(x(t))-F^\star\bigr)
		\right]
		&=
		(2a-1)
		\left[
		2t\bigl(F(x(t))-F^\star\bigr)
		+
		t^2
		\bigl\langle
		\nabla F(x(t)),\dot{x}(t)
		\bigr\rangle
		\right].
	\end{aligned}
	$$
	Differentiating the second term in
	\eqref{eq:Ea-section5} and using the phase identity \eqref{eq:system7},
	$$
	\begin{aligned}
		\frac{d}{dt}\frac12\|y(t)\|^2
		&=
		\bigl\langle y(t),\dot y(t)\bigr\rangle
		\\
		&=
		(1-2a)t
		\bigl\langle y(t),\nabla F(x(t))\bigr\rangle
		-
		\gamma\|y(t)\|^2
		-
		\frac{\eta}{t}
		\bigl\langle
		D_M\bigl(x(t)-z(t)\bigr)y(t),
		y(t)
		\bigr\rangle.
	\end{aligned}
	$$
	\noindent Since $    y(t)=\lambda\bigl(x(t)-z(t)\bigr)+t\dot{x}(t),$
	we have
	\begin{eqnarray}\label{5.12}
		\begin{aligned}
			\frac{d}{dt}\frac12\|y(t)\|^2
			&=
			-(2a-1)\lambda t
			\bigl\langle x(t)-z(t),\nabla F(x(t))\bigr\rangle-
			(2a-1)t^2
			\bigl\langle
			\dot{x}(t),\nabla F(x(t))
			\bigr\rangle
			\\
			&\quad
			-
			\gamma\|y(t)\|^2
			-
			\frac{\eta}{t}
			\bigl\langle
			D_M\bigl(x(t)-z(t)\bigr)y(t),
			y(t)
			\bigr\rangle.
		\end{aligned}
	\end{eqnarray}
	Differentiating the last term in
	\eqref{eq:Ea-section5}, and using the anchor equation \eqref{eq:z-dynamics}, we have
	$$
	\begin{aligned}
		\frac{d}{dt}
		\left[
		\frac{\lambda^2(2a-1)}{2(1-a)}
		\|z(t)-x^\star\|^2
		\right]
		&=
		\frac{\lambda^2(2a-1)}{1-a}
		\bigl\langle z(t)-x^\star,\dot z(t)\bigr\rangle
		\\
		&=
		-\lambda(2a-1)t
		\bigl\langle
		z(t)-x^\star,\nabla F(x(t))
		\bigr\rangle.
	\end{aligned}
	$$
	Adding these three derivatives gives
	\begin{equation}\label{eq:lyapunov-derivative-exact-section4}
		\begin{aligned}
			\dot{E}_a(t)
			&=
			(2a-1)
			\left[
			2t\bigl(F(x(t))-F^\star\bigr)
			-
			\lambda t
			\bigl\langle x(t)-x^\star,\nabla F(x(t))\bigr\rangle
			\right]
			\\
			&\quad
			-
			\gamma\|y(t)\|^2
			-
			\frac{\eta}{t}
			\bigl\langle
			D_M\bigl(x(t)-z(t)\bigr)y(t),
			y(t)
			\bigr\rangle.
		\end{aligned}
	\end{equation}
	
	 Since $F$ is convex and $x^\star\in\argmin F$, we have
	$$
	\bigl\langle x(t)-x^\star,\nabla F(x(t))\bigr\rangle
	\geq
	F(x(t))-F^\star.
	$$
	Moreover, by \eqref{eq:DM-definition},
	$$
	D_M\bigl(x(t)-z(t)\bigr)\succeq0.
	$$
	Thus
	$$
	\bigl\langle
	D_M\bigl(x(t)-z(t)\bigr)y(t),
	y(t)
	\bigr\rangle
	\geq0.
	$$
	Using these facts in \eqref{eq:lyapunov-derivative-exact-section4}, we obtain
	\begin{equation}\label{eq:lyapunov-derivative-ineq-section4}
		\begin{aligned}
			\dot{E}_a(t)
			&\le
			(2a-1)(2-\lambda)t\bigl(F(x(t))-F^\star\bigr)
			-
			\gamma\|y(t)\|^2.
		\end{aligned}
	\end{equation}
	Since $\lambda=\alpha-1\geq2$, $\gamma\geq0$, $\eta\geq0$, and
	$2a-1>0$, it follows that
	\begin{equation*}
		\dot{E}_a(t)\le 0.
	\end{equation*}
	
	\smallskip
	
	\noindent
	\textbf{Proof of (ii).} 
	When $a=\frac12$, the second term in \eqref{eq:first-order-section4}  becomes
	$$
	\dot{y}(t)
	=
	-\gamma y(t)
	-
	\frac{\eta}{t}D_M(x(t)-z(t))y(t).
	$$
	Therefore,
	$$
	\begin{aligned}
		\dot{E}_{\frac12}(t)
		&=
		\langle y(t),\dot{y}(t)\rangle=
		-\gamma\|y(t)\|^2
		-
		\frac{\eta}{t}
		\bigl\langle
		D_M(x(t)-z(t))y(t),
		y(t)
		\bigr\rangle.
	\end{aligned}
	$$
	Since $D_M(x(t)-z(t))\succeq0$, the right-hand side is nonpositive.
	
	\smallskip
	
	\noindent \textbf{Proof of (iii).} 
	The proof is very similar to the proof of (i).
\end{proof}

\begin{remark}\label{rem:a1-matched-anchor-section5}
	The condition $z(t)\equiv z_0=x^\star$ in Proposition 
	\ref{prop:accelerated-lyapunov-a-section5}(iii) is not needed for well-posedness
	or for dissipativity of the fixed-anchor dynamics. It is needed only to
	recover the classical accelerated Lyapunov functional centered at a
	minimizer. For arbitrary fixed anchor $z_0$, the endpoint $a=1$ admits
	instead the mechanical Lyapunov functional stated in Proposition
	\ref{prop:fixed-anchor-mechanical-section5} below.
\end{remark}

\begin{proposition}[Mechanical Lyapunov monotonicity for arbitrary fixed anchor]
	\label{prop:fixed-anchor-mechanical-section5}
	Assume that $F\in C^1(\mathbb R^d)$ is bounded from below and let
	$    \alpha>1,~
	\lambda=\alpha-1,~
	\gamma\geq0$ and $
	\eta\geq0.$
	Fix $a=1$, and let $(x,z)$ be a strong solution of
	\eqref{eq:system8}--\eqref{eq:z-dynamics} on $[t_0,T]$, with $T>t_0$.
	Then  $z(t)\equiv z_0$
	for some arbitrary fixed anchor $z_0\in\mathbb R^d$.\\
	\noindent Set    $ r(t):=x(t)-z_0,\qquad
	\Phi_M(r)
	:=
	\sum_{i=1}^d m_i^2r_i^4
	=
	\bigl\langle D_M(r)r,r\bigr\rangle,
	\qquad
	F_{\inf}:=\inf_{\xi\in\mathbb R^d}F(\xi).$
	Define the mechanical energy
	\begin{equation}\label{eq:mechanical-energy-section5}
		\mathcal H(t)
		:=
		\frac12\|\dot{x}(t)\|^2
		+
		F(x(t))-F_{\inf}
		+
		\frac{\gamma\lambda}{2t}\|r(t)\|^2
		+
		\frac{\eta\lambda}{4t^2}\Phi_M(r(t)).
	\end{equation}
	Then, for every $t\in[t_0,T]$,
	\begin{equation}\label{eq:mechanical-energy-identity-section5}
		\begin{aligned}
			\dot{\mathcal H}(t)
			&=
			-
			\left(\frac{\alpha}{t}+\gamma\right)
			\|\dot{x}(t)\|^2
			-
			\frac{\eta}{t}
			\bigl\langle
			D_M(r(t))\dot{x}(t),
			\dot{x}(t)
			\bigr\rangle
			-
			\frac{\gamma\lambda}{2t^2}\|r(t)\|^2
			-
			\frac{\eta\lambda}{2t^3}\Phi_M(r(t))\leq 0.
		\end{aligned}
	\end{equation}
	Consequently, $\mathcal H$ is nonincreasing on $[t_0,T]$.
\end{proposition}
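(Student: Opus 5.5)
The plan is a direct energy computation based on the expanded form \eqref{eq:system8}. First, with $a=1$, the anchor equation \eqref{eq:z-dynamics} gives $\dot z\equiv0$, so $z(t)\equiv z_0$ and $\dot r(t)=\dot x(t)$. Then \eqref{eq:system8} becomes
\begin{equation*}
\ddot x(t)=-\Bigl(\frac{\alpha}{t}+\gamma\Bigr)\dot x(t)-\frac{\gamma\lambda}{t}r(t)-\frac{\eta}{t}D_M(r(t))\dot x(t)-\frac{\eta\lambda}{t^2}D_M(r(t))r(t)-\nabla F(x(t)).
\end{equation*}
Since $F_{\inf}$ is finite by assumption, $\mathcal H$ is well defined and nonnegative. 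Because $x\in C^2$ (Definition~\ref{DF}) and $F\in C^1$, $\mathcal H$ is $C^1$ on $[t_0,T]$.

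Next I differentiate $\mathcal H$ term by term.
\begin{itemize}
\item The kinetic term gives $\langle \dot x,\ddot x\rangle$.
\item The potential term gives $\langle\nabla F(x),\dot x\rangle$.
\item The quadratic anchor term gives $\frac{\gamma\lambda}{t}\langle r,\dot x\rangle-\frac{\gamma\lambda}{2t^2}\|r\|^2$.
\item The quartic term needs the only small observation: $\Phi_M(r)=\sum_i m_i^2 r_i^4$ satisfies $\nabla\Phi_M(r)=4\,(m_i^2r_i^3)_i=4D_M(r)r$. Hence $\frac{d}{dt}\bigl[\frac{\eta\lambda}{4t^2}\Phi_M(r)\bigr]=\frac{\eta\lambda}{t^2}\langle D_M(r)r,\dot x\rangle-\frac{\eta\lambda}{2t^3}\Phi_M(r)$.
\end{itemize}

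I then substitute the expression for $\ddot x$ into $\langle\dot x,\ddot x\rangle$. The three cross terms cancel exactly against the other derivatives:
\begin{itemize}
\item $-\langle\nabla F,\dot x\rangle$ cancels the potential contribution;
\item $-\frac{\gamma\lambda}{t}\langle r,\dot x\rangle$ cancels the linear restoring contribution;
\item $-\frac{\eta\lambda}{t^2}\langle D_M(r)r,\dot x\rangle$ cancels the cubic restoring contribution, which is where the coefficient $\frac{1}{4}$ in $\mathcal H$ is used.
\end{itemize}
What remains is precisely the right-hand side of \eqref{eq:mechanical-energy-identity-section5}.

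Finally, each remaining term is nonpositive:
\begin{itemize}
\item $\alpha/t+\gamma>0$, since $\alpha>1$, $t\ge t_0>0$ and $\gamma\ge0$;
\item $\langle D_M(r)\dot x,\dot x\rangle\ge0$ by \eqref{eq:DM-dissipation};
\item $\gamma\lambda\ge0$ and $\eta\lambda\ge0$;
\item $\Phi_M\ge0$.
\end{itemize}
So $\dot{\mathcal H}\le0$, and monotonicity follows. There is no real obstacle here. The only points requiring care are the factor $4$ in $\nabla\Phi_M$ and the signs of the explicit time-derivatives of the weights $\frac{1}{2t}$ and $\frac{1}{4t^2}$, which produce the extra dissipative terms rather than spoiling the sign.
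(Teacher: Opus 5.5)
Your proposal is correct and follows the paper's own proof: you rewrite \eqref{eq:system8} with $a=1$ and $r=x-z_0$, differentiate the four terms of $\mathcal H$ using $\nabla\Phi_M(r)=4D_M(r)r$, and substitute $\ddot x$ so that the three cross terms cancel. The remaining terms are exactly the right-hand side of \eqref{eq:mechanical-energy-identity-section5}, and each is nonpositive, as you check.
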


\begin{proof}
	Since $a=1$, the anchor equation \eqref{eq:z-dynamics} gives $z(t)\equiv z_0.$  Writing
	$$
	r(t)=x(t)-z_0,
	$$
	the second-order equation \eqref{eq:system8} becomes
	\begin{equation}\label{eq:a1-fixed-anchor-system-section5}
		\begin{aligned}
			\ddot{x}(t)
			&+
			\left(\frac{\alpha}{t}+\gamma\right)\dot{x}(t)
			+
			\frac{\gamma\lambda}{t}r(t)
			+
			\frac{\eta}{t}D_M(r(t))\dot{x}(t)+
			\frac{\eta\lambda}{t^2}D_M(r(t))r(t)
			+
			\nabla F(x(t))
			=
			0.
		\end{aligned}
	\end{equation}
	We differentiate each term in \eqref{eq:mechanical-energy-section5}. First,
	$$
	\frac{d}{dt}
	\left[
	\frac12\|\dot{x}(t)\|^2
	+
	F(x(t))-F_{\inf}
	\right]
	=
	\bigl\langle \dot{x}(t),\ddot{x}(t)\bigr\rangle
	+
	\bigl\langle \nabla F(x(t)),\dot{x}(t)\bigr\rangle.
	$$
	Next,
	$$
	\begin{aligned}
		\frac{d}{dt}
		\left[
		\frac{\gamma\lambda}{2t}\|r(t)\|^2
		\right]
		&=
		\frac{\gamma\lambda}{t}
		\bigl\langle r(t),\dot{x}(t)\bigr\rangle
		-
		\frac{\gamma\lambda}{2t^2}\|r(t)\|^2.
	\end{aligned}
	$$
	Since $    \nabla\Phi_M(r)=4D_M(r)r,$
	we have
	$$
	\frac{d}{dt}\Phi_M(r(t))
	=
	4
	\bigl\langle
	D_M(r(t))r(t),
	\dot{x}(t)
	\bigr\rangle.
	$$
	Therefore,
	$$
	\begin{aligned}
		\frac{d}{dt}
		\left[
		\frac{\eta\lambda}{4t^2}\Phi_M(r(t))
		\right]
		&=
		\frac{\eta\lambda}{t^2}
		\bigl\langle
		D_M(r(t))r(t),
		\dot{x}(t)
		\bigr\rangle
		-
		\frac{\eta\lambda}{2t^3}\Phi_M(r(t)).
	\end{aligned}
	$$
	\noindent Using \eqref{eq:a1-fixed-anchor-system-section5}, we get
	$$
	\begin{aligned}
		\bigl\langle \dot{x}(t),\ddot{x}(t)\bigr\rangle
		&=
		-
		\left(\frac{\alpha}{t}+\gamma\right)
		\|\dot{x}(t)\|^2
		-
		\frac{\gamma\lambda}{t}
		\bigl\langle r(t),\dot{x}(t)\bigr\rangle
		-
		\frac{\eta}{t}
		\bigl\langle
		D_M(r(t))\dot{x}(t),
		\dot{x}(t)
		\bigr\rangle
		\\
		&\quad
		-
		\frac{\eta\lambda}{t^2}
		\bigl\langle
		D_M(r(t))r(t),
		\dot{x}(t)
		\bigr\rangle
		-
		\bigl\langle
		\nabla F(x(t)),\dot{x}(t)
		\bigr\rangle.
	\end{aligned}
	$$
	Adding the differentiated terms, we get
	$$
	\begin{aligned}
		\dot{\mathcal H}(t)
		&=
		-
		\left(\frac{\alpha}{t}+\gamma\right)
		\|\dot{x}(t)\|^2
		-
		\frac{\eta}{t}
		\bigl\langle
		D_M(r(t))\dot{x}(t),
		\dot{x}(t)
		\bigr\rangle
		-
		\frac{\gamma\lambda}{2t^2}\|r(t)\|^2
		-
		\frac{\eta\lambda}{2t^3}\Phi_M(r(t)).
	\end{aligned}
	$$
	This completes the proof.
\end{proof}

\begin{remark}\label{rem:mechanical-not-rate-section5}
	Proposition \ref{prop:fixed-anchor-mechanical-section5} gives a
	Lyapunov monotonicity statement for arbitrary fixed anchor $z_0$. However,
	it is a mechanical-energy monotonicity result; it shows that the kinetic energy, objective energy, and the anchor-restoring potentials dissipate monotonically. It does not directly yield the
	accelerated objective-value estimate: $   F(x(t))-F^\star=\mathcal O(t^{-2}).$
	Such an estimate will be studied in the next subsection.
\end{remark}

\subsection{Accelerated Objective-value estimates}
\label{subsec:objective-value-estimates-section5}

In this subsection we derive explicit estimates for the decay of the objective-value
residual $    F(x(t))-F^\star,$
along the global strong solutions of \eqref{eq:system8}--\eqref{eq:z-dynamics}.
The estimates follow directly from the Lyapunov inequalities established in
Subsection \ref{subsec:lyapunov-monotonicity-section5}. We emphasize that the nonlinear restoring--damping term
does not deteriorate the accelerated value rate. Rather, it appears with a favorable
sign in the Lyapunov dissipation and therefore strengthens the decay mechanism.

\begin{theorem}
	\label{thm:value-estimate-a-section5}
	Assume that $F\in C^1(\mathbb R^d)$ and that $\nabla F$ is locally Lipschitz on
	$\mathbb R^d$. Assume also that $F$ is convex and that $    \argmin F\neq\emptyset.$
	Suppose that $     \alpha\geq3,
	~
	\lambda=\alpha-1\geq2,
	~
	\gamma\geq0,
	~
	\eta\geq0.$ 
	Fix $    \frac12<a<1,$ and $x^\star\in\argmin F,$
	and let $(x,z)$ be the corresponding global strong solution. 
	Then, for every $t\geq t_0$,
	\begin{equation}
		\label{eq:value-bound-basic-section5}
		F(x(t))-F^\star
		\leq
		\frac{E_{a,x^\star}(t)}{(2a-1)t^2}
		\leq
		\frac{E_{a,x^\star}(t_0)}{(2a-1)t^2}.
	\end{equation}
	Consequently,
	\begin{equation}
		\label{eq:value-rate-O-section5}
		F(x(t))-F^\star
		=
		\mathcal{O}(t^{-2})
		\qquad
		\text{as }t\to\infty.
	\end{equation}
	More explicitly, if $   x_0:=x(t_0), 
	~
	v_0:=\dot{x}(t_0),
	~
	z_0:=z(t_0),$
	and $     y_0:=\lambda(x_0-z_0)+t_0v_0,$
	then
	\begin{equation}
		\label{eq:explicit-value-bound-section5}
		\begin{aligned}
			F(x(t))-F^\star
			\leq
			\frac{1}{t^2}
			\Bigg[
			&t_0^2\bigl(F(x_0)-F^\star\bigr)
			+
			\frac{\|y_0\|^2}{2(2a-1)}
			+
			\frac{\lambda^2}{2(1-a)}
			\|z_0-x^\star\|^2
			\Bigg],
			\qquad t\geq t_0.
		\end{aligned}
	\end{equation}
\end{theorem}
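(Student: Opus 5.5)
The plan is to read the estimate off the Lyapunov functional $E_{a,x^\star}$ from Proposition~\ref{prop:accelerated-lyapunov-a-section5}(i), applied along the global strong solution that Theorem~\ref{thm:global-main-section4} supplies.

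\emph{Global applicability of the monotonicity.} By Theorem~\ref{thm:global-main-section4}, the hypotheses give a unique global strong solution $(x,z)$ on $[t_0,\infty)$. For every $T>t_0$ its restriction to $[t_0,T]$ is a strong solution in the sense of Definition~\ref{DF}. Proposition~\ref{prop:accelerated-lyapunov-a-section5}(i) then shows that $E_{a,x^\star}$ is nonincreasing on $[t_0,T]$. Since $T$ is arbitrary, $E_{a,x^\star}(t)\le E_{a,x^\star}(t_0)$ for all $t\ge t_0$.

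\emph{The value bound.} Because $2a-1>0$ and $1-a>0$, the second and third terms of \eqref{eq:Ea-section5} are nonnegative. Hence
\[
(2a-1)t^2\bigl(F(x(t))-F^\star\bigr)\le E_{a,x^\star}(t).
\]
Dividing by $(2a-1)t^2>0$ gives the first inequality in \eqref{eq:value-bound-basic-section5}, and the monotonicity gives the second. The rate \eqref{eq:value-rate-O-section5} follows at once, since $E_{a,x^\star}(t_0)$ is a finite constant fixed by the data.

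\emph{The explicit constant.} Evaluate $E_{a,x^\star}(t_0)$ using $y(t_0)=\lambda(x_0-z_0)+t_0v_0=y_0$. This gives
\[
E_{a,x^\star}(t_0)=(2a-1)t_0^2\bigl(F(x_0)-F^\star\bigr)+\tfrac12\|y_0\|^2+\frac{\lambda^2(2a-1)}{2(1-a)}\|z_0-x^\star\|^2 .
\]
Dividing by $(2a-1)$ yields exactly the bracket in \eqref{eq:explicit-value-bound-section5}.

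\emph{Main obstacle.} There is no real difficulty here. The only point needing care is that the monotonicity must hold on all of $[t_0,\infty)$, and that rests on global existence from Theorem~\ref{thm:global-main-section4}. The sign conditions $\alpha\ge 3$ and $\tfrac12<a<1$ are already built into Proposition~\ref{prop:accelerated-lyapunov-a-section5}(i).
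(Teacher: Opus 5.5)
Your proposal is correct and follows essentially the same route as the paper: drop the nonnegative phase and anchor terms of $E_{a,x^\star}$, use the monotonicity from Proposition~\ref{prop:accelerated-lyapunov-a-section5}(i), and evaluate $E_{a,x^\star}(t_0)$ with $y(t_0)=y_0$ before dividing by $2a-1$. Your step of applying the monotonicity on every $[t_0,T]$ and then letting $T$ be arbitrary is a small piece of care that the paper leaves implicit.
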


\begin{proof}
	By definition \eqref{eq:Ea-section5}, 
	$$
	F(x(t))-F^\star
	\leq
	\frac{E_{a,x^\star}(t)}{(2a-1)t^2}.
	$$
	By Proposition \ref{prop:accelerated-lyapunov-a-section5} (i), the functional
	$E_{a,x^\star}$ is nonincreasing. Therefore
	$$
	F(x(t))-F^\star
	\leq
	\frac{E_{a,x^\star}(t)}{(2a-1)t^2}
	\leq
	\frac{E_{a,x^\star}(t_0)}{(2a-1)t^2} \qquad
	t\geq t_0.
	$$
	Finally,
	$$
	\begin{aligned}
		E_{a,x^\star}(t_0)
		&=
		(2a-1)t_0^2\bigl(F(x_0)-F^\star\bigr)
		+
		\frac12\|y_0\|^2
		+
		\frac{\lambda^2(2a-1)}{2(1-a)}
		\|z_0-x^\star\|^2.
	\end{aligned}
	$$
	Dividing by $2a-1$ gives the explicit estimate.
\end{proof}

\begin{corollary}[Best constant obtained from the Lyapunov estimate]
	\label{cor:best-constant-section5}
	Under the assumptions of Theorem \ref{thm:value-estimate-a-section5}, define
	$$
	d_0:=\operatorname{dist}(z_0,\operatorname{argmin}F).
	$$
	Then, for every $t\geq t_0$,
	\begin{equation}
		\label{eq:best-constant-bound-section5}
		F(x(t))-F^\star
		\leq
		\frac{C_a}{t^2},
	\end{equation}
	where
	\begin{equation}
		\label{eq:Ca-section5}
		C_a
		:=
		t_0^2\bigl(F(x_0)-F^\star\bigr)
		+
		\frac{\|y_0\|^2}{2(2a-1)}
		+
		\frac{\lambda^2}{2(1-a)}d_0^2.
	\end{equation}
\end{corollary}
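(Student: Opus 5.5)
The plan is to apply the explicit estimate \eqref{eq:explicit-value-bound-section5} of Theorem \ref{thm:value-estimate-a-section5} with a well-chosen minimizer $x^\star$, namely the Euclidean projection of $z_0$ onto $\argmin F$.

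First, $\argmin F$ is nonempty by assumption. Because $F$ is convex and continuous, $\argmin F = \{\xi : F(\xi)\le F^\star\}$ is a closed convex sublevel set. Hence the projection $\bar x := P_{\argmin F}(z_0)$ exists, and
$$
\|z_0-\bar x\| = \operatorname{dist}(z_0,\argmin F) = d_0 .
$$

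Next, observe that Theorem \ref{thm:value-estimate-a-section5} holds for every fixed $x^\star\in\argmin F$. The global solution $(x,z)$ from Theorem \ref{thm:global-main-section4} does not depend on the choice of $x^\star$, since $x^\star$ enters only the Lyapunov functional $E_{a,x^\star}$. Likewise, the quantities $F(x_0)-F^\star$ and $y_0$ on the right-hand side of \eqref{eq:explicit-value-bound-section5} do not depend on $x^\star$. Only the last term $\frac{\lambda^2}{2(1-a)}\|z_0-x^\star\|^2$ does.

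Taking $x^\star=\bar x$ therefore turns that last term into $\frac{\lambda^2}{2(1-a)}d_0^2$, and the resulting bound is exactly \eqref{eq:best-constant-bound-section5} with the constant $C_a$ of \eqref{eq:Ca-section5}.

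As a remark, this choice also explains the word ``best'': minimizing the right-hand side over $x^\star\in\argmin F$ means minimizing $\|z_0-x^\star\|$, whose infimum $d_0$ is attained at $\bar x$.

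No step here is a genuine obstacle. The only points requiring care are that the minimum distance is actually attained, which follows from closedness and convexity of $\argmin F$, and that the trajectory is independent of $x^\star$, so a single $t$-uniform bound results.
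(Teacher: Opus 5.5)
Your proposal is correct and is essentially the paper's argument. The paper minimizes the explicit bound \eqref{eq:explicit-value-bound-section5} over $x^\star\in\operatorname{argmin}F$. Plugging in the projection $P_{\operatorname{argmin}F}(z_0)$, as you do, is the same step, with attainment of $d_0$ stated explicitly. Attainment is not strictly needed, since a bound valid for every $x^\star$ also holds at the infimum.
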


\begin{proof}
	The estimate follows from Theorem \ref{thm:value-estimate-a-section5} by
	minimizing the right-hand side of the explicit bound over all
	$$
	x^\star\in\operatorname{argmin}F.
	$$
	Since $F$ is convex and continuous, the set $\operatorname{argmin}F$ is closed
	and convex. Hence
	$$
	\inf_{x^\star\in\operatorname{argmin}F}
	\|z_0-x^\star\|^2
	=
	\operatorname{dist}(z_0,\operatorname{argmin}F)^2.
	$$
	Therefore,
	$$
	F(x(t))-F^\star
	\leq
	\frac{C_a}{t^2}.
	$$
\end{proof}

\begin{proposition}[Optimization of the explicit Lyapunov bound with respect to $a$]
	\label{prop:optimize-a-section5}
	Fix the initial data $x_0,v_0,z_0$, and fix all parameters except $a$. For each $
	a\in\left(\frac12,1\right),$
	let $(x_a,z_a)$ denote the global strong solution corresponding to this value of
	$a$, and let $C_a$ be the explicit Lyapunov-bound (upper-bound) constant \eqref{eq:Ca-section5}.
	
	\noindent Set $$ Y_0:=\|y_0\| ~~\mbox{and}~~ 
	s:=2a-1\in(0,1).$$
	
	\noindent If
	$$
	Y_0>0
	\qquad
	\text{and}
	\qquad
	d_0>0,
	$$
	then this explicit upper-bound constant has a unique minimizer
	\begin{equation}
		\label{eq:sstar-section5}
		s_\star
		=
		\frac{Y_0}{Y_0+\sqrt2\,\lambda d_0},
	\end{equation}
	or equivalently 
	\begin{equation}
		\label{eq:sstar-section5a}
		a_\star
		=
		\frac{1}{2}\left(1+\frac{Y_0}{Y_0+\sqrt2\,\lambda d_0}\right).
	\end{equation}
	At this value,
	\begin{equation}
		\label{eq:Cstar-section5}
		C_{a_\star}
		=
		t_0^2\bigl(F(x_0)-F^\star\bigr)
		+
		\left(
		\frac{Y_0}{\sqrt2}
		+
		\lambda d_0
		\right)^2.
	\end{equation}
\end{proposition}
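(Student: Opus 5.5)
The plan is to treat $C_a$ in \eqref{eq:Ca-section5} as an explicit function of the single variable $s=2a-1\in(0,1)$. The key observation is that $x_0,v_0,z_0$ are fixed and $y_0=\lambda(x_0-z_0)+t_0v_0$ does not involve $a$. Hence $Y_0$, $d_0$ and the first term $t_0^2(F(x_0)-F^\star)$ are all independent of $a$. Since $1-a=(1-s)/2$, I will rewrite
$$
C_a=t_0^2\bigl(F(x_0)-F^\star\bigr)+g(s),\qquad g(s):=\frac{Y_0^2}{2s}+\frac{\lambda^2 d_0^2}{1-s},
$$
so that minimizing $C_a$ over $a\in(\frac12,1)$ is the same as minimizing $g$ over $s\in(0,1)$.

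Next I establish existence and uniqueness of the minimizer. With $Y_0>0$ and $d_0>0$, both terms of $g$ are strictly convex on $(0,1)$, because $g''(s)=Y_0^2/s^3+2\lambda^2d_0^2/(1-s)^3>0$. Moreover $g(s)\to+\infty$ as $s\downarrow0$ and as $s\uparrow1$. Therefore $g$ has exactly one critical point in $(0,1)$, and that point is its unique minimizer. The critical point equation is
$$
-\frac{Y_0^2}{2s^2}+\frac{\lambda^2d_0^2}{(1-s)^2}=0,
$$
which is equivalent to $Y_0(1-s)=\sqrt2\,\lambda d_0\,s$ after taking positive square roots, since both $s$ and $1-s$ are positive. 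This gives \eqref{eq:sstar-section5}, and $a_\star=(1+s_\star)/2$ gives \eqref{eq:sstar-section5a}.

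Finally I evaluate $g$ at $s_\star$. Set $B:=\sqrt2\,\lambda d_0$, so that $s_\star=Y_0/(Y_0+B)$ and $1-s_\star=B/(Y_0+B)$. Then
$$
g(s_\star)=\frac{Y_0(Y_0+B)}{2}+\frac{B^2}{2}\cdot\frac{Y_0+B}{B}=\frac{(Y_0+B)^2}{2}=\Bigl(\frac{Y_0}{\sqrt2}+\lambda d_0\Bigr)^2,
$$
which yields \eqref{eq:Cstar-section5}. As a cross-check, the same value follows from the Cauchy--Schwarz inequality $\bigl(\frac{p^2}{s}+\frac{q^2}{1-s}\bigr)(s+(1-s))\ge(p+q)^2$ with $p=Y_0/\sqrt2$ and $q=\lambda d_0$. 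Equality there holds exactly when $p/s=q/(1-s)$, which confirms both the optimal value and uniqueness of the minimizer.

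There is no real obstacle here. The only point that needs care is the observation that $Y_0$ and $d_0$ do not depend on $a$, even though the trajectory $(x_a,z_a)$ does. This is what makes the constant a genuine one-variable function of $a$, and it is why the statement calls $C_a$ an optimization of the upper-bound constant rather than of the actual decay.
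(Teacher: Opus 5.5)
Your proposal is correct and follows essentially the same route as the paper: you rewrite $C_a$ as $t_0^2(F(x_0)-F^\star)+\frac{Y_0^2}{2s}+\frac{\lambda^2d_0^2}{1-s}$, solve the critical-point equation, use the positive second derivative for uniqueness, and substitute back to get $\bigl(\frac{Y_0}{\sqrt2}+\lambda d_0\bigr)^2$. Your additions are harmless but not needed. The remark that $Y_0$ and $d_0$ do not depend on $a$ is implicit in the paper. The boundary blow-up and the Cauchy--Schwarz cross-check are redundant, because a critical point of a strictly convex function on an open interval is already its unique global minimizer.
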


\begin{proof}
	Since $    s=2a-1, $
	we have $
	\frac{\lambda^2}{2(1-a)}d_0^2
	=
	\frac{\lambda^2d_0^2}{1-s}.$
	Thus
	$$
	C_a
	=
	t_0^2\bigl(F(x_0)-F^\star\bigr)
	+
	\frac{Y_0^2}{2s}
	+
	\frac{\lambda^2d_0^2}{1-s}.
	$$
	The $a$-dependent part is
	$$
	\psi(s)
	:=
	\frac{Y_0^2}{2s}
	+
	\frac{\lambda^2d_0^2}{1-s},
	\qquad
	s\in(0,1).
	$$
	Differentiating,
	$$
	\psi'(s)
	=
	-\frac{Y_0^2}{2s^2}
	+
	\frac{\lambda^2d_0^2}{(1-s)^2}.
	$$
	The critical-point condition $\psi'(s)=0$ is equivalent to
	$$
	\frac{Y_0}{\sqrt2\,s}
	=
	\frac{\lambda d_0}{1-s}.
	$$
	Solving for $s$ gives
	$$
	s_\star
	=
	\frac{Y_0}{Y_0+\sqrt2\,\lambda d_0}.
	$$
	Moreover,
	$$
	\psi''(s)
	=
	\frac{Y_0^2}{s^3}
	+
	\frac{2\lambda^2d_0^2}{(1-s)^3}
	>
	0.
	$$
	Hence $\psi$ is strictly convex, and $s_\star$ is the unique minimizer.
	Substituting $s_\star$ into $\psi$, we get
	$$
	\min_{s\in(0,1)}\psi(s)
	=
	\left(
	\frac{Y_0}{\sqrt2}
	+
	\lambda d_0
	\right)^2.
	$$
	This proves the result.
\end{proof}

\begin{remark}[Interpretation and limitation of the parameter optimization]
	\label{rem:parameter-optimization-limitation-section5}
	Proposition \ref{prop:optimize-a-section5} shows how the splitting parameter
	$a$ balances the initial phase mismatch and the initial anchor mismatch in the
	explicit Lyapunov upper bound.
	The term $
	\frac{\|y_0\|^2}{2(2a-1)} $
	penalizes choosing $a$ too close to $\frac12$, while the term $
	\frac{\lambda^2}{2(1-a)}
	\operatorname{dist}(z_0,\operatorname{argmin}F)^2$ 
	penalizes choosing $a$ too close to $1$ when the initial anchor is not already
	close to the solution set. Thus, at the level of this Lyapunov estimate, smaller values of $a$ favor memory
	formation through the anchor, whereas larger values of $a$ favor direct inertial
	descent. The value $a_\star$ in \eqref{eq:sstar-section5a} gives the optimal balance for the explicit Lyapunov
	upper bound. This optimization concerns only the upper bound obtained from the Lyapunov estimate
	across the family of $a$-dependent trajectories. It does not necessarily identify
	the parameter value that minimizes the actual objective-value trajectory.
\end{remark}

\begin{remark}[Degenerate cases in the optimization over $a$]
	\label{rem:degenerate-a-optimization-section5}
	The interior optimizer in Proposition \ref{prop:optimize-a-section5} assumes
	$    Y_0>0~ 
	\text{and} ~
	d_0>0.
	$
	
	\noindent If
	$$
	Y_0=0
	\qquad
	\text{and}
	\qquad
	d_0>0,
	$$
	then the infimum of the bound is approached as
	$$
	s\downarrow0,
	\qquad
	a\downarrow\frac12,
	$$
	but it is not attained in the open interval
	$    a\in\left(\frac12,1\right).$
	
	\noindent If
	$$
	Y_0>0
	\qquad
	\text{and}
	\qquad
	d_0=0,
	$$
	then the infimum is approached as
	$$
	s\uparrow1,
	\qquad
	a\uparrow1.
	$$
	In this case $z_0\in\operatorname{argmin}F$, so the limiting endpoint $a=1$
	is compatible with the matched-anchor estimate by choosing $x^\star=z_0$.
	
	\noindent If
	$$
	Y_0=d_0=0,
	$$
	then the constant $C_a$ is independent of $a$ and equals $
	t_0^2\bigl(F(x_0)-F^\star\bigr).$
\end{remark}

\begin{theorem}[Matched-anchor endpoint $a=1$]
	\label{thm:value-estimate-a-one-section5}
	Assume that $F\in C^1(\mathbb R^d)$ and that $\nabla F$ is locally Lipschitz on
	$\mathbb R^d$. Assume also that $F$ is convex and that $    \argmin F\neq\emptyset.$
	Suppose that $     \alpha\geq3,
	~
	\lambda=\alpha-1\geq2,
	~
	\gamma\geq0,
	~
	\eta\geq0.$ 
	Fix $    a=1,$ 
	and let $(x,z)$ be the corresponding global strong solution. 
	Assume further that $
	z(t)\equiv z_0=x^\star
	~
	\text{for some }x^\star\in\operatorname{argmin}F.$  
	Then, for every $t\geq t_0$,
	\begin{equation}
		\label{eq:a-one-value-bound-section5}
		F(x(t))-F^\star
		\leq
		\frac{E_{1,x^\star}(t)}{t^2}
		\leq
		\frac{E_{1,x^\star}(t_0)}{t^2}.
	\end{equation}
	Equivalently,
	\begin{equation}
		\label{eq:a-one-explicit-section5}
		F(x(t))-F^\star
		\leq
		\frac{1}{t^2}
		\left[
		t_0^2\bigl(F(x_0)-F^\star\bigr)
		+
		\frac12
		\bigl\|
		\lambda(x_0-x^\star)+t_0v_0
		\bigr\|^2
		\right].
	\end{equation}
	In particular,
	$$
	F(x(t))-F^\star=\mathcal O(t^{-2})
	\qquad
	\text{as }t\to\infty.
	$$
\end{theorem}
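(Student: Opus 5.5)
The argument mirrors the proof of Theorem \ref{thm:value-estimate-a-section5}, with the functional $E_{1,x^\star}$ of Proposition \ref{prop:accelerated-lyapunov-a-section5}(iii) in place of $E_{a,x^\star}$. The steps are as follows.

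\textbf{Step 1: global solution.} By Proposition \ref{prop:a-one-section4}, a unique global strong solution exists for $a=1$. Its hypotheses hold because $F$ is convex with $\argmin F\neq\emptyset$, so $F$ is bounded below by $F^\star$. Since $\dot z\equiv 0$, we have $z(t)\equiv z_0=x^\star$, and the phase variable is
$$y(t)=\lambda(x(t)-x^\star)+t\dot x(t).$$

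\textbf{Step 2: the dissipation identity.} On every compact interval $[t_0,T]$ I would justify \eqref{eq:Eone-drop-section5} explicitly, rather than citing "very similar". Differentiate $t^2(F(x)-F^\star)$ to get $2t(F-F^\star)+t^2\langle\nabla F,\dot x\rangle$. With $a=1$ the phase equation reads
$$\dot y=-t\nabla F(x)-\gamma y-\frac{\eta}{t}D_M(x-x^\star)y,$$
so
$$\frac{d}{dt}\tfrac12\|y\|^2=-\lambda t\langle x-x^\star,\nabla F\rangle-t^2\langle\dot x,\nabla F\rangle-\gamma\|y\|^2-\frac{\eta}{t}\langle D_M y,y\rangle.$$
The $t^2\langle\nabla F,\dot x\rangle$ terms cancel, which gives \eqref{eq:Eone-drop-section5}.

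\textbf{Step 3: nonpositivity.} Convexity gives $\langle x-x^\star,\nabla F(x)\rangle\ge F(x)-F^\star$. Since $D_M\succeq0$ and $\gamma,\eta\ge0$,
$$\dot E_1\le(2-\lambda)t\,(F-F^\star)\le0$$
because $\lambda\ge2$. Hence $E_{1,x^\star}$ is nonincreasing on every $[t_0,T]$, and therefore on $[t_0,\infty)$.

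\textbf{Step 4: the value bound.} Since $\tfrac12\|y\|^2\ge0$, we get $t^2(F(x(t))-F^\star)\le E_{1,x^\star}(t)\le E_{1,x^\star}(t_0)$, which is \eqref{eq:a-one-value-bound-section5}. Evaluating at $t_0$ with $x(t_0)=x_0$ and $\dot x(t_0)=v_0$ gives
$$E_{1,x^\star}(t_0)=t_0^2(F(x_0)-F^\star)+\tfrac12\|\lambda(x_0-x^\star)+t_0v_0\|^2,$$
which is \eqref{eq:a-one-explicit-section5}. The $\mathcal O(t^{-2})$ conclusion follows at once.

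There is no real obstacle here. The only point needing care is the cancellation in Step 2: it uses the matched anchor $z\equiv x^\star$ and the coefficient $(1-2a)=-1$. This is exactly why the hypothesis $z_0=x^\star$ is required.
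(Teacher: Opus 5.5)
Your proposal is correct and uses the same argument as the paper, which invokes the proof of Theorem~\ref{thm:value-estimate-a-section5} with $E_{1,x^\star}$ and Proposition~\ref{prop:accelerated-lyapunov-a-section5}(iii); your Steps~2--3 simply write out the derivative computation that the paper omits as ``very similar to (i).'' One small correction to your closing remark: the cancellation of the $t^2\langle\nabla F(x),\dot x\rangle$ terms needs only $1-2a=-1$, and the matched anchor $z\equiv x^\star$ is what makes the leftover term $\langle x-x^\star,\nabla F(x)\rangle$, so that the convexity inequality in Step~3 applies.
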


\begin{proof}
	It follows the same argument as the proof of Theorem \ref{thm:value-estimate-a-section5}.
\end{proof}

\begin{proposition}[Integrated objective-value and phase dissipation]
	\label{prop:integrated-value-dissipation-section5}
	Assume the hypotheses of Theorem \ref{thm:value-estimate-a-section5}. Then, for
	every $t\geq t_0$,
	\begin{equation}
		\label{eq:integrated-value-dissipation-section5}
		\begin{aligned}
			&(2a-1)(\lambda-2)
			\int_{t_0}^{t}
			s\bigl(F(x(s))-F^\star\bigr)\,ds
			+
			\gamma
			\int_{t_0}^{t}
			\|y(s)\|^2\,ds
			\\
			&\quad
			+
			\eta
			\int_{t_0}^{t}
			\frac{1}{s}
			\bigl\langle
			D_M(x(s)-z(s))y(s),
			y(s)
			\bigr\rangle
			ds
			\leq
			E_{a,x^\star}(t_0)-E_{a,x^\star}(t)
			\leq
			E_{a,x^\star}(t_0).
		\end{aligned}
	\end{equation}
	In particular, if  $   \lambda>2,$ 
	then
	$$
	\int_{t_0}^{\infty}
	s\bigl(F(x(s))-F^\star\bigr)\,ds
	<
	\infty.
	$$
	Moreover, if $\gamma>0$, then
	$$
	\int_{t_0}^{\infty}\|y(s)\|^2\,ds
	<
	\infty,
	$$
	and if $\eta>0$, then
	$$
	\int_{t_0}^{\infty}
	\frac{1}{s}
	\bigl\langle
	D_M(x(s)-z(s))y(s),
	y(s)
	\bigr\rangle
	ds
	<
	\infty.
	$$
\end{proposition}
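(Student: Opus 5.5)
The plan is to integrate the exact derivative identity \eqref{eq:lyapunov-derivative-exact-section4} from Proposition~\ref{prop:accelerated-lyapunov-a-section5}(i) over $[t_0,t]$. By Theorem~\ref{thm:global-main-section4}, the global strong solution is defined on $[t_0,\infty)$. Moreover $x\in C^2$ and $y,z\in C^1$, so $E_{a,x^\star}$ is $C^1$ on every $[t_0,t]$. Hence the fundamental theorem of calculus applies, and $E_{a,x^\star}(t)-E_{a,x^\star}(t_0)=\int_{t_0}^t \dot E_a(s)\,ds$.

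Next, I will bound the gradient term in the identity pointwise. By convexity, $\langle x(s)-x^\star,\nabla F(x(s))\rangle\ge F(x(s))-F^\star$. Since $2a-1>0$ and $\lambda s>0$, this gives
\[
(2a-1)\bigl[2s(F(x(s))-F^\star)-\lambda s\langle x(s)-x^\star,\nabla F(x(s))\rangle\bigr]\le -(2a-1)(\lambda-2)\,s\bigl(F(x(s))-F^\star\bigr).
\]
Therefore
\[
\dot E_a(s)+(2a-1)(\lambda-2)s(F(x(s))-F^\star)+\gamma\|y(s)\|^2+\frac{\eta}{s}\langle D_M(x(s)-z(s))y(s),y(s)\rangle\le 0 .
\]
Integrating from $t_0$ to $t$ yields the first inequality in \eqref{eq:integrated-value-dissipation-section5}. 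For the second inequality, $E_{a,x^\star}(t)\ge0$, because each of its three terms is nonnegative: $2a-1>0$, $F\ge F^\star$, and $1-a>0$.

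For the consequences, note that all three integrands on the left are nonnegative. This uses $\lambda\ge2$, $F\ge F^\star$, and $D_M\succeq0$. So each of the three terms is individually bounded by $E_{a,x^\star}(t_0)$, uniformly in $t$. When the respective coefficient $(2a-1)(\lambda-2)$, $\gamma$ or $\eta$ is strictly positive, we divide by it. The corresponding integral is then nondecreasing in $t$ and bounded, so it converges as $t\to\infty$ by monotone convergence, and its limit is at most $E_{a,x^\star}(t_0)$ divided by that coefficient.

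No step here is a genuine obstacle. The only point needing care is confirming that the exact identity, and not merely the inequality \eqref{eq:lyapunov-derivative-ineq-section4}, is used. The inequality version drops the $\eta$-term, which is precisely the term we need to keep.
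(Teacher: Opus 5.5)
Your proposal is correct and follows essentially the same route as the paper: it bounds the gradient term in the exact derivative identity of Proposition~\ref{prop:accelerated-lyapunov-a-section5}(i) by convexity while keeping the $\gamma$- and $\eta$-dissipation terms, integrates over $[t_0,t]$, and then uses $E_{a,x^\star}\ge 0$ and the nonnegativity of the integrands to let $t\to\infty$. Your additional care points also match what the paper's argument implicitly uses: the $C^1$ regularity of $E_{a,x^\star}$, and keeping the $\eta$-term that the displayed inequality \eqref{eq:lyapunov-derivative-ineq-section4} discards.
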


\begin{proof}
	By Proposition \ref{prop:accelerated-lyapunov-a-section5},
	$$
	\begin{aligned}
		\dot{E}_a(t)
		&\leq
		-(2a-1)(\lambda-2)t\bigl(F(x(t))-F^\star\bigr)
		-
		\gamma\|y(t)\|^2
		-
		\frac{\eta}{t}
		\bigl\langle
		D_M(x(t)-z(t))y(t),
		y(t)
		\bigr\rangle.
	\end{aligned}
	$$
	Integrating over $[t_0,t]$, we get
	$$
	\begin{aligned}
		E_{a,x^\star}(t)-E_{a,x^\star}(t_0)
		&\leq
		-(2a-1)(\lambda-2)
		\int_{t_0}^{t}
		s\bigl(F(x(s))-F^\star\bigr)\,ds
		\\
		&\quad
		-
		\gamma
		\int_{t_0}^{t}
		\|y(s)\|^2\,ds-
		\eta
		\int_{t_0}^{t}
		\frac{1}{s}
		\bigl\langle
		D_M(x(s)-z(s))y(s),
		y(s)
		\bigr\rangle
		ds.
	\end{aligned}
	$$
	Rearranging gives \eqref{eq:integrated-value-dissipation-section5}. Since
	$E_{a,x^\star}(t)\geq0$, we also have
	$$
	E_{a,x^\star}(t_0)-E_{a,x^\star}(t)
	\leq
	E_{a,x^\star}(t_0).
	$$
	Letting $t\to\infty$ and using the nonnegativity of all three integrands gives
	the stated integrability conclusions. The first conclusion requires $\lambda>2$,
	since otherwise the coefficient $\lambda-2$ vanishes.
\end{proof}

\begin{remark}[Role of the restoring--damping coefficient]
	\label{rem:eta-role-section5}
	The coefficient $\eta$ does not appear in the leading explicit constant in the
	objective-value estimate because that bound is obtained by discarding all
	dissipative terms and using only the initial Lyapunov energy. Its effect is nevertheless visible in the integrated inequality, where the
	restoring--damping mechanism contributes the additional nonnegative dissipation
	term
	$$
	\eta
	\int_{t_0}^{t}
	\frac{1}{s}
	\bigl\langle
	D_M(x(s)-z(s))y(s),
	y(s)
	\bigr\rangle
	ds.
	$$
	Thus the restoring--damping mechanism provides additional dissipation in
	the Lyapunov balance while preserving the accelerated worst-case objective-value
	rate
	$$
	F(x(t))-F^\star=\mathcal O(t^{-2}).
	$$
\end{remark}

\hfill

\begin{remark}[The endpoint $a=\frac12$]
	\label{rem:endpoint-half-section5}
	When $a=\frac12,$
	the coefficient $2a-1$ vanishes. The Lyapunov functional $
	E_{\frac12}(t)
	=
	\frac12\|y(t)\|^2 $
	controls only the phase variable and does not contain the objective-value term
	$
	t^2\bigl(F(x(t))-F^\star\bigr).$
	Therefore the preceding argument does not yield an accelerated objective-value
	estimate at the endpoint $a=\frac12$. Additional assumptions or a different
	Lyapunov functional may be required to obtain a value-rate statement in this
	critical regime. This critical case remains open.
\end{remark}

\subsection{Convergence of Trajectories}
\label{subsec:point-convergence-strict-phase-feedback}

We now prove point convergence of the primal trajectory $x(t)$.    A key point is
that the Lyapunov functional controls the anchor distance
\(\|z(t)-x^\star\|^2\), rather than the primal distance
\(\|x(t)-x^\star\|^2\). {\it We therefore first obtain asymptotic information on
	the anchor \(z(t)\), and then transfer this information to \(x(t)\)  through an anchor-to-primal transfer argument based on the 
	phase equation \eqref{eq:phase-section4}}. 

\begin{lemma}[Boundedness of the trajectories]
	\label{lem:boundedness-trajectories-recentered-phase}
	Assume that $F:\mathbb R^d\to\mathbb R$ is convex and continuously differentiable. Assume that $
	t_0>0, ~\frac12<a<1,~ \alpha\ge3,$ and $
	\lambda=\alpha-1\ge2,~\gamma\ge0,~\eta\ge0.$        
	Let \(x^\star\in\arg\min F\), and let \((x,y,z)\) be a  global strong solution of \eqref{eq:system7}.
	Then $        x(t),~ \dot x(t),~ y(t)$ and $z(t)$ are bounded on \([t_0,\infty)\).
\end{lemma}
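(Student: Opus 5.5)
Boundedness of $y(t)$ and $z(t)$ follows directly from Proposition \ref{prop:accelerated-lyapunov-a-section5}(i). Since $\frac12<a<1$, $\alpha\ge3$, $\gamma,\eta\ge0$, the functional $E_{a,x^\star}$ is nonincreasing on $[t_0,\infty)$, and all three of its terms are nonnegative because $F(x(t))\ge F^\star$. Hence
$$
\tfrac12\|y(t)\|^2\le E_{a,x^\star}(t_0),\qquad \frac{\lambda^2(2a-1)}{2(1-a)}\|z(t)-x^\star\|^2\le E_{a,x^\star}(t_0),
$$
so $\|y(t)\|\le Y:=\sqrt{2E_{a,x^\star}(t_0)}$ and $\|z(t)\|\le \|x^\star\|+Z$ with $Z$ an explicit constant. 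Note that $2a-1>0$ and $1-a>0$ are used here.

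The main step is transferring this to $x$ through the phase relation \eqref{eq:phase-section4}, read as a first-order linear ODE for $x$:
$$
t\dot x(t)+\lambda x(t)=y(t)+\lambda z(t).
$$
Multiplying by $t^{\lambda-1}$ gives $\frac{d}{dt}\bigl(t^\lambda x(t)\bigr)=t^{\lambda-1}\bigl(y(t)+\lambda z(t)\bigr)$. Integrating from $t_0$ to $t$ yields
$$
x(t)=\Bigl(\frac{t_0}{t}\Bigr)^{\lambda}x_0+\frac{1}{t^\lambda}\int_{t_0}^t s^{\lambda-1}\bigl(y(s)+\lambda z(s)\bigr)\,ds .
$$
Using the bounds from the first step together with $\int_{t_0}^t s^{\lambda-1}ds\le t^\lambda/\lambda$, this gives
$$
\|x(t)\|\le \|x_0\|+\frac{Y}{\lambda}+\sup_{s\ge t_0}\|z(s)\|,
$$
which is uniform in $t$. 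Equivalently, one can write the same formula for $x(t)-x^\star$ to get a bound centered at the minimizer.

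Velocity then comes for free from the phase relation: $\dot x(t)=\frac1t\bigl[y(t)-\lambda(x(t)-z(t))\bigr]$. Since $t\ge t_0>0$, we get $\|\dot x(t)\|\le t_0^{-1}\bigl(\|y(t)\|+\lambda\|x(t)\|+\lambda\|z(t)\|\bigr)$, which is bounded. In fact this gives $\|\dot x(t)\|=\mathcal O(1/t)$.

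I do not expect a real obstacle here. The only point needing care is that the Lyapunov functional does not control $x$ directly, and the integrating-factor transfer step resolves this. The lemma's hypotheses do not state local Lipschitz continuity of $\nabla F$, but the existence of the global strong solution is assumed, so Theorem \ref{thm:global-main-section4} is not needed.
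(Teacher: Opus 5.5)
Your proof is correct and follows essentially the paper's route. Lyapunov monotonicity of $E_{a,x^\star}$ bounds $y$ and $z$, the phase relation with integrating factor $t^\lambda$ transfers this to $x$, and $\dot x$ is then recovered from the phase relation. The only difference is that the paper squares: it works with $h(t)=\|x(t)-x^\star\|^2$ via the recentered phase $w=\lambda(x-x^\star)+t\dot x$, drops $t^2\|\dot x\|^2$, and obtains a scalar differential inequality. You instead solve the linear vector ODE for $x$ explicitly by variation of constants, which is equally valid and slightly more direct.
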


\begin{proof}
	Set $        \mu:=2a-1$
	and define the Lyapunov function
	$$
	E_{a,x^\star}(t)
	=
	(2a-1)t^2\bigl(F(x(t))-F^\star\bigr)
	+
	\frac12\|y(t)\|^2
	+
	\frac{\lambda^2(2a-1)}{2(1-a)}
	\|z(t)-x^\star\|^2.
	$$
	By Proposition \ref{prop:accelerated-lyapunov-a-section5}, 
	$$
	E_{a,x^\star}(t)\le E_{a,x^\star}(t_0),
	\qquad t\ge t_0.
	$$
	Consequently,
	$$
	\frac12\|y(t)\|^2
	\le
	E_{a,x^\star}(t_0),
	~ t\ge t_0, ~~\mbox{and}~~
	\frac{\lambda^2\mu}{2(1-a)}
	\|z(t)-x^\star\|^2
	\le
	E_{a,x^\star}(t_0),
	~ t\ge t_0.
	$$
	Hence \(y(t)\) and \(z(t)\) are bounded on \([t_0,\infty)\). 
	
	\noindent  Define the recentered phase variable
	$$
	w(t):=y(t)+\lambda(z(t)-x^\star).
	$$
	Using the phase relation, we obtain
	$$
	\begin{aligned}
		w(t)
		&=
		\lambda(x(t)-z(t))+t\dot x(t)
		+\lambda(z(t)-x^\star)  \\
		&=
		\lambda(x(t)-x^\star)+t\dot x(t).
	\end{aligned}
	$$
	Since \(y(t)\) and \(z(t)-x^\star\) are bounded, \(w(t)\) is bounded. Hence there exists a constant \(K\ge0\) such that
	$$
	\|w(t)\|^2\le K,
	\qquad t\ge t_0.
	$$
	Define
	$$
	h(t):=\|x(t)-x^\star\|^2.
	$$
	We compute
	$$
	\begin{aligned}
		\|w(t)\|^2
		&=
		\lambda^2\|x(t)-x^\star\|^2
		+
		2\lambda t\langle x(t)-x^\star,\dot x(t)\rangle
		+
		t^2\|\dot x(t)\|^2        \\
		&=
		\lambda^2 h(t)+\lambda t\dot h(t)+t^2\|\dot x(t)\|^2.
	\end{aligned}
	$$
	Since \(t^2\|\dot x(t)\|^2\ge0\), it follows that
	$$
	\lambda^2 h(t)+\lambda t\dot h(t)
	\le
	\|w(t)\|^2
	\le K.
	$$
	Dividing by \(\lambda>0\) and \(t\ge t_0>0\), we get
	$$
	\dot h(t)+\frac{\lambda}{t}h(t)
	\le
	\frac{K}{\lambda t}.
	$$
	Multiplying by the integrating factor \(t^\lambda\) and integrating from \(t_0\) to \(t\), we find
	$$
	h(t)
	\le
	\left(\frac{t_0}{t}\right)^\lambda h(t_0)
	+
	\frac{K}{\lambda^2}
	\left(
	1-\left(\frac{t_0}{t}\right)^\lambda
	\right).
	$$
	Therefore
	$$
	h(t)
	\le
	\max\left\{
	h(t_0),
	\frac{K}{\lambda^2}
	\right\},
	\qquad t\ge t_0.
	$$
	Hence \(\|x(t)-x^\star\|\) is bounded on \([t_0,\infty)\), and so \(x(t)\) is bounded on \([t_0,\infty)\).\\
	
	\noindent The boundedness of \(\dot x(t)\) follows from the phase relation: $
	\dot x(t)
	=
	\frac1t
	\bigl(
	y(t)-\lambda(x(t)-z(t))
	\bigr).$
\end{proof}

\begin{theorem}[Point convergence]
	\label{thm:point-convergence-strict-gamma}
	Assume that \(F:\mathbb R^d\to\mathbb R\) is convex and continuously
	differentiable, that \(\nabla F\) is locally Lipschitz, and that
	$        \arg\min F\neq\emptyset.$ Let \((x,z)\) be the global strong solution of \eqref{eq:system7}. Assume further that $        t_0>0,~ \lambda=\alpha-1,~ \alpha\ge3, ~       \frac12<a<1,$ and $\gamma>0,~ \eta\ge0.$
	Then there exists \(x_\infty\in\arg\min F\) such that
	$$
	x(t)\to x_\infty
	\qquad\text{as }t\to\infty .
	$$
\end{theorem}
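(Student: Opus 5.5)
The plan is to apply an Opial-type argument directly to $x$. Two ingredients are needed: cluster points of $x$ are minimizers, and $\langle x(t),x^\star-x^{\star\star}\rangle$ converges for any two minimizers. The second will be transferred from the anchor $z$ to $x$ through the phase relation \eqref{eq:phase-section4}, written as an averaging identity. Throughout, Lemma~\ref{lem:boundedness-trajectories-recentered-phase} gives boundedness of $x,\dot x,y,z$, and Theorem~\ref{thm:value-estimate-a-section5} gives $F(x(t))\to F^\star$. By continuity of $F$, every cluster point of the bounded trajectory $x$ therefore lies in $\arg\min F$.

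\emph{Step 1 (anchor inner products converge).} For each $x^\star\in\arg\min F$, the function $E_{a,x^\star}$ is nonincreasing and nonnegative by Proposition~\ref{prop:accelerated-lyapunov-a-section5}(i), so $\lim_{t\to\infty}E_{a,x^\star}(t)$ exists. Take two minimizers $x^\star,x^{\star\star}$ and set $c:=\frac{\lambda^2(2a-1)}{2(1-a)}$. The terms $(2a-1)t^2(F(x)-F^\star)$ and $\tfrac12\|y\|^2$ do not depend on the chosen minimizer, so they cancel in the difference:
\[
E_{a,x^\star}(t)-E_{a,x^{\star\star}}(t)=c\bigl(\|x^\star\|^2-\|x^{\star\star}\|^2\bigr)-2c\,\langle z(t),x^\star-x^{\star\star}\rangle .
\]
Hence $\ell(x^\star,x^{\star\star}):=\lim_{t\to\infty}\langle z(t),x^\star-x^{\star\star}\rangle$ exists.

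\emph{Step 2 (anchor-to-primal transfer).} The phase relation gives $\frac{d}{dt}\bigl(t^\lambda x(t)\bigr)=t^{\lambda-1}\bigl(\lambda z(t)+y(t)\bigr)$. Integrating,
\[
x(t)=\Bigl(\frac{t_0}{t}\Bigr)^{\lambda}x_0+\frac{\lambda}{t^\lambda}\int_{t_0}^t s^{\lambda-1}z(s)\,ds+\frac{1}{t^\lambda}\int_{t_0}^t s^{\lambda-1}y(s)\,ds .
\]
The kernel $\lambda s^{\lambda-1}t^{-\lambda}$ on $[t_0,t]$ has total mass tending to $1$ and puts vanishing mass on any fixed bounded interval. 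So if a scalar function $g$ converges, so does its weighted average, to the same limit (a Ces\`aro/Toeplitz argument). Applied to $g(s)=\langle z(s),x^\star-x^{\star\star}\rangle$, the middle term paired with $x^\star-x^{\star\star}$ tends to $\ell(x^\star,x^{\star\star})$. The first term tends to $0$.

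For the $y$-term, I use $\gamma>0$. Proposition~\ref{prop:integrated-value-dissipation-section5} gives $\int_{t_0}^\infty\|y\|^2<\infty$. Then Cauchy--Schwarz yields
\[
t^{-\lambda}\int_{t_0}^t s^{\lambda-1}\|y(s)\|\,ds\le t^{-1}\int_{t_0}^t\|y(s)\|\,ds\le t^{-1/2}\|y\|_{L^2}\to0 .
\]
Consequently $\langle x(t),x^\star-x^{\star\star}\rangle\to\ell(x^\star,x^{\star\star})$ for every pair of minimizers.

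\emph{Step 3 (uniqueness of the cluster point).} Since $x$ is bounded, it has at least one cluster point, and every cluster point is a minimizer. Let $x_1,x_2$ be two cluster points, and apply Step 2 with $x^\star=x_1$, $x^{\star\star}=x_2$. Evaluating the limit of $\langle x(t),x_1-x_2\rangle$ along sequences tending to $x_1$ and to $x_2$ gives $\langle x_1,x_1-x_2\rangle=\langle x_2,x_1-x_2\rangle$, so $\|x_1-x_2\|^2=0$. Hence $x(t)\to x_\infty\in\arg\min F$.

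I expect the main obstacle to be that the Lyapunov functional controls $\|z-x^\star\|$ and not $\|x-x^\star\|$. For that reason I avoid proving convergence of $z$ (or $x-z\to0$) directly: $\dot z=-\frac{1-a}{\lambda}t\nabla F(x)$ is only known to be bounded, via $t\|\nabla F(x)\|\le\sqrt{2Lt^2(F(x)-F^\star)}$ with $L$ a Lipschitz constant of $\nabla F$ on a ball containing the trajectory. The delicate points are therefore the Toeplitz averaging in Step 2 and the $L^2$ control of $y$, which is exactly where the hypothesis $\gamma>0$ enters. The limit $\ell$ is not needed explicitly here; identifying it (for instance $x_\infty=P_S(z_0)$ in the affine case) is left to the subsequent selection theorem.
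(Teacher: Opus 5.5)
Your proof is correct. Steps 1 and 3 coincide with the paper's argument: the difference $E_{a,p}-E_{a,q}$ gives convergence of $\langle z(t),q-p\rangle$, and the Opial-type argument gives uniqueness of the cluster point. The genuine difference is in the anchor-to-primal transfer.

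\textbf{The paper's transfer.} The paper works with the projected phase $\chi=\langle y,d\rangle$ and proceeds in four steps:
\begin{enumerate}
\item It combines the $y$-equation with $t\langle\nabla F(x),d\rangle=-\frac{\lambda}{1-a}\dot\zeta$ to obtain $\dot\chi+\gamma\chi=\beta\dot\zeta-\rho$.
\item It proves $\rho\in L^1$ (Claim~1). This needs boundedness of $D_M(x-z)$ in addition to $y\in L^2$.
\item It shows $\chi(t)\to0$ by variation of constants and an integration by parts against the exponential kernel.
\item Only then does it apply a weighted Ces\`aro argument to $\dot\xi+\frac{\lambda}{t}\xi=\frac1t(\chi+\lambda\zeta)$.
\end{enumerate}

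\textbf{Your transfer.} You integrate the phase relation directly as a linear equation for $x$. You observe that the $y$-contribution only has to vanish in weighted average, which follows at once from $\int_{t_0}^\infty\|y\|^2<\infty$ and Cauchy--Schwarz. Your Toeplitz step on the $z$-term is the same weighted Ces\`aro principle the paper uses at the end.

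\textbf{Comparison.} Your route bypasses the $y$-dynamics, the nonlinear $D_M$ term and Claim~1 entirely. It uses $\gamma>0$ only once, whereas the paper uses it twice. Your argument in fact shows
\[
\Bigl\|x(t)-\lambda t^{-\lambda}\int_{t_0}^t s^{\lambda-1}z(s)\,ds\Bigr\|\to0
\]
in norm, not only along directions in $S-S$. The paper's longer route yields the extra pointwise information $\langle y(t),q-p\rangle\to0$, which the theorem does not need. Both arguments give $\lim_{t\to\infty}\langle x(t),q-p\rangle=\lim_{t\to\infty}\langle z(t),q-p\rangle$. This is exactly what the affine selection theorem uses afterwards, so your version would serve there equally well.
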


\begin{proof}
	Set
	$$
	S:=\arg\min F,\qquad
	F^\star:=\min_{\mathbb R^d}F,\qquad
	\mu:=2a-1 .
	$$
	Fix an arbitrary minimizer \(x^\star\in S\). By Theorem \ref{thm:value-estimate-a-section5}, we get        $ F(x(t))\to F^\star.$
	Since \(x(t)\) is bounded (by Lemma \ref{lem:boundedness-trajectories-recentered-phase}), every sequence \(t_n\to\infty\) has a subsequence
	along which \(x(t_n)\) converges.  If \(x(t_n)\to\bar x\), then by continuity of
	\(F\),
	$$
	F(\bar x)
	=
	\lim_{n\to\infty}F(x(t_n))
	=
	F^\star .
	$$
	Thus \(\bar x\in S\).  Hence, every cluster point of \(x(t)\) belongs to \(S\),
	and in particular
	$$
	\operatorname{dist}(x(t),S)\to0 .
	$$
	We next prove convergence of anchor distance differences.  For each \(p\in S\),
	define
	$$
	E_{a,p}(t)
	=
	\mu t^2\bigl(F(x(t))-F^\star\bigr)
	+
	\frac12\|y(t)\|^2
	+
	\frac{\lambda^2\mu}{2(1-a)}
	\|z(t)-p\|^2 .
	$$
	Then, by Proposition \ref{prop:accelerated-lyapunov-a-section5}, \(E_{a,p}(t)\) is nonincreasing and bounded below, and hence
	$$
	\lim_{t\to\infty}E_{a,p}(t)
	$$
	exists and is finite. Fix \(p,q\in S\).  Subtracting the two energies gives
	$$
	E_{a,p}(t)-E_{a,q}(t)
	=
	\frac{\lambda^2\mu}{2(1-a)}
	\left(
	\|z(t)-p\|^2-\|z(t)-q\|^2
	\right).
	$$
	Since both energies have finite
	limits, 
	$$
	\|z(t)-p\|^2-\|z(t)-q\|^2
	$$
	has a finite limit.  Since
	$$
	\|z(t)-p\|^2-\|z(t)-q\|^2
	=
	2\langle z(t),q-p\rangle
	+
	\|p\|^2-\|q\|^2,
	$$
	we obtain that
	$$
	\lim_{t\to\infty}\langle z(t),q-p\rangle
	$$
	exists for every \(p,q\in S\).\\
	
	\noindent {\it We now transfer this convergence from \(z(t)\) to \(x(t)\)}.  Fix \(p,q\in S\)
	and set  $d:=q-p.$
	Define
	$$
	\xi(t):=\langle x(t),d\rangle,
	\qquad
	\zeta(t):=\langle z(t),d\rangle,
	\qquad
	\chi(t):=\langle y(t),d\rangle .
	$$
	Thus, $  \zeta(t)\to\zeta_\infty$
	for some \(\zeta_\infty\in\mathbb R\). From the anchor equation: $
	\dot z(t)
	=
	-\frac{1-a}{\lambda}t\nabla F(x(t)),$
	we get
	$$
	\dot\zeta(t)
	=
	-\frac{1-a}{\lambda}t
	\langle\nabla F(x(t)),d\rangle .
	$$
	Thus
	$$
	t\langle\nabla F(x(t)),d\rangle
	=
	-\frac{\lambda}{1-a}\dot\zeta(t).
	$$
	Taking the inner product of the \(y\)-equation with \(d\) and using \eqref{eq:first-order-section4}, we get
	$\dot{y}(t)
	=
	(1-2a)t\nabla F(x(t))
	-\gamma y(t)
	-\frac{\eta}{t}D_M\bigl(x(t)-z(t)\bigr)y(t)$
	
	$$
	\begin{aligned}
		\dot\chi(t)
		&=
		-\mu t\langle\nabla F(x(t)),d\rangle
		-
		\gamma\chi(t)
		-
		\frac{\eta}{t}
		\left\langle
		D_M(x(t)-z(t))y(t),d
		\right\rangle .
	\end{aligned}
	$$

	Therefore
	$$
	\dot\chi(t)+\gamma\chi(t)
	=
	\beta\dot\zeta(t)
	-
	\rho(t),
	$$
	where
	$$
	\rho(t)
	:=
	\frac{\eta}{t}
	\left\langle
	D_M(x(t)-z(t))y(t),d
	\right\rangle ,~~~ \beta:=\frac{\lambda\mu}{1-a}.
	$$
	{\bf Claim 1:} \(\rho\in L^1(t_0,\infty)\). The proof of this claim is given in Appendix \ref{claim1}. \\
	
	\noindent For \(t\ge T\ge t_0\), variation of constants gives
	$$
	\chi(t)
	=
	e^{-\gamma(t-T)}\chi(T)
	+
	\beta\int_T^t e^{-\gamma(t-s)}\dot\zeta(s)\,ds
	-
	\int_T^t e^{-\gamma(t-s)}\rho(s)\,ds .
	$$
	The first term tends to zero because \(\gamma>0\).  The last term also tends to
	zero, since \(\rho\in L^1(t_0,\infty)\) and convolution with an exponentially
	decaying kernel vanishes at infinity. \noindent It remains to handle the middle term. For this, we employ integration by parts 
	$$
	\int_T^t e^{-\gamma(t-s)}\dot\zeta(s)\,ds
	=
	\zeta(t)
	-
	e^{-\gamma(t-T)}\zeta(T)
	-
	\gamma\int_T^t e^{-\gamma(t-s)}\zeta(s)\,ds .
	$$
	Since \(\zeta(t)\to\zeta_\infty\), we have
	$$
	e^{-\gamma(t-T)}\zeta(T)\to0~~\mbox{and}~~
	\gamma\int_T^t e^{-\gamma(t-s)}\zeta(s)\,ds
	\to
	\zeta_\infty .
	$$
	Therefore
	$$
	\int_T^t e^{-\gamma(t-s)}\dot\zeta(s)\,ds
	\to
	\zeta_\infty-0-\zeta_\infty
	=
	0 .
	$$
	Consequently,
	$$
	\chi(t)\to0 .
	$$
	That is,
	$$
	\langle y(t),q-p\rangle\to0
	\qquad\text{for every }p,q\in S.\\
	$$
	
	\noindent   Taking the inner product of $        \dot x(t)
	=
	\frac1t\bigl(y(t)-\lambda(x(t)-z(t))\bigr)$
	with \(d=q-p\), we obtain
	\begin{eqnarray}\label{xi}
		\dot\xi(t)+\frac{\lambda}{t}\xi(t)
		=
		\frac1t\bigl(\chi(t)+\lambda\zeta(t)\bigr).
	\end{eqnarray}
	Solving the linear equation \eqref{xi} for \(\xi(t)\), and for \(t\ge T\), gives
	$$
	\xi(t)
	=
	\left(\frac{T}{t}\right)^\lambda\xi(T)
	+
	\frac1{t^\lambda}
	\int_T^t
	s^{\lambda-1}
	\bigl(\chi(s)+\lambda\zeta(s)\bigr)\,ds .
	$$
	The first term tends to zero. Also, since $        \chi(t)\to0
	~~\text{and}~~
	\zeta(t)\to\zeta_\infty,$
	we have $        \chi(t)+\lambda\zeta(t)\to\lambda\zeta_\infty .
	$ Thus, by the weighted Cesaro principle, we obtain
	$$
	\frac1{t^\lambda}
	\int_T^t
	s^{\lambda-1}
	\bigl(\chi(s)+\lambda\zeta(s)\bigr)\,ds
	\to
	\zeta_\infty .
	$$
	Hence
	$$
	\xi(t)\to\zeta_\infty .
	$$
	Therefore, for every \(p,q\in S\),
	$$
	\lim_{t\to\infty}\langle x(t),q-p\rangle
	$$
	exists. Now, since
	\(x(t)\) is bounded, it has at least one cluster point.  Suppose that \(u\) and
	\(v\) are two cluster points of \(x(t)\). Then, from the first part of the proof, $  u,v\in S.$
	Choose sequences \(t_n\to\infty\) and \(s_n\to\infty\) such that $
	x(t_n)\to u,$ and $
	x(s_n)\to v .$
	Set        $d:=v-u.$
	Since \(u,v\in S\), the preceding conclusion applies with \(p=u\) and
	\(q=v\).  Hence $
	\langle x(t),v-u\rangle$
	has a finite limit as \(t\to\infty\).  
	Passing to the limit along \(t_n\) gives
	$$
	\lim_{n\to\infty}\langle x(t_n),v-u\rangle
	=
	\langle u,v-u\rangle .
	$$
	Passing to the limit along \(s_n\) gives
	$$
	\lim_{n\to\infty}\langle x(s_n),v-u\rangle
	=
	\langle v,v-u\rangle .
	$$
	Since the scalar function has a unique limit, these two values are equal:
	$$
	\langle u,v-u\rangle
	=
	\langle v,v-u\rangle .
	$$
	Therefore, \(u=v\). Thus, \(x(t)\) has exactly one cluster point,
	say \(x_\infty\), and so
	$$
	x(t)\to x_\infty \in S=\arg\min F .
	$$
\end{proof}

\begin{remark}
	The assumption $\gamma>0$ is used both to obtain $\rho\in L^1(t_0,\infty)$ (Appendix \ref{claim1}) and in the exponential-kernel argument proving $\chi(t)\to0$.
\end{remark}

\subsection{Anchor-Induced Selection on Affine Solution Sets}\label{Selection}

The preceding theorem proves that the primal trajectory converges strongly to a minimizer of $F$. We now show that, when the solution set is affine, the endogenous anchor does more than guarantee convergence: it selects a distinguished minimizer determined by the initial anchor $z_0$. This selection property is one of the main structural benefits of the anchor-memory mechanism. In contrast with the fixed-anchor acceleration \cite{SPR}, classical Nesterov-type dynamics \cite{SuBoydCandes16}, and Tikhonov-regularized inertial systems \cite{ACR,AttouchLaszlo24}, where the limiting minimizer is generally not explicitly controlled by an {\it internal anchor state}, the present dynamics (system \eqref{eq:system7}) preserves the affine component of the anchor and transfers this preserved information to the primal trajectory.\\

\noindent Throughout this subsection, we assume that $S$ is a nonempty affine subspace of $\mathbb R^d$. Thus there exist $p_0\in S$ and a linear subspace $L\subset\mathbb R^d$ such that
$$
S=p_0+L.
$$
We denote by $P_S$ the Euclidean projection onto $S$.

\begin{lemma}
	\label{lem:gradient-orthogonal-affine-solution-directions}
	Assume that $F:\mathbb R^d\to\mathbb R$ is convex and continuously differentiable, and that
	$
	S=\arg\min F=p_0+L
	$
	is a nonempty affine subspace. Then, for every $x\in\mathbb R^d$ and every $v\in L$,
	$$
	\langle \nabla F(x),v\rangle=0.
	$$
\end{lemma}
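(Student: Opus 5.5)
The idea is to exploit the fact that $F$ is constant along every line in a direction of $L$ through an arbitrary point. This holds even far from $S$, and it follows from convexity together with the structure of $S$.

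\textbf{Step 1: $F$ is nonincreasing along $L$ from any point.} Fix $x\in\mathbb R^d$, $v\in L$, and $p_0\in S$. For each $s\in\mathbb R$ and $\theta\in(0,1]$, write
$$
x+sv=(1-\theta)x+\theta\Bigl(p_0+\tfrac{s}{\theta}v\Bigr)+\theta(x-p_0)\cdot 0 .
$$
This is not quite right, so we instead use the convex combination
$$
(1-\theta)x+\theta\Bigl(p_0+\tfrac{s}{\theta}v\Bigr)=x+sv+\theta(p_0-x).
$$
The point $p_0+\tfrac{s}{\theta}v$ lies in $S$. Convexity then gives
$$
F\bigl(x+sv+\theta(p_0-x)\bigr)\le(1-\theta)F(x)+\theta F^\star .
$$
Letting $\theta\downarrow0$ and using continuity of $F$, we obtain $F(x+sv)\le F(x)$ for every $s\in\mathbb R$.

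\textbf{Step 2: $F$ is constant along $L$.} Apply Step 1 at the point $x+sv$ with direction $-sv\in L$. This yields $F(x)\le F(x+sv)$. Hence $g(s):=F(x+sv)$ is constant on $\mathbb R$.

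\textbf{Step 3: conclusion.} Since $F\in C^1$, we have $g'(0)=\langle\nabla F(x),v\rangle$. Because $g$ is constant, $g'(0)=0$, which is the claim.

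The only delicate point is Step 1. The limiting argument requires the points $p_0+\tfrac{s}{\theta}v$ to remain in $S$ as $\theta\to0$ even though their norms blow up. This is exactly where the assumption that $S$ is a full affine subspace, not merely a convex set, enters: $S$ contains the entire line $p_0+\mathbb R v$. The bound $(1-\theta)F(x)+\theta F^\star$ is independent of how large $s/\theta$ is, so the passage to the limit is legitimate.
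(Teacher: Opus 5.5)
Your argument is correct, but it takes a different route from the paper. The paper works directly with the gradient. It applies the first-order convexity inequality at $x$ against the points $p_0+sv\in S$, which gives
\[
s\langle\nabla F(x),v\rangle\le F^\star-F(x)-\langle\nabla F(x),p_0-x\rangle\qquad\text{for all }s\in\mathbb R,
\]
and a linear function of $s$ that is bounded above on all of $\mathbb R$ must have zero slope. You instead use only the zeroth-order (Jensen) form of convexity together with continuity. Via the convex combination $(1-\theta)x+\theta\bigl(p_0+\tfrac{s}{\theta}v\bigr)=x+sv+\theta(p_0-x)$ and the limit $\theta\downarrow0$, you show that $F$ is invariant under translations by $L$ (equivalently, $\pm v$ are recession directions of every sublevel set of $F$), and only then differentiate.

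The paper's proof is a one-liner that uses differentiability from the start. Yours establishes the stronger intermediate fact $F(x+v)=F(x)$ for all $x\in\mathbb R^d$ and $v\in L$, which needs no differentiability at all. The paper's inequality, however, also extends verbatim to subgradients, so both routes cover the nonsmooth case.

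Two small remarks. First, the initial displayed identity in Step 1, which you yourself flag as wrong, should simply be deleted. Second, Step 2 is not needed for the conclusion: Step 1 already shows that $s=0$ is a global maximizer of the differentiable function $g(s)=F(x+sv)$, whence $g'(0)=\langle\nabla F(x),v\rangle=0$.
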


\begin{proof}
	Let $x\in\mathbb R^d$ and $v\in L$. Since $S=p_0+L$, we have
	$$
	p_0+sv\in S
	\qquad\text{for every }s\in\mathbb R.
	$$
	Because $F$ is convex and differentiable, the first-order convexity inequality gives
	$$
	F(p_0+sv)
	\ge
	F(x)+\langle \nabla F(x),p_0+sv-x\rangle .
	$$
	Since every point of $S$ is a minimizer, 
	$$
	F^\star
	\ge
	F(x)+\langle \nabla F(x),p_0-x\rangle
	+
	s\langle \nabla F(x),v\rangle .
	$$
	Equivalently,
	$$
	s\langle \nabla F(x),v\rangle
	\le
	F^\star-F(x)-\langle \nabla F(x),p_0-x\rangle .
	$$
	The right-hand side is independent of $s$, while $s\in\mathbb R$ is arbitrary. Therefore the coefficient of $s$ must vanish. Thus
	$$
	\langle \nabla F(x),v\rangle=0.
	$$
\end{proof}

\begin{theorem}[Anchor-induced projection selection]
	\label{thm:anchor-induced-projection-selection}
	Assume that $F:\mathbb R^d\to\mathbb R$ is convex and continuously differentiable, that $\nabla F$ is locally Lipschitz, and that
	$
	S:=\arg\min F
	$
	is a nonempty affine subspace of $\mathbb R^d$. Let $(x,y,z)$ be the global strong solution of \eqref{eq:system7}
	with parameters
	$ t_0>0,~
	\alpha\ge 3, ~
	\lambda=\alpha-1,~
	\frac12<a<1,~
	\gamma>0,~
	\eta\ge0.
	$
	Then the primal trajectory converges to the projection of the initial anchor onto the solution set:
	$$
	x(t)\longrightarrow P_S(z_0)
	\qquad\text{as }t\to\infty.
	$$
\end{theorem}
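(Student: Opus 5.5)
By Theorem~\ref{thm:point-convergence-strict-gamma}, $x(t)\to x_\infty$ for some $x_\infty\in S=p_0+L$. It therefore suffices to show that $x_\infty-z_0\perp L$. Indeed, a point $s\in S$ equals $P_S(z_0)$ exactly when $z_0-s\in L^\perp$. So the plan is to show that the $L$-component of every relevant state is conserved or inherited from $z_0$.

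\emph{Step 1: the anchor's $L$-component is frozen.} Fix $v\in L$. By the anchor equation \eqref{eq:z-dynamics} and Lemma~\ref{lem:gradient-orthogonal-affine-solution-directions},
$$
\frac{d}{dt}\langle z(t),v\rangle=-\frac{1-a}{\lambda}\,t\,\langle\nabla F(x(t)),v\rangle=0 .
$$
Hence $\langle z(t),v\rangle=\langle z_0,v\rangle$ for all $t\ge t_0$. In the notation of the proof of Theorem~\ref{thm:point-convergence-strict-gamma}, this gives $\zeta(t)\equiv\langle z_0,v\rangle$ for $d=v$.

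\emph{Step 2: transfer to $x$.} Every $v\in L$ has the form $q-p$ with $p,q\in S$; take $p=p_0$ and $q=p_0+v$. The transfer argument in the proof of Theorem~\ref{thm:point-convergence-strict-gamma} then applies with $d=v$. It shows $\chi(t)=\langle y(t),v\rangle\to0$, using Claim~1 and $\gamma>0$. It then solves \eqref{xi} and applies the weighted Ces\`aro principle to get $\xi(t)=\langle x(t),v\rangle\to\zeta_\infty$. By Step~1, $\zeta_\infty=\langle z_0,v\rangle$. Since also $x(t)\to x_\infty$, we obtain $\langle x_\infty,v\rangle=\langle z_0,v\rangle$ for every $v\in L$, i.e.\ $x_\infty-z_0\in L^\perp$.

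\emph{Step 3: identification.} We have $x_\infty\in S$ and $z_0-x_\infty\in L^\perp$. By the characterization of the projection onto the affine set $p_0+L$,
$$
\|z_0-s\|^2=\|z_0-x_\infty\|^2+\|x_\infty-s\|^2\quad\text{for } s\in S ,
$$
which follows from Pythagoras because $x_\infty-s\in L$. Therefore $x_\infty=P_S(z_0)$.

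The only delicate point is that Step~2 reuses the intermediate conclusion $\langle x(t),d\rangle\to\lim\zeta$ from inside the proof of Theorem~\ref{thm:point-convergence-strict-gamma}, not just its statement. I would restate that conclusion explicitly: for $d\in L$, $\lim\langle x(t),d\rangle=\lim\langle z(t),d\rangle$. That proof already establishes $\xi(t)\to\zeta_\infty$, with $\rho\in L^1$ supplied by Claim~1, so no new estimate is needed. The work lies in noticing that $\zeta$ is constant here rather than merely convergent.
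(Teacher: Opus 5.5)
Your proposal is correct and follows the paper's proof essentially step for step. You conserve $\langle z(t),v\rangle$ for $v\in L$ via Lemma~\ref{lem:gradient-orthogonal-affine-solution-directions}, transfer this through the intermediate identity $\lim\langle x(t),q-p\rangle=\lim\langle z(t),q-p\rangle$ from the proof of Theorem~\ref{thm:point-convergence-strict-gamma}, and then identify the limit via the variational characterization of $P_S$, with the Pythagoras justification and your note that the intermediate conclusion (not just the statement) of that theorem is being used serving as clarifications the paper leaves implicit.
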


\begin{proof}
	By Theorem~\ref{thm:point-convergence-strict-gamma}, there exists some point $x_\infty\in S$ such that
	\begin{eqnarray}\label{PC}
		x(t)\to x_\infty  \qquad \text{as }t\to\infty.
	\end{eqnarray}
	It remains to identify $x_\infty$.\\
	
	\noindent Since $S$ is affine, let $v\in L$, then by Lemma~\ref{lem:gradient-orthogonal-affine-solution-directions},
	$$
	\langle \nabla F(x(t)),v\rangle=0
	\qquad\text{for every }t\ge t_0.
	$$
	Taking the inner product of the anchor equation with $v$, we obtain
	$$
	\frac{d}{dt}\langle z(t),v\rangle
	=
	-\frac{t}{\lambda}(1-a)
	\langle \nabla F(x(t)),v\rangle
	=
	0.
	$$
	Hence, the affine component of the anchor is conserved:
	\begin{eqnarray}\label{z0}
		\langle z(t),v\rangle
		=
		\langle z_0,v\rangle
		\qquad
		\text{for every }t\ge t_0
		\quad\text{and every }v\in L.    
	\end{eqnarray}
	We now transfer this conserved anchor information to the primal trajectory. \\
	
	\noindent In the proof of Theorem~\ref{thm:point-convergence-strict-gamma}, it was shown that for every $p,q\in S$,
	\begin{eqnarray}\label{z01}
		\lim_{t\to\infty}\langle x(t),q-p\rangle
		=
		\lim_{t\to\infty}\langle z(t),q-p\rangle.  
	\end{eqnarray}
	Since every vector $v\in L$ can be written as $v=q-p$ for some $p,q\in S$, it follows from \eqref{PC}, \eqref{z0} and \eqref{z01} that 
	$$
	\langle x_\infty,v\rangle
	=
	\langle z_0,v\rangle
	\qquad\text{for every }v\in L.
	$$
	That is,
	$$
	\langle x_\infty-z_0,v\rangle=0
	\qquad\text{for every }v\in L.
	$$
	Thus
	$$
	z_0-x_\infty\in L^\perp.
	$$
	Since $x_\infty\in S=p_0+L$, this is precisely the variational characterization of the Euclidean projection of $z_0$ onto the affine subspace $S$. Therefore
	$$
	x_\infty=P_S(z_0).
	$$
	Consequently,
	$$
	x(t)\to P_S(z_0)
	\qquad\text{as }t\to\infty.
	$$
\end{proof}

\begin{corollary}[Rank-deficient least-squares problems]
	\label{cor:rank-deficient-least-squares-selection}
	Let
	$
	F(x)=\frac12\|Ax-b\|^2,
	$
	where $A\in\mathbb R^{m\times d}$ and $b\in\mathbb R^m$. Then the least-squares solution set
	$
	S:=\arg\min F
	$
	is a nonempty affine subspace of $\mathbb R^d$. More precisely, if $x_\star\in S$, then
	$
	S=x_\star+\ker A.
	$
	Under the assumptions of Theorem~\ref{thm:anchor-induced-projection-selection}, the solution of system \eqref{eq:system7} satisfies
	$$
	x(t)\longrightarrow P_S(z_0)
	\qquad\text{as }t\to\infty.
	$$
	In particular, if $A$ is rank deficient, then the least-squares minimizer is not unique, and the dynamics \eqref{eq:system7} selects, among all least-squares minimizers, the unique one that is closest in Euclidean norm to the initial anchor $z_0$.
\end{corollary}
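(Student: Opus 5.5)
The plan is to reduce the corollary to Theorem~\ref{thm:anchor-induced-projection-selection}. The only genuinely new ingredient is identifying the solution set of the least-squares problem as a nonempty affine subspace. Since $F(x)=\frac12\|Ax-b\|^2$ is convex and $C^1$ with $\nabla F(x)=A^\top(Ax-b)$, its gradient is globally Lipschitz with constant $\|A^\top A\|$. Hence the regularity hypotheses of that theorem hold automatically.

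First, I would show that $S=\arg\min F$ is nonempty. Decompose $b=b_1+b_2$ with $b_1\in\operatorname{range}A$ and $b_2\in(\operatorname{range}A)^\perp$. Then
\[
\|Ax-b\|^2=\|Ax-b_1\|^2+\|b_2\|^2,
\]
and this is minimized exactly when $Ax=b_1$, which is solvable. Equivalently, the normal equations $A^\top Ax=A^\top b$ have a solution, because $\operatorname{range}A^\top A=\operatorname{range}A^\top$. Since $F$ is convex, $x\in S$ if and only if $\nabla F(x)=0$, that is, if and only if $A^\top A x=A^\top b$.

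Second, fixing $x_\star\in S$, I would identify $S=x_\star+\ker A$. Indeed, $x\in S$ if and only if $A^\top A(x-x_\star)=0$. Pairing with $x-x_\star$ gives $\|A(x-x_\star)\|^2=0$, so $x-x_\star\in\ker A$; the converse inclusion is immediate. Thus $S$ is a nonempty affine subspace with direction space $L=\ker A$.

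Theorem~\ref{thm:anchor-induced-projection-selection} now applies directly and yields $x(t)\to P_S(z_0)$. For the final sentence, if $A$ is rank deficient then $\ker A\neq\{0\}$, so $S$ contains more than one point. Since $S$ is closed and convex, $P_S(z_0)$ is the unique point of $S$ minimizing $\|\cdot-z_0\|$. I do not expect any real obstacle here. The only step needing care is the nonemptiness of $S$, which is handled by the range decomposition above.
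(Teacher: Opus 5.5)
Your proposal is correct and matches the paper's approach: the paper states this corollary without a separate proof, as an immediate instance of Theorem~\ref{thm:anchor-induced-projection-selection} with $L=\ker A$, and you supply the routine facts it leaves implicit (nonemptiness of $S$ via the normal equations or the orthogonal splitting of $b$, and $S=x_\star+\ker A$ by pairing $A^\top A(x-x_\star)=0$ with $x-x_\star$). Since $\nabla F(x)=A^\top(Ax-b)$ is globally Lipschitz and $S\neq\emptyset$, the hypotheses of Theorem~\ref{thm:global-main-section4} (global existence) and of the selection theorem hold, so the reduction is complete.
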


\subsection{Quantitative Restoring--Damping Effects}
\label{subsec:quantitative-restoring-damping}

Set $r(t):=x(t)-z(t).$ We now make precise the stabilizing role of the nonlinear restoring-damping coefficient \(\eta\).  The following estimate shows that \(\eta\) imposes a finite logarithmic budget on large coordinatewise phase amplitudes in regions where the anchor displacement is non-negligible.

\begin{proposition}\label{PA1}
	Assume that \(F:\mathbb R^d\to\mathbb R\) is convex and continuously differentiable, that \(\nabla F\) is locally Lipschitz, and that
	$
	\arg\min F\neq\emptyset.$
	Fix $x^\star\in\arg\min F, ~~
	F^\star:=F(x^\star)=\min_{\mathbb R^d}F.$
	Let $
	t_0>0, ~
	\alpha\ge3, ~
	\lambda=\alpha-1, ~
	\frac12<a<1,~
	\gamma\ge0, ~
	\eta\ge0.$ Let \((x,y,z)\) be the global strong solution of \eqref{eq:system7}, and let \(E_{a,x^\star}\) denote the Lyapunov functional in \eqref{eq:Ea-section5}. Then, for every \(T\ge t_0\),
	$$
	\gamma\int_{t_0}^{T}\|y(t)\|^2\,dt
	+
	\eta
	\sum_{i=1}^d m_i^2
	\int_{t_0}^{T}
	\frac{r_i(t)^2y_i(t)^2}{t}\,dt
	\le
	E_{a,x^\star}(t_0)-E_{a,x^\star}(T).
	$$
	
	Consequently,
	$$
	\eta
	\sum_{i=1}^d m_i^2
	\int_{t_0}^{\infty}
	\frac{r_i(t)^2y_i(t)^2}{t}\,dt
	\le
	E_{a,x^\star}(t_0).
	$$
	If \(\eta>0\), \(m_i\neq0\), and \(\delta>0\), then
	$$
	\int_{\{t\ge t_0:\ |r_i(t)|\ge\delta\}}
	\frac{y_i(t)^2}{t}\,dt
	\le
	\frac{E_{a,x^\star}(t_0)}{\eta m_i^2\delta^2}.
	$$
	More generally, for every \(\varepsilon>0\),
	$$
	\int_{\{t\ge t_0:\ |r_i(t)|\ge\delta,\ |y_i(t)|\ge\varepsilon\}}
	\frac{dt}{t}
	\le
	\frac{E_{a,x^\star}(t_0)}
	{\eta m_i^2\delta^2\varepsilon^2}.
	$$
	Thus, for every fixed \(\delta>0\) and \(\varepsilon>0\), large phase amplitudes in the \(i\)-th coordinate cannot persist on a set of infinite logarithmic measure while the trajectory remains separated from the anchor in that coordinate.
\end{proposition}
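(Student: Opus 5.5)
The plan is to integrate the exact Lyapunov identity \eqref{eq:Ea-drop-section5} of Proposition \ref{prop:accelerated-lyapunov-a-section5}(i) and then apply a Chebyshev-type restriction to the relevant sets. First, write the quadratic form coordinatewise. By \eqref{eq:DM-definition}, with $r(t)=x(t)-z(t)$,
$$
\bigl\langle D_M(r(t))y(t),y(t)\bigr\rangle=\sum_{i=1}^d m_i^2 r_i(t)^2y_i(t)^2 .
$$
Next, observe that the bracket $(2a-1)[2t(F(x(t))-F^\star)-\lambda t\langle x(t)-x^\star,\nabla F(x(t))\rangle]$ in \eqref{eq:Ea-drop-section5} is nonpositive. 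This follows from the convexity inequality $\langle x-x^\star,\nabla F(x)\rangle\ge F(x)-F^\star$ together with $\lambda\ge2$ and $2a-1>0$. Dropping this bracket gives
$$
\dot E_{a,x^\star}(t)\le-\gamma\|y(t)\|^2-\eta\sum_{i=1}^d m_i^2\frac{r_i(t)^2y_i(t)^2}{t}.
$$
Since $E_{a,x^\star}$ is $C^1$ along the strong solution of Theorem \ref{thm:global-main-section4}, we integrate over $[t_0,T]$ and obtain the first inequality. This step is essentially Proposition \ref{prop:integrated-value-dissipation-section5} with the value term discarded.

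For the infinite-horizon bound, we use $E_{a,x^\star}(T)\ge0$. Every term in \eqref{eq:Ea-section5} is nonnegative because $F\ge F^\star$, $2a-1>0$ and $1-a>0$. This gives the bound $E_{a,x^\star}(t_0)$ uniformly in $T$. We also drop the nonnegative $\gamma$-term, and then let $T\to\infty$ using monotone convergence for the nonnegative integrands.

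For the level-set estimates, fix $i$ with $\eta>0$ and $m_i\neq0$. Keeping only the $i$-th summand, we have $\eta m_i^2\int_{t_0}^\infty r_i^2y_i^2\,dt/t\le E_{a,x^\star}(t_0)$. On the set $\{|r_i(t)|\ge\delta\}$, which is closed and hence measurable by continuity of $r_i$, we have $r_i^2y_i^2/t\ge\delta^2y_i^2/t$. Restricting the integral to this set and dividing by $\eta m_i^2\delta^2$ gives the third estimate. On the smaller set where additionally $|y_i|\ge\varepsilon$, we have $y_i^2/t\ge\varepsilon^2/t$, and this yields the logarithmic-measure bound.

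The closing interpretation follows because $dt/t$ is the logarithmic measure, so a set of infinite logarithmic measure would contradict the finite bound. The only point needing care is the sign of the dropped bracket, which is exactly where $\alpha\ge3$ and $a>1/2$ enter; all other steps are routine.
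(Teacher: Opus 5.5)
Your proposal is correct and follows essentially the same route as the paper: you integrate the dissipation inequality obtained from Proposition~\ref{prop:accelerated-lyapunov-a-section5}(i), use $E_{a,x^\star}(T)\ge 0$ and drop the $\gamma$-term, pass to $T\to\infty$ by monotone convergence, and then restrict to the level sets $\{|r_i|\ge\delta\}$ and $\{|r_i|\ge\delta,\ |y_i|\ge\varepsilon\}$. Your explicit check of the sign of the discarded value bracket (via convexity, $\lambda\ge2$, $2a-1>0$) and of the measurability of the level sets are minor details that the paper leaves implicit.
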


\begin{proof}
	From \eqref{eq:Ea-drop-section5}, we obtain
	\begin{eqnarray}\label{P1}
		\dot E_{a,x^\star}(t)
		\le
		-\gamma\|y(t)\|^2
		-\frac{\eta}{t}
		\langle D_M(r(t))y(t),y(t)\rangle .    
	\end{eqnarray}
	Using the diagonal form of \(D_M\) (see \eqref{eq:DM-definition} and \eqref{eq:DM-dissipation}), \eqref{P1} becomes
	\begin{eqnarray}\label{P2}
		\dot E_{a,x^\star}(t)
		\le
		-\gamma\|y(t)\|^2
		-
		\eta
		\sum_{i=1}^d m_i^2
		\frac{r_i(t)^2y_i(t)^2}{t}.
	\end{eqnarray}
	Integrating \eqref{P2} from \(t_0\) to \(T\) gives
	$$
	E_{a,x^\star}(T)-E_{a,x^\star}(t_0)
	\le
	-\gamma\int_{t_0}^{T}\|y(t)\|^2\,dt
	-
	\eta
	\sum_{i=1}^d m_i^2
	\int_{t_0}^{T}
	\frac{r_i(t)^2y_i(t)^2}{t}\,dt .
	$$
	Rearranging yields
	\begin{eqnarray}\label{P3}
		\gamma\int_{t_0}^{T}\|y(t)\|^2\,dt
		+
		\eta
		\sum_{i=1}^d m_i^2
		\int_{t_0}^{T}
		\frac{r_i(t)^2y_i(t)^2}{t}\,dt
		\le
		E_{a,x^\star}(t_0)-E_{a,x^\star}(T).
	\end{eqnarray}
	Dropping the nonnegative \(\gamma\)-term  and using $E_{a,x^\star}(T)\ge0$ 
	for every \(T\ge t_0\), \eqref{P3} becomes
	$$
	\eta
	\sum_{i=1}^d m_i^2
	\int_{t_0}^{T}
	\frac{r_i(t)^2y_i(t)^2}{t}\,dt
	\le
	E_{a,x^\star}(t_0).
	$$
	Letting \(T\to\infty\), and using monotone convergence for the nonnegative integrands, gives
	\begin{eqnarray}\label{P5}
		\eta
		\sum_{i=1}^d m_i^2
		\int_{t_0}^{\infty}
		\frac{r_i(t)^2y_i(t)^2}{t}\,dt
		\le
		E_{a,x^\star}(t_0).    
	\end{eqnarray}
	Now fix an index \(i\in\{1,\ldots,d\}\), and assume that \(\eta>0\), \(m_i\neq0\), and \(\delta>0\). 
	Define
	$$
	A_{i,\delta}
	:=
	\{t\ge t_0:\ |r_i(t)|\ge\delta\}.
	$$
	On \(A_{i,\delta}\), we have $r_i(t)^2\ge\delta^2.$
	Hence, from \eqref{P5}, we get
	$$
	\eta m_i^2\delta^2
	\int_{A_{i,\delta}}
	\frac{y_i(t)^2}{t}\,dt
	\le
	\eta m_i^2
	\int_{A_{i,\delta}}
	\frac{r_i(t)^2y_i(t)^2}{t}\,dt
	\le
	E_{a,x^\star}(t_0).
	$$
	Therefore,
	\begin{eqnarray}\label{P6}
		\int_{A_{i,\delta}}
		\frac{y_i(t)^2}{t}\,dt
		\le
		\frac{E_{a,x^\star}(t_0)}
		{\eta m_i^2\delta^2}.    
	\end{eqnarray}
	Finally, let \(\varepsilon>0\), and define
	$$
	B_{i,\delta,\varepsilon}
	:=
	\{t\ge t_0:\ |r_i(t)|\ge\delta,\ |y_i(t)|\ge\varepsilon\}.
	$$
	On \(B_{i,\delta,\varepsilon}\), we also have $
	y_i(t)^2\ge\varepsilon^2.$
	Thus, from \eqref{P6} we get
	$$
	\varepsilon^2
	\int_{B_{i,\delta,\varepsilon}}
	\frac{dt}{t}
	\le
	\int_{B_{i,\delta,\varepsilon}}
	\frac{y_i(t)^2}{t}\,dt
	\le
	\frac{E_{a,x^\star}(t_0)}
	{\eta m_i^2\delta^2}.
	$$
	This completes the proof. 
\end{proof}

\begin{proposition}[Coordinatewise damping amplification]\label{prop:coordinatewise-damping-amplification}
	Under the assumptions of Proposition \ref{PA1}, the \(i\)-th component of the phase variable satisfies for every \(t\ge s\ge t_0\),
	$$
	\begin{aligned}
		y_i(t)
		&=
		\exp\left(
		-\int_s^t
		\left[
		\gamma+\frac{\eta m_i^2 r_i(\tau)^2}{\tau}
		\right]d\tau
		\right)y_i(s)
		\\
		&\quad
		-(2a-1)
		\int_s^t
		\exp\left(
		-\int_\tau^t
		\left[
		\gamma+\frac{\eta m_i^2 r_i(\sigma)^2}{\sigma}
		\right]d\sigma
		\right)
		\tau\,\partial_iF(x(\tau))\,d\tau .
	\end{aligned}
	$$
	If, for some \(\delta>0\), $
	|r_i(\tau)|\ge\delta
	~\text{for every }\tau\in[s,t],$
	then the damping factor satisfies 
	$$
	\exp\left(
	-\int_s^t
	\left[
	\gamma+\frac{\eta m_i^2 r_i(\tau)^2}{\tau}
	\right]d\tau
	\right)
	\le
	e^{-\gamma(t-s)}
	\left(\frac{s}{t}\right)^{\eta m_i^2\delta^2}.
	$$
	In particular, if \(\eta>0\) and \(m_i\neq0\), then whenever the trajectory remains separated from the anchor in the \(i\)-th coordinate, the nonlinear restoring-damping term produces an additional polynomial damping factor beyond the linear damping factor \(e^{-\gamma(t-s)}\).
\end{proposition}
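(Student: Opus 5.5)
The plan is to read the $i$-th component of the phase equation in \eqref{eq:first-order-section4} as a scalar linear nonautonomous ODE and apply variation of constants. Since $D_M(r)$ is diagonal with entries $m_i^2r_i^2$, the $i$-th component of the $y$-equation decouples as
$$
\dot y_i(t)+\kappa_i(t)\,y_i(t)=(1-2a)\,t\,\partial_iF(x(t)),
\qquad
\kappa_i(t):=\gamma+\frac{\eta m_i^2 r_i(t)^2}{t}.
$$
By Theorem \ref{thm:global-main-section4}, $x$, $y$ and $z$ are $C^1$ on $[t_0,\infty)$. Hence $\kappa_i$ and the right-hand side are continuous, and the coefficient can be treated as a known continuous function along the given solution, even though it depends on the solution itself.

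First I will introduce the integrating factor $\Lambda_i(\tau):=\exp\bigl(\int_s^\tau\kappa_i(\sigma)\,d\sigma\bigr)$, which is $C^1$ with $\dot\Lambda_i=\kappa_i\Lambda_i$. Then $\frac{d}{d\tau}\bigl(\Lambda_i(\tau)y_i(\tau)\bigr)=(1-2a)\Lambda_i(\tau)\,\tau\,\partial_iF(x(\tau))$. Integrating this from $s$ to $t$ and dividing by $\Lambda_i(t)$ gives the stated representation, using the identity $\Lambda_i(\tau)/\Lambda_i(t)=\exp\bigl(-\int_\tau^t\kappa_i\bigr)$ and writing $1-2a=-(2a-1)$.

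Second, for the damping-factor bound, I will split
$$
\int_s^t\kappa_i(\tau)\,d\tau=\gamma(t-s)+\eta m_i^2\int_s^t\frac{r_i(\tau)^2}{\tau}\,d\tau .
$$
Under $|r_i|\ge\delta$ on $[s,t]$, together with $\eta m_i^2\ge0$, the second integral is at least $\eta m_i^2\delta^2\log(t/s)$. Exponentiating the negative of this lower bound, and using that the exponential is monotone, gives $e^{-\gamma(t-s)}(s/t)^{\eta m_i^2\delta^2}$.

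The final sentence of the statement is then interpretive. When $\eta>0$ and $m_i\ne0$, the exponent $\eta m_i^2\delta^2$ is strictly positive and $t>s$, so $(s/t)^{\eta m_i^2\delta^2}<1$ is a genuine extra polynomial factor multiplying $e^{-\gamma(t-s)}$.

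There is no real obstacle. The only point needing care is justifying the variation-of-constants formula, which requires the $C^1$ regularity of $y$ and the continuity of $r_i$ and $\partial_iF(x(\cdot))$; the global well-posedness result supplies these. The assumptions of Proposition \ref{PA1} are used only to guarantee that the global strong solution exists.
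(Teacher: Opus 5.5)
Your proposal is correct and follows the paper's proof essentially step for step. The paper also uses the diagonal structure of $D_M$ to decouple the $i$-th phase component, applies variation of constants to the resulting scalar linear ODE, and bounds $\int_s^t \eta m_i^2 r_i(\tau)^2/\tau\,d\tau$ below by $\eta m_i^2\delta^2\log(t/s)$; your explicit integrating-factor computation and regularity remark only spell out details the paper leaves implicit.
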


\begin{proof}
	From the phase equation \eqref{eq:first-order-section4} and \eqref{eq:DM-definition},
	we obtain, componentwise
	$$
	\dot y_i(t)
	=
	(1-2a)t\,\partial_iF(x(t))
	-\gamma y_i(t)
	-\frac{\eta m_i^2r_i(t)^2}{t}y_i(t).
	$$
	That is,
	$$
	\dot y_i(t)
	+
	\left(
	\gamma+\frac{\eta m_i^2r_i(t)^2}{t}
	\right)y_i(t)
	=
	-(2a-1)t\,\partial_iF(x(t)).
	$$
	This is a scalar nonautonomous linear differential equation. Thus, applying the variation-of-constants formula gives, for every \(t\ge s\ge t_0\),
	\begin{eqnarray}  \label{H1}
		\begin{aligned}
			y_i(t)
			&=
			\exp\left(
			-\int_s^t
			\left[
			\gamma+\frac{\eta m_i^2 r_i(\tau)^2}{\tau}
			\right]d\tau
			\right)y_i(s)
			\\
			&\quad
			-(2a-1)
			\int_s^t
			\exp\left(
			-\int_\tau^t
			\left[
			\gamma+\frac{\eta m_i^2 r_i(\sigma)^2}{\sigma}
			\right]d\sigma
			\right)
			\tau\,\partial_iF(x(\tau))\,d\tau .
		\end{aligned}
	\end{eqnarray}
	The homogeneous part of \eqref{H1} therefore contains the damping multiplier
	\begin{eqnarray}\label{H2}
		\exp\left(
		-\int_s^t
		\left[
		\gamma+\frac{\eta m_i^2 r_i(\tau)^2}{\tau}
		\right]d\tau
		\right).    
	\end{eqnarray}
	Now suppose that $
	|r_i(\tau)|\ge\delta
	~\text{for every }\tau\in[s,t].$
	Then $
	r_i(\tau)^2\ge\delta^2
	~\text{for every }\tau\in[s,t].$ Consequently,
	$$
	\int_s^t
	\left[
	\gamma+\frac{\eta m_i^2r_i(\tau)^2}{\tau}
	\right]d\tau
	\ge
	\gamma(t-s)
	+
	\eta m_i^2\delta^2
	\int_s^t\frac{d\tau}{\tau}.
	$$
	Thus, we obtain a bound for \eqref{H2} as follows:
	$$
	\exp\left(
	-\int_s^t
	\left[
	\gamma+\frac{\eta m_i^2r_i(\tau)^2}{\tau}
	\right]d\tau
	\right)
	\le
	e^{-\gamma(t-s)}
	\exp\left(
	-\eta m_i^2\delta^2
	\log\left(\frac{t}{s}\right)
	\right).
	$$
	Therefore,
	$$
	\exp\left(
	-\int_s^t
	\left[
	\gamma+\frac{\eta m_i^2r_i(\tau)^2}{\tau}
	\right]d\tau
	\right)
	\le
	e^{-\gamma(t-s)}
	\left(\frac{s}{t}\right)^{\eta m_i^2\delta^2}.
	$$
	If \(\eta>0\) and \(m_i\neq0\), then the exponent \(\eta m_i^2\delta^2\) is strictly positive, and the factor
	$
	\left(\frac{s}{t}\right)^{\eta m_i^2\delta^2}
	$
	is an additional polynomial damping factor. This proves the result.
\end{proof}

\begin{remark}[Interpretation of the nonlinear restoring-damping mechanism]
	\label{rem:nonlinear-damping-interpretation}
	Propositions~\ref{PA1} and~\ref{prop:coordinatewise-damping-amplification} quantify the stabilizing effect of the nonlinear parameter $\eta$. Since the $i$-th phase equation contains the effective damping coefficient
	$$
	\gamma+\frac{\eta m_i^2|x_i(t)-z_i(t)|^2}{t},
	$$
	the nonlinear damping is stronger in coordinates where the trajectory is farther from the endogenous anchor.\\
	
	\noindent Proposition~\ref{PA1} shows that, when $\eta>0$, large phase amplitudes cannot persist for infinite logarithmic time in coordinates that remain significantly displaced from the anchor. Proposition~\ref{prop:coordinatewise-damping-amplification} complements this result by showing that, on intervals where $|x_i(t)-z_i(t)|$ stays bounded away from zero, the nonlinear term contributes an additional polynomial attenuation to the homogeneous phase response. Thus, these propositions provide a quantitative description of the additional attenuation induced by the restoring--damping mechanism away from the anchor, while preserving the accelerated objective-value decay
	$
	F(x(t))-F^\star=\mathcal O(t^{-2}).
	$
\end{remark}

%\subsection{Stronger geometric assumptions on $F$: distance estimates, gradient rates, and the selection program}

%\section{Parameter regimes and dynamical classification}

\section{Numerical Experiments}
\label{sec:numerical-experiments}

We illustrate three main features of the proposed dynamics: anchor-induced selection, preservation of accelerated objective-value decay, and nonlinear
coordinatewise restoring-damping. All experiments for our proposed dynamics are based on \eqref{eq:first-order-section4}; the integration interval and solver settings are specified in each experiment.  \\

\noindent The numerical comparisons   focus on Nesterov's ODE and Hessian-driven damping, as these provide the most direct reference models for the selection and oscillation-suppression effects studied here. The other models discussed in Section \ref{sec:relation-existing-models} involve structurally different selection or feedback mechanisms and are therefore not used as direct numerical
benchmarks.

\subsection{Anchor-induced selection}
\label{subsec:anchor-induced-selection}

Consider the underdetermined least-squares problem
$
F(x)=\frac12(x_1+x_2-1)^2,
$
corresponding to
$
A=
\begin{pmatrix}
	1&1
\end{pmatrix},
~
b=1,
~
\operatorname{rank}A=1<d=2
$ (see Corollary \ref{cor:rank-deficient-least-squares-selection}). $F$ is convex and continuously differentiable, with globally
Lipschitz gradient, and its minimizer set $S$ is nonempty and affine:
$
S=\argmin F
=
\left\{
x\in\mathbb R^2:x_1+x_2=1
\right\}.
$
Thus the problem has infinitely many minimizers and is well suited for
illustrating the minimizer-selection mechanism of the proposed dynamics.
The Euclidean projection onto $S$ is
$$
P_S(z)
=
z-\frac{z_1+z_2-1}{2}
\begin{pmatrix}
	1\\
	1
\end{pmatrix}.
$$
We fix
$
t_0=1,~
\alpha=3,~
\lambda=2,~
a=\frac34,
$
and use the same initial primal position and velocity in all runs,
$
x_0=
\begin{pmatrix}
	-2\\
	2
\end{pmatrix},
~~
v_0=0,
$
with
$
\gamma=1,
~~
\eta=1,~~
M=I_2.
$
We then vary the initial anchor, choosing
$$
z_0^{(1)}
=
\begin{pmatrix}
	-1\\
	1
\end{pmatrix},
\qquad
z_0^{(2)}
=
\begin{pmatrix}
	0\\
	0
\end{pmatrix},
\qquad
z_0^{(3)}
=
\begin{pmatrix}
	1\\
	-1
\end{pmatrix}.
$$
The trajectories are integrated on $[1,100]$ using the adaptive DOP853 Runge--Kutta method with relative tolerance $10^{-10}$ and absolute tolerance $10^{-12}$.\\
All three anchors satisfy
$
z_{0,1}^{(j)}+z_{0,2}^{(j)}=0,
$
and therefore lie on the affine line
$
\{z\in\mathbb R^2:z_1+z_2=0\},
$
which is parallel to $S$. Hence the three anchors have the same signed
normal distance from $S$ while differing in their tangential components.
 The hypotheses relevant to the affine minimizer-selection result are
satisfied. By
Theorem~\ref{thm:anchor-induced-projection-selection}, the proposed
dynamics ensures $
x(t)\longrightarrow P_S(z_0).
$
Consequently, although the objective and the independent initial primal data $x_0,v_0$ are identical in all three runs, changing the initial anchor changes the minimizer selected asymptotically, as seen in Figure \ref{fig:experiment-selection}(a). 

 For reference, we also integrate the classical Nesterov flow \eqref{NODE}, and obtain 
$
x(t)\longrightarrow
\begin{pmatrix}
	-\frac32\\[1mm]
	\frac52
\end{pmatrix}
=
P_S(x_0).
$
The Nesterov trajectory is included only as a structural reference; the
comparison is not intended as a performance ranking between the two
dynamics.

 Table~\ref{tab:experiment-A-summary} reports the numerical states at
$t=100$ together with their distances from the corresponding reference
minimizers. For the proposed dynamics, all three terminal states are within
$7\times10^{-4}$ of the theoretically predicted projections
$P_S(z_0^{(j)})$. In particular, the three runs approach distinct points of
the same minimizer set despite having identical $x_0$ and $v_0$. This
provides a direct numerical illustration of the anchor-induced projection
selection established in
Theorem~\ref{thm:anchor-induced-projection-selection}.

\begin{table}[htbp]
	\centering
	\small
	\caption{Anchor-induced selection at $t=100$.}
	\label{tab:experiment-A-summary}
	\begin{tabular}{@{}llll@{}}
		\toprule
		Case & Reference minimizer & $x(100)$ & Distance to reference \\
		\midrule
		
		$z_0^{(1)}=(-1,1)$
		&
		$P_S(z_0^{(1)})=(-1/2,3/2)$
		&
		$(-0.5003584,1.5003584)$
		&
		$5.068\times10^{-4}$
		\\
		
		$z_0^{(2)}=(0,0)$
		&
		$P_S(z_0^{(2)})=(1/2,1/2)$
		&
		$(0.4995782,0.5004218)$
		&
		$5.966\times10^{-4}$
		\\
		
		$z_0^{(3)}=(1,-1)$
		&
		$P_S(z_0^{(3)})=(3/2,-1/2)$
		&
		$(1.4995677,-0.4995677)$
		&
		$6.113\times10^{-4}$
		\\
		
		\midrule
		
		Nesterov reference
		&
		$P_S(x_0)=(-3/2,5/2)$
		&
		$(-1.5006197,2.4993803)$
		&
		$8.764\times10^{-4}$
		\\
		
		\bottomrule
	\end{tabular}
\end{table}

\noindent Figure~\ref{fig:experiment-selection} provides the corresponding geometric
and quantitative views. Panel~(a) shows the trajectories in the
$(x_1,x_2)$-plane together with the affine minimizer set $S$. The three
proposed trajectories start from the same $x_0$ but approach the three
distinct projection points $P_S(z_0^{(j)})$. The Nesterov trajectory,
shown as a structural reference, follows a single anchor-independent path
and approaches $P_S(x_0)$ in this example. Thus panel~(a) makes visible the
additional selection flexibility introduced by the anchor variable.\\

\noindent Panel~(b) shows the corresponding projection-selection errors
$
\|x(t)-P_S(z_0)\|
$
on a logarithmic vertical scale. For all three anchor choices, the
numerically computed errors decrease smoothly and monotonically over the
interval $1\leq t\leq100$. This observed decay is consistent with
the projection-selection theorem and complements the geometric picture in
panel~(a) by directly measuring convergence toward the minimizer selected by
each initial anchor. 

\begin{figure}[htbp]
	\centering
	\includegraphics[width=\textwidth]
	{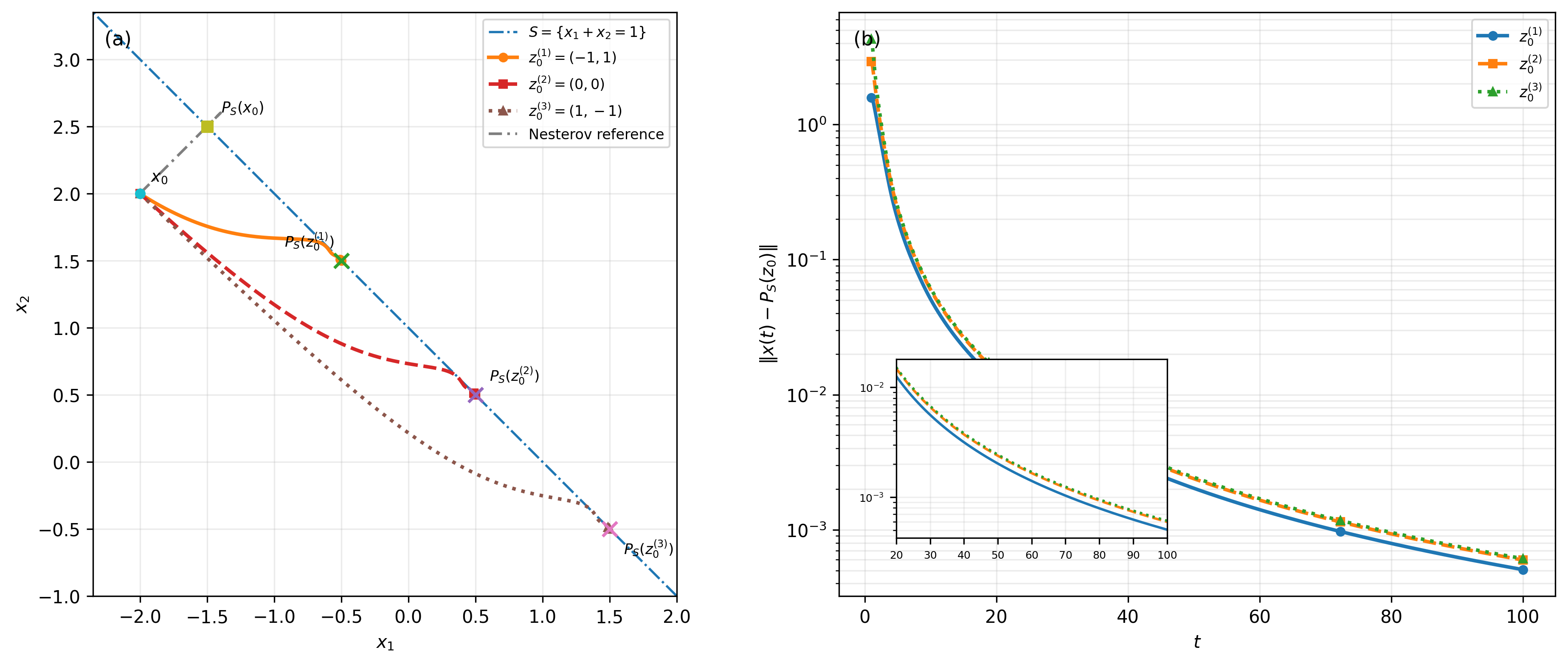}
	\caption{Anchor-induced minimizer selection.
		Panel~(a) shows the trajectories for three different initial anchors. Panel~(b) shows the corresponding selection errors $\|x(t)-P_S(z_0)\|$.}
	\label{fig:experiment-selection}
\end{figure}

\subsection{Accelerated decay under nonlinear restoring-damping}
\label{subsec:accelerated-decay}

Consider the scalar convex objective
$
F(x)=\frac14x^4,
~
F^\star=0,
~
x^\star=0.
$
This example provides a simple setting in which to examine the effect of the
nonlinear restoring-damping parameter $\eta$ on the transient dynamics while
monitoring the accelerated objective-value estimate.
\noindent Using
$
t_0=1,
~
\alpha=3,
~
\lambda=2,
~
a=\frac34,
$
we take
$
x_0=2,
~
z_0=\frac34,
~
v_0=0,
~
\gamma=\frac{3}{4},
~
M=1,
$
and compare
$$
\eta=0,
\qquad
\eta=\frac12,
\qquad
\eta=2.
$$
Theorem~\ref{thm:value-estimate-a-section5} gives 
$
F(x(t))-F^\star
\leq
\frac{C}{t^2},
$
where
$
C
=
F(x_0)
+
\frac{|y_0|^2}{2(2a-1)}
+
\frac{\lambda^2}{2(1-a)}
|z_0-x^\star|^2
=
\frac{59}{4}.
$\\

\noindent Numerically, the objective residuals are smooth and monotonically decreasing
over the displayed interval $1\leq t\leq100$ for all three parameter choices of $\eta$. 
The absence of a visible objective-value rebound in this experiment provides
a clean setting for comparing the different transient profiles generated by
the nonlinear restoring-damping term.\\

\noindent Table~\ref{tab:experiment-B-summary} reports the terminal objective values,
the compensated residuals at $t=100$, and the largest compensated values
observed over the simulated interval. In every case,
$
t^2\bigl(F(x(t))-F^\star\bigr)
$
remains well below the theoretical constant
$
C=\frac{59}{4}=14.75.
$ Also,  changing $\eta$ modifies the transient dynamics without destroying the common accelerated objective-value estimate.
\begin{table}[htbp]
	\centering
	\small
	\caption{Objective-value decay for different values of $\eta$. }
	\label{tab:experiment-B-summary}
	\begin{tabular}{@{}lllll@{}}
		\toprule
		$\eta$
		&
		$F(x(100))$
		&
		$100^2F(x(100))$
		&
		$\displaystyle\max_{1\leq t\leq100}t^2F(x(t))$
		&
		$\displaystyle\frac{\max t^2F(x(t))}{C}$
		\\
		\midrule
		
		$0$
		&
		$1.5991\times10^{-7}$
		&
		$1.5991\times10^{-3}$
		&
		$4.5584$
		&
		$3.090\times10^{-1}$
		\\
		
		$\frac12$
		&
		$1.0654\times10^{-7}$
		&
		$1.0654\times10^{-3}$
		&
		$4.4518$
		&
		$3.018\times10^{-1}$
		\\
		
		$2$
		&
		$4.8292\times10^{-8}$
		&
		$4.8292\times10^{-4}$
		&
		$4.2880$
		&
		$2.907\times10^{-1}$
		\\
		
		\bottomrule
	\end{tabular}
\end{table}

\noindent Figure~\ref{fig:experiment-rate} gives the corresponding graphical illustration. Panel~(a) shows
$
F(x(t))-F^\star
$
on logarithmic axes together with the common theoretical envelope
$
\frac{C}{t^2}.
$
All three residuals decrease smoothly over the simulated interval and remain below the same theoretical envelope.
Panel~(b) displays the compensated residual
$
t^2\bigl(F(x(t))-F^\star\bigr).
$
The horizontal line $C=59/4$ represents the theoretical Lyapunov bound.
The fact that all three compensated trajectories remain well below this
level illustrates numerically that the additional nonlinear
restoring-damping feedback is compatible with the accelerated $\mathcal O(t^{-2})$ objective-value estimate.

\begin{figure}[htbp]
	\centering
	\includegraphics[width=\textwidth]
	{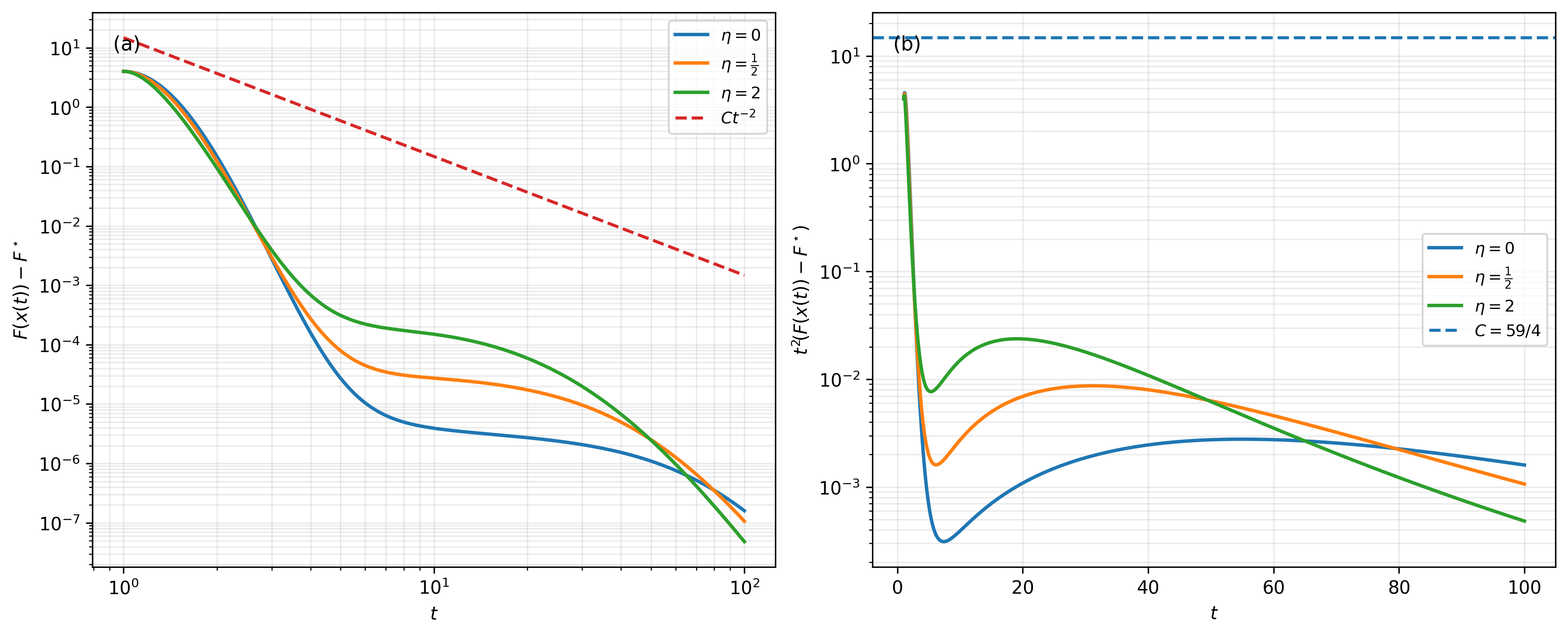}
	\caption{Accelerated objective-value decay under nonlinear
		restoring-damping. Panel~(a) shows $F(x(t))-F^\star$ on log-log axes, with the theoretical
		envelope $Ct^{-2}$. Panel~(b) shows the corresponding
		compensated residuals $t^2(F(x(t))-F^\star)$ with the theoretical bound $C$.}
	\label{fig:experiment-rate}
\end{figure}

\noindent This experiment therefore illustrates two complementary features of the proposed mechanism. First, introducing the nonlinear restoring-damping term does not alter the common $\mathcal O(t^{-2})$ objective-value guarantee. Second, the parameter $\eta$ provides a means of modifying the transient behavior while maintaining smooth objective decay; in the present example, increasing $\eta$ reduces the observed oscillations. Thus, the experiment demonstrates that the restoring-damping feedback can substantially modify the transient dynamics while remaining compatible with the accelerated rate established by the theory (Theorem~\ref{thm:value-estimate-a-section5}).

\subsection{Coordinatewise restoring-damping and oscillation suppression}
\label{subsec:coordinatewise-numerics}

We next examine the transient behavior induced by the coordinatewise anchor-displacement restoring-damping mechanism and compare it with the Nesterov ODE \eqref{NODE} and the Hessian-driven damping system \eqref{Hessian}. 

 We consider the quadratic function
$
F(x)
=
\frac12\left(x_1^2+10x_2^2\right),
~~
x=(x_1,x_2)^\top\in\mathbb{R}^2.
$
The unique minimizer and minimum value are
$
x^\star=(0,0)^\top,
~ F^\star=0.
$ \noindent For all three dynamics we take
$
t_0=1,
~
\alpha=3,
~
\lambda=\alpha-1=2,
$
with common primal initial data
$
x(1)
=
\begin{pmatrix}
	1\\
	1
\end{pmatrix},
~
\dot x(1)
=
\begin{pmatrix}
	0\\
	0
\end{pmatrix}.
$
 For the proposed dynamics, we use
$
a=0.95,
~
\gamma=0,
~\eta=60,
~
M=I_2,
$
and initialize the endogenous anchor at
$
z(1)
=
\begin{pmatrix}
	0.5\\
	0.8
\end{pmatrix}.
$ For comparison, we take  $\beta=0.15$ for the Hessian-driven damping system \eqref{Hessian}.

 The trajectories are integrated on $[1,8]$ using the high-order adaptive DOP853 Runge--Kutta method, with relative and absolute tolerances of $10^{-10}$ and $10^{-12}$, respectively, or tighter for
the tail-dissipation diagnostic. The main parameters and numerical
diagnostics are summarized in
Table~\ref{tab:coordinatewise-comparison}. A zero crossing is counted
whenever a coordinate changes sign on $(1,8]$.
\begin{table}[t]
	\centering
	\caption{Parameters and numerical diagnostics for the coordinatewise
		comparison on $[1,8]$.}
	\label{tab:coordinatewise-comparison}
	\renewcommand{\arraystretch}{1.18}
	\begin{tabular}{lccc}
		\hline
		& Proposed dynamics
		& Hessian-driven
		& Nesterov ODE
		\\
		\hline
		$a$ & $0.95$ & --- & --- \\
		$\gamma$ & $0$ & --- & --- \\
		$\eta$ & $60$ & --- & --- \\
		$\beta$ & --- & $0.15$ & $0$ \\
		$x_1$ zero crossings & $0$ & $2$ & $2$ \\
		$x_2$ zero crossings & $0$ & $6$ & $7$ \\
		$\displaystyle \min_{1\leq t\leq8}x_1(t)$
		& $0.17587$
		& $-0.12548$
		& $-0.16840$
		\\
		$\displaystyle \min_{1\leq t\leq8}x_2(t)$
		& $0.01448$
		& $-0.13673$
		& $-0.35687$
		\\
		$F(x(8))-F^\star$
		& $1.6513\times10^{-2}$
		& $5.7834\times10^{-4}$
		& $1.2484\times10^{-2}$
		\\
		\hline
	\end{tabular}
\end{table}

\noindent Figure~\ref{fig:expC-x1} compares the coordinate trajectories. For the proposed dynamics, both $x_1(t)$ and $x_2(t)$ remain positive and decrease smoothly throughout the displayed transient; $(x_1(t),  x_2(t))^\top$ does not cross the minimizer, $
x^\star=(0,0)^\top$. By contrast, both comparison systems overshoot the minimizer in both coordinates. In the first coordinate $x_1(t)$, the Hessian-driven and Nesterov trajectories each cross zero twice.  The contrast is even more pronounced in the second coordinate $x_2(t)$. The Hessian-driven trajectory crosses zero six times and reaches $\min_{1\leq t\leq8}x_2(t)\approx-0.13673,$ whereas Nesterov's ODE crosses zero seven times and reaches the
substantially larger excursion $ \min_{1\leq t\leq8}x_2(t) \approx-0.35687.$ Hence,  the Hessian-driven damping moderates the oscillatory behavior of Nesterov's ODE but does not eliminate it. However, the proposed dynamics removes the
overshoots and produces a nonoscillatory transient.

 Figure~\ref{fig:expC-x1z1} displays the motion of the endogenous anchor. Since
$
\dot z(t)
=
-\frac{t}{\lambda}(1-a)\nabla F(x(t)),
$
we obtain
$
\dot z_1(t)
=
-0.025\,t\,x_1(t)$ and $
\dot z_2(t)
=
-0.25\,t\,x_2(t).
$
Because $x_1(t)$ and $x_2(t)$ remain positive on
$[1,8]$, both anchor components decrease monotonically over the same
interval. These plots
therefore illustrate that the proposed dynamics combines a smooth primal trajectory with an evolving endogenous reference
state, rather than relying on a fixed anchor.
 Figure~\ref{fig:expC-objective} ({\bf Left}) 
shows the objective residual
$
F(x(t))-F^\star.
$
For the proposed dynamics, the numerically computed residual decreases
monotonically throughout $[1,8]$. By contrast, the Hessian-driven and
Nesterov trajectories display nonmonotone objective transients
associated with their repeated coordinate overshoots.   Figure~\ref{fig:expC-objective} ({\bf Right}) illustrates the remaining nonlinear damping activity in the second coordinate of the {\it proposed dynamics}. We define the coordinatewise tail dissipation as
$$
J_{2,\mathrm{tail}}(T)
=
\eta
\int_T^\infty
\frac{
	\bigl(x_2(t)-z_2(t)\bigr)^2y_2(t)^2
}{t}\,dt.
$$
\noindent Numerically, the infinite endpoint is approximated by
$T_{\max}=200$. Increasing the cutoff beyond $T_{\max}=200$
changes the reported values by less than the displayed precision. The
resulting values are
$
J_{2,\mathrm{tail}}(1)
\approx6.57048,$ and $
J_{2,\mathrm{tail}}(8)
\approx0.02840.
$
Thus,
$
\frac{J_{2,\mathrm{tail}}(8)}
{J_{2,\mathrm{tail}}(1)}
\approx4.32\times10^{-3},
$
so only about $0.43\%$ of the initial tail-dissipation value remains at
the end of the displayed interval. The pronounced decrease seen in
Figure~\ref{fig:expC-objective} ({\bf Right}) therefore indicates that most of the remaining coordinate-$2$
nonlinear damping activity has already been exhausted by $T=8$.

\begin{figure}
	\begin{center}
		\includegraphics[width=8cm]{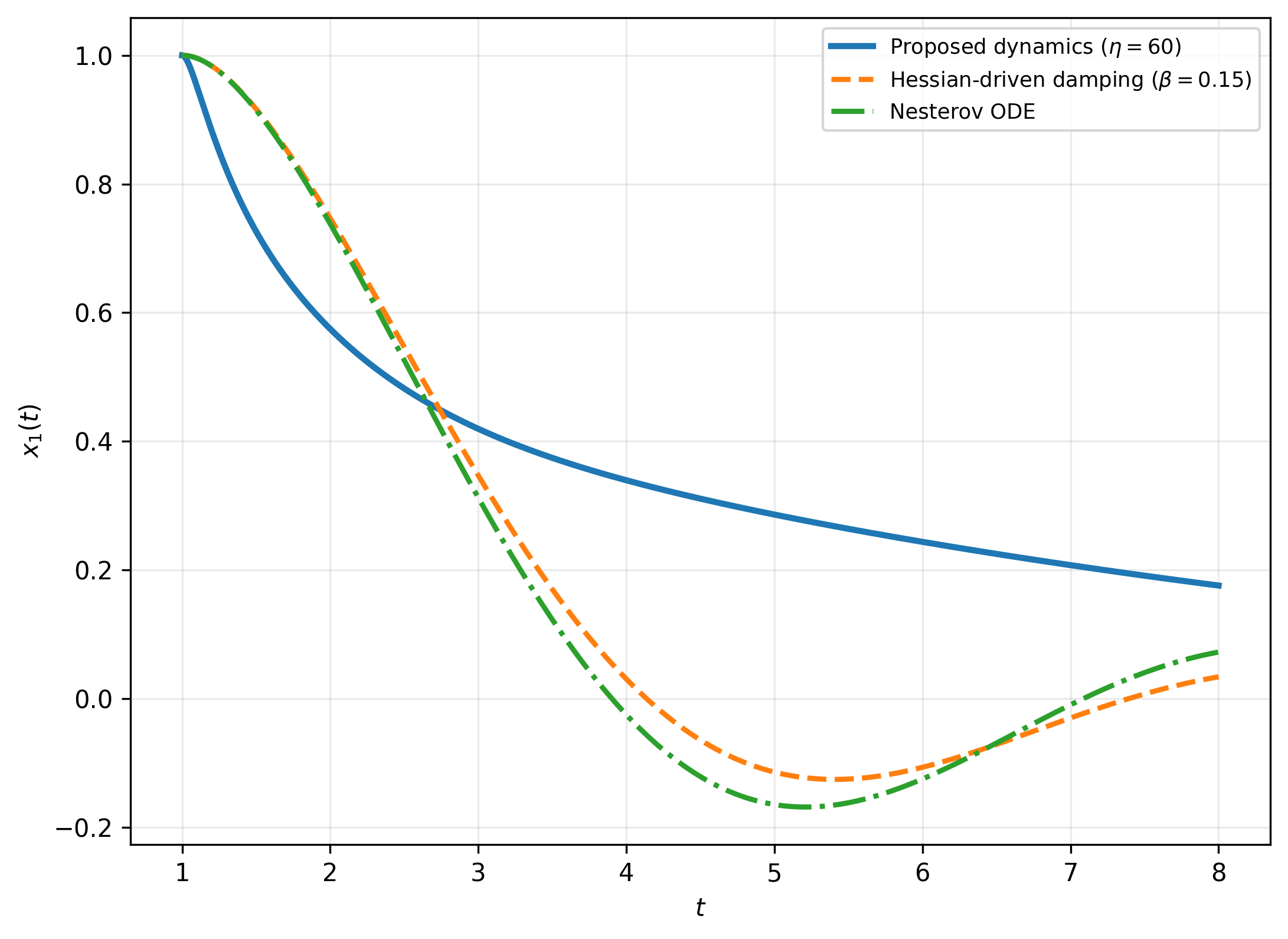}	
		\includegraphics[width=8cm]{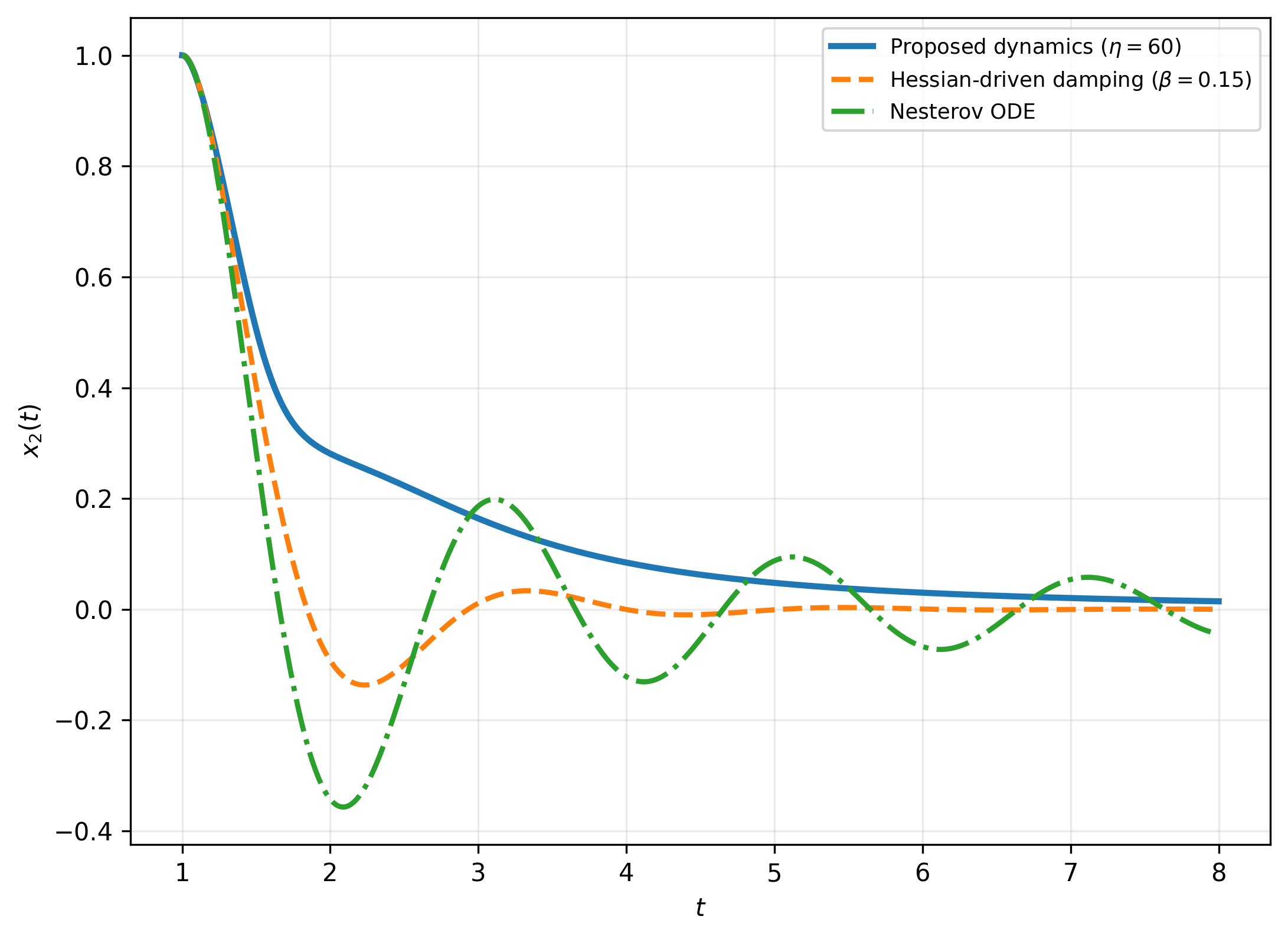}
	\end{center}
	\caption{{\bf Left:} First-coordinate trajectories. {\bf Right}: Second-coordinate trajectories. }\label{fig:expC-x1}
\end{figure}

\begin{figure}
	\begin{center}
		\includegraphics[width=8cm]{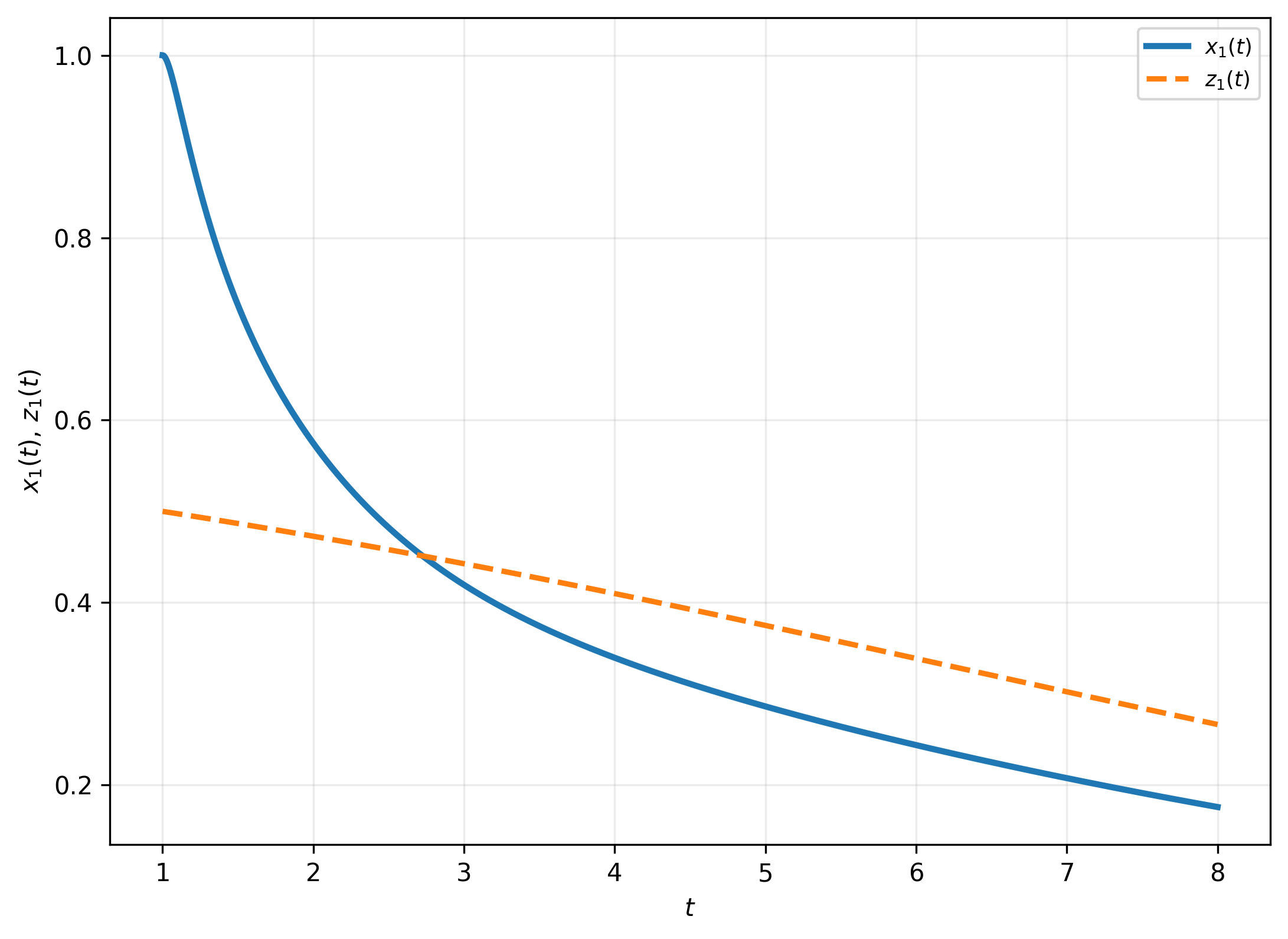}	
		\includegraphics[width=8cm]{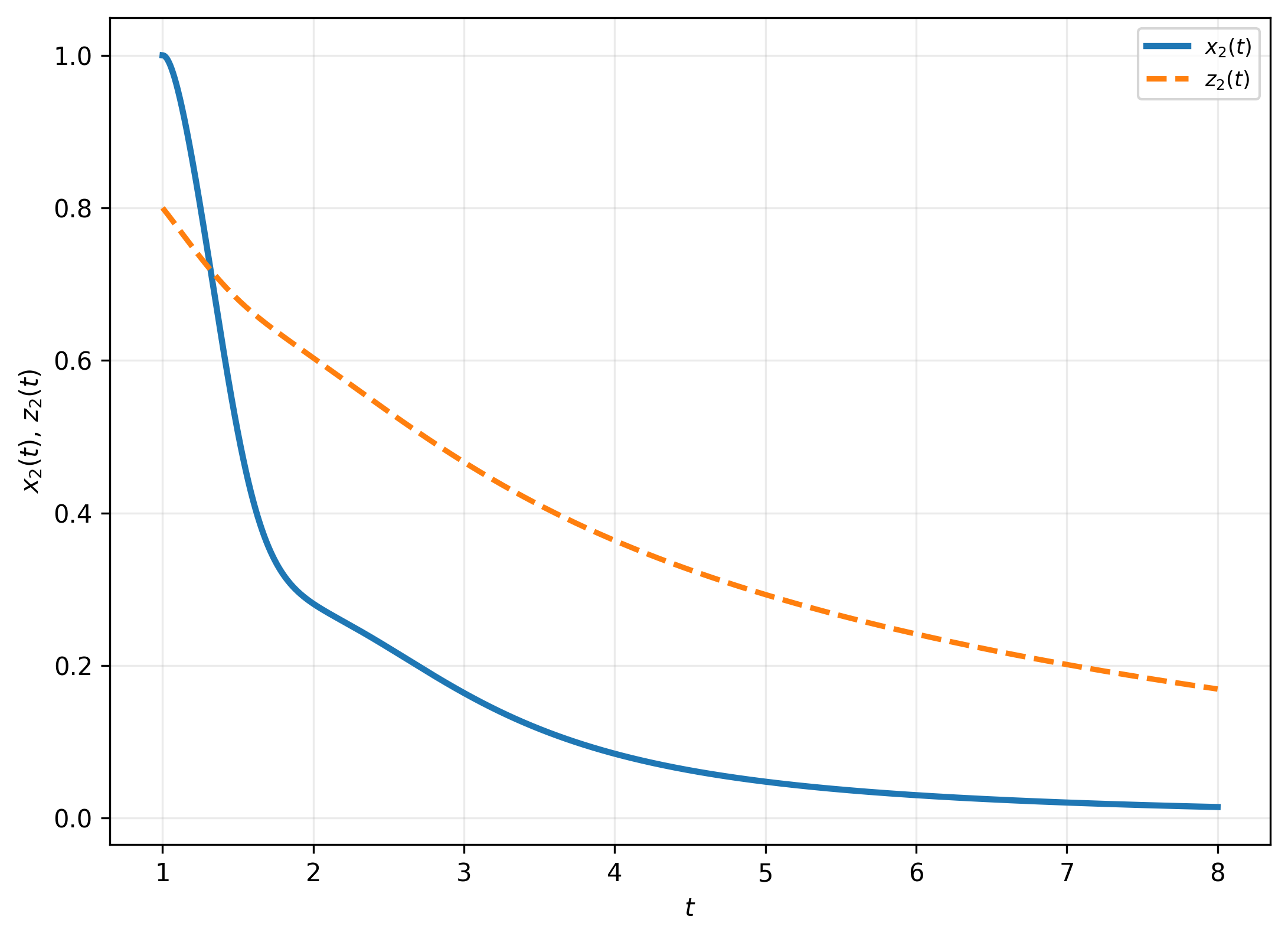}
	\end{center}
	\caption{{\bf Left:} Evolution of $x_1(t)$ and    $z_1(t)$. {\bf Right}: Evolution of $x_2(t)$ and $z_2(t)$.}\label{fig:expC-x1z1}
\end{figure}

\begin{figure}
	\begin{center}
		\includegraphics[width=8cm]{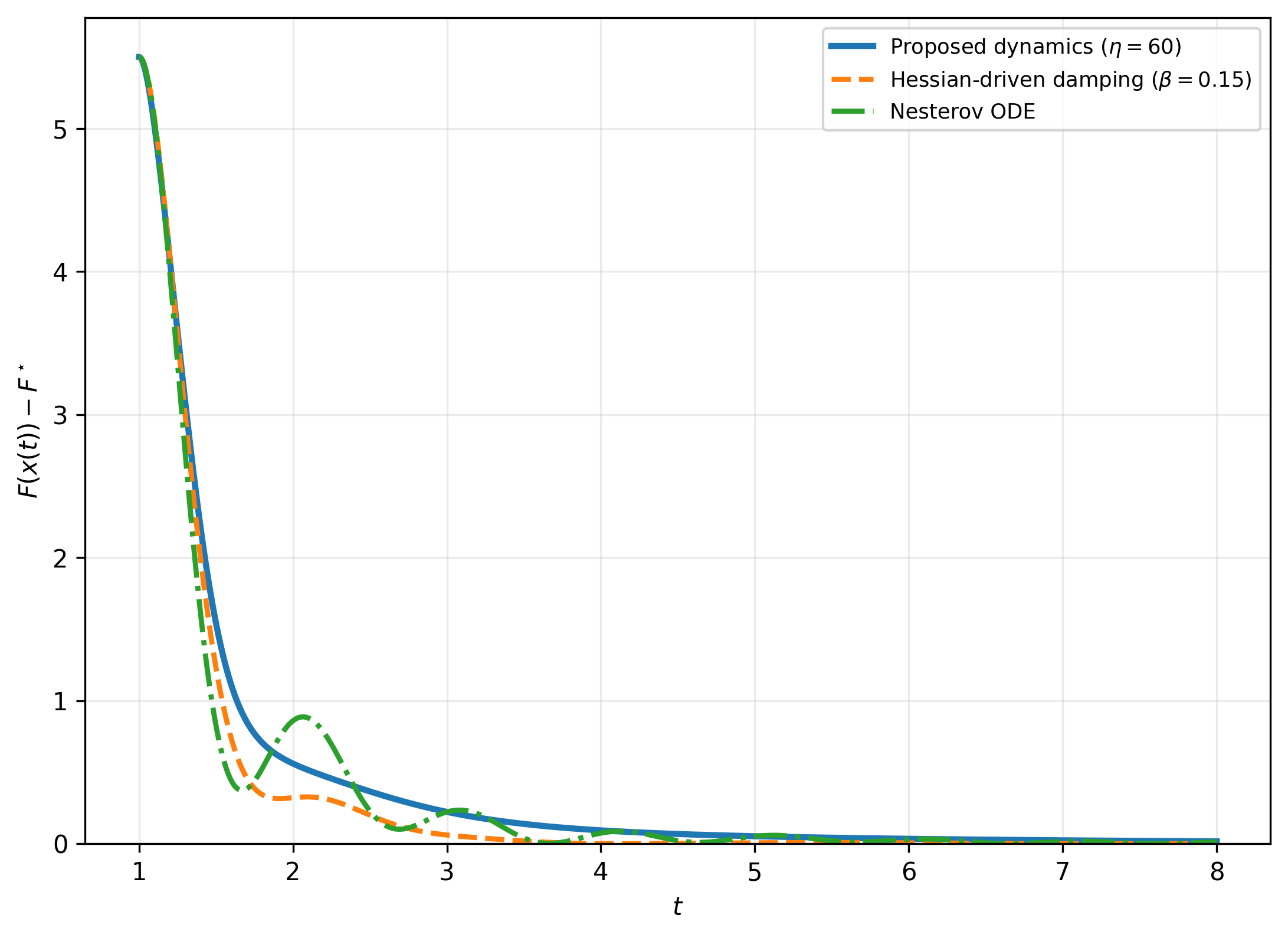}	
		\includegraphics[width=8cm]{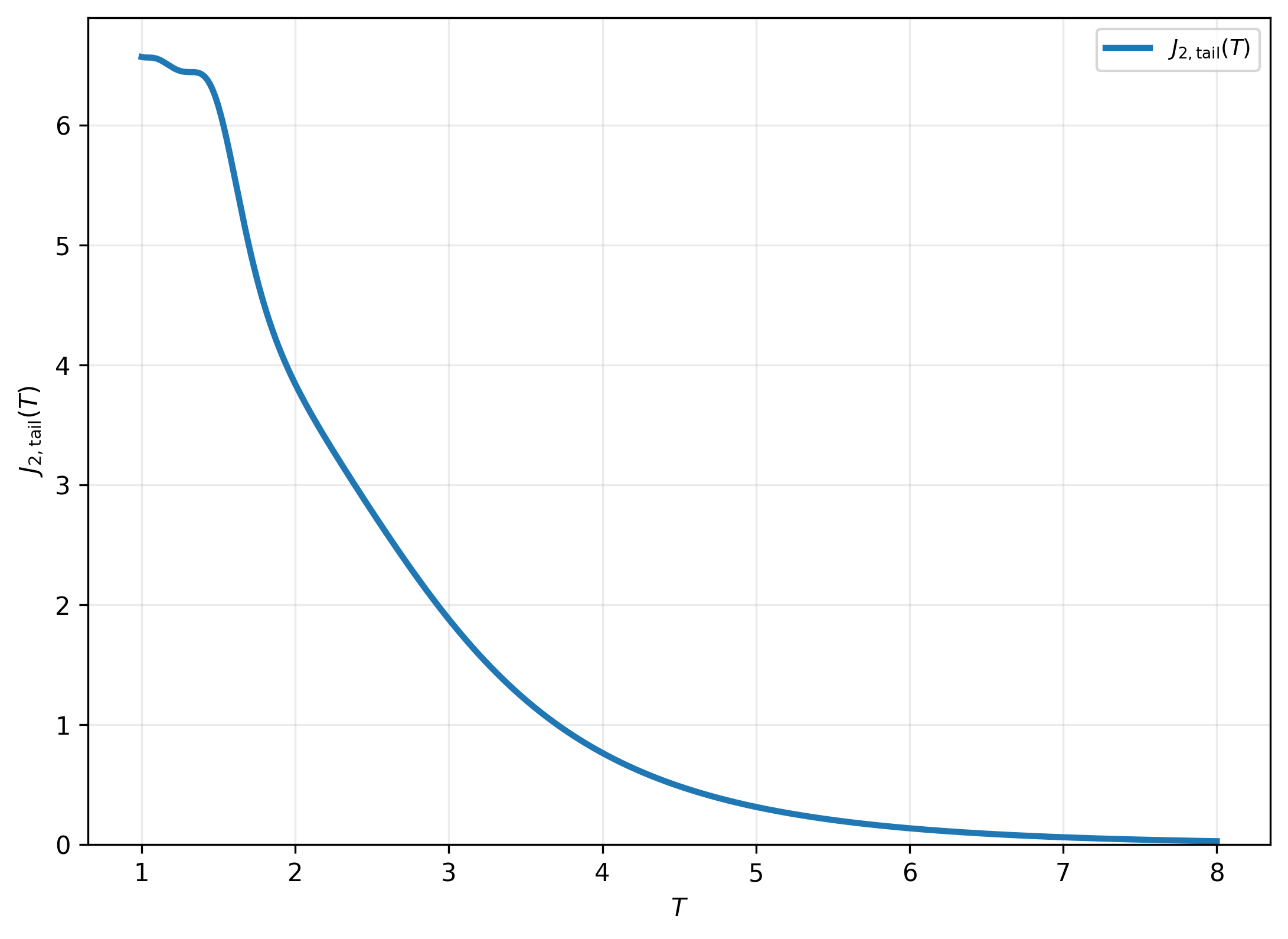}
	\end{center}
	\caption{{\bf Left:} Objective residual $F(x(t))-F^\star$.  {\bf Right}: Coordinate-$2$ tail dissipation. }\label{fig:expC-objective}
\end{figure}

\noindent In summary, although the Hessian-driven run attains a smaller residual at $t=8$, the proposed dynamics is the only run with smooth, sign-preserving trajectories, no zero crossings, and monotone residual on $[1,8]$. Thus, the advantage illustrated here is transient regularity and overshoot suppression rather than the smallest terminal residual.  

\section{Conclusion}\label{Con}

 This paper developed and analyzed an accelerated gradient flow with an endogenous gradient-memory anchor and a nonlinear restoring--damping feedback. The anchor is generated dynamically from a weighted history of the gradients, while the nonlinear feedback is modulated by the squared coordinatewise displacement from the anchor. Under the principal assumptions, we established global existence and uniqueness of strong solutions and constructed a Lyapunov functional that yields
$
F(x(t))-F^\star=\mathcal O(t^{-2}).
$
The nonlinear restoring--damping term enters the Lyapunov estimate with a favorable dissipative sign and therefore remains compatible with the accelerated objective-value rate. When $\gamma>0$, the primal trajectory converges strongly to a minimizer of $F$.

 A second feature of the dynamics is the role of the initial anchor in minimizer selection. When the solution set
$
S:=\argmin F
$
is a nonempty affine set, the limiting point is determined explicitly by
$
x(t)\longrightarrow P_S(z_0).
$
Thus, in the affine setting, the anchor does more than provide a moving reference state: its initial value determines which minimizer is selected. The rank-deficient least-squares case gives a concrete instance of this principle, with the dynamics converging to the least-squares solution closest to $z_0$.

 The nonlinear feedback also gives a quantitative displacement-sensitive dissipation mechanism. For $\eta>0$, we obtained a finite weighted coordinatewise phase-dissipation estimate. In addition, on intervals where an active coordinate remains separated from the anchor, the corresponding homogeneous phase response acquires an additional polynomial attenuation factor. These results give a precise description of the restoring--damping effect without requiring explicit Hessian information.

 The numerical experiments are consistent with these theoretical mechanisms. In the underdetermined least-squares example, changing the initial anchor while keeping the same primal initial data led to different limiting minimizers, in agreement with the projection-selection result. In the scalar example, varying the nonlinear parameter changed the transient profiles while the objective residuals remained consistent with the accelerated $\mathcal O(t^{-2})$ estimate. Finally, in the two-dimensional quadratic experiment, the proposed dynamics produced sign-preserving coordinate trajectories with no zero crossings on the displayed interval, whereas the Nesterov and Hessian-driven reference trajectories exhibited repeated zero crossings.  These observations illustrate how the anchor and restoring--damping mechanisms can influence both minimizer selection and transient behavior.

 Several questions remain open. It would be natural to determine whether the anchor-induced selection principle extends beyond affine solution sets. Another direction is to investigate whether stronger assumptions on $F$, such as error-bound conditions \cite{DrusvyatskiyLewis18}, Polyak--Lojasiewicz conditions \cite{Polyak63,KarimiNutiniSchmidt16,ApidopoulosGinattaVilla22}, or Kurdyka--{\L}ojasiewicz conditions \cite{Kurdyka98,BolteDaniilidisLewis07,AttouchBolteSvaiter13}, lead to sharper convergence rates for the distance to the solution set, the gradient norm, or the trajectory \cite{SebbouhDossalRondepierre20}. Finally, the development of discrete-time algorithms that preserve the selection and restoring--damping features of the continuous-time system is an important direction for future work.

\section*{Declarations}
\textbf{Conflict of interest}: There is no conflict of interest.

%\hfill

%\noindent \textbf{Acknowledgment}:

\noindent {\bf Authors' contributions:} All authors contributed equally and approved the final manuscript.

\hfill

\appendix

\section{Proof of Claim 1: \(\rho\in L^1(t_0,\infty)\)}\label{claim1}
The Lyapunov dissipation
inequality \eqref{eq:lyapunov-derivative-ineq-section4} gives
$$
\frac{d}{dt}E_{a,p}(t)
\le
-\gamma\|y(t)\|^2 .
$$
Integrating from \(t_0\) to \(T\), we obtain
$$
\gamma\int_{t_0}^{T}\|y(t)\|^2\,dt
\le
E_{a,p}(t_0)-E_{a,p}(T)
\le
E_{a,p}(t_0).
$$
Letting \(T\to\infty\), and using \(\gamma>0\), yields
$$
\int_{t_0}^{\infty}\|y(t)\|^2\,dt<\infty .
$$
Thus
$$
y\in L^2(t_0,\infty;\mathbb R^d).
$$
Since \(x(t)\) and \(z(t)\) are bounded, \(x(t)-z(t)\) is bounded.  Since
\(D_M(u)\) depends polynomially on \(u\), the matrix $
D_M(x(t)-z(t))$
is bounded on \([t_0,\infty)\).  Hence there exists a constant \(C_d>0\) such
that
$$
|\rho(t)|
\le
\frac{C_d}{t}\|y(t)\|.
$$
By Cauchy--Schwarz,
$$
\int_{t_0}^{\infty}|\rho(t)|\,dt
\le
C_d
\left(
\int_{t_0}^{\infty}\|y(t)\|^2\,dt
\right)^{1/2}
\left(
\int_{t_0}^{\infty}\frac{dt}{t^2}
\right)^{1/2}
<\infty.
$$

\hfill

\section{\Large \bf Proof of Existence and Uniqueness Result}\label{app:proofs-section4} 

For the reader's convenience, we collect here the proofs of the well-posedness
results stated in Section \ref{EU}.

\begin{proof}[Proof of Theorem \ref{thm:local-existence-section4}]
	Define
	$$
	y_0=\lambda(x_0-z_0)+t_0v_0.
	$$
	Set
	$$
	U(t)=\bigl(x(t),y(t),z(t)\bigr)\in\mathbb R^{3d}.
	$$
	For $t>0$, define the vector field
	$$
	\mathcal G(t,\xi,\omega,\zeta)
	=
	\begin{pmatrix}
		\displaystyle
		\frac{1}{t}\left[\omega-\lambda(\xi-\zeta)\right]
		\\[2mm]
		\displaystyle
		(1-2a)t\nabla F(\xi)
		-\gamma\omega
		-\frac{\eta}{t}D_M(\xi-\zeta)\omega
		\\[2mm]
		\displaystyle
		-\frac{t}{\lambda}(1-a)\nabla F(\xi)
	\end{pmatrix}.
	$$
	The mapping $u\mapsto D_M(u)$ is polynomial, and hence locally Lipschitz. Therefore
	$$
	(\xi,\omega,\zeta)
	\mapsto
	D_M(\xi-\zeta)\omega
	$$
	is locally Lipschitz on $\mathbb R^{3d}$. Since $\nabla F$ is locally Lipschitz on
	$\mathbb R^d$, it follows that
	$$
	(\xi,\omega,\zeta)\mapsto \mathcal G(t,\xi,\omega,\zeta)
	$$
	is locally Lipschitz on $\mathbb R^{3d}$, uniformly for $t$ in compact subintervals of
	$(0,\infty)$. Moreover, for each fixed $(\xi,\omega,\zeta)$, the map
	$$
	t\mapsto \mathcal G(t,\xi,\omega,\zeta)
	$$
	is continuous on $(0,\infty)$.\\
	
	\noindent By the Cauchy--Lipschitz theorem for nonautonomous ordinary differential equations
	\cite{CL,Hartman}, the initial-value problem
	$$
	\dot{U}(t)=\mathcal G(t,U(t)),
	\qquad
	U(t_0)=(x_0,y_0,z_0),
	$$
	admits a unique local solution
	$$
	U(t)=(x(t),y(t),z(t))\in C^1([t_0,T];\mathbb R^{3d})
	$$
	for some $T>t_0$.\\
	
	\noindent From \eqref{eq:first-order-section4}, we have
	$$
	\dot{x}(t)
	=
	\frac{1}{t}
	\left[
	y(t)-\lambda\bigl(x(t)-z(t)\bigr)
	\right].
	$$
	Since $y(t)$ and $z(t)$ are $C^1$, the right-hand side is $C^1$ on $[t_0,T]$. Hence
	$$
	x(t)\in C^2([t_0,T];\mathbb R^d).
	$$
	Using \eqref{eq:phase-section4}, \eqref{eq:first-order-section4}, and
	\eqref{eq:z-dynamics}, one recovers precisely \eqref{eq:system8}--\eqref{eq:z-dynamics}.
	Thus $(x(t),z(t))$ is a strong solution in the sense of Definition \ref{DF}.\\
	
	\noindent The same standard continuation theorem for finite-dimensional ordinary differential equations
	\cite{CL,Hartman} yields a maximal interval $[t_0,T_{\max})$. Suppose that
	$T_{\max}<\infty$ and that \eqref{eq:blowup-section4} fails. Then there exist
	a constant $C>0$ and a sequence $t_n\uparrow T_{\max}$ such that
	$$
	\|x(t_n)\|+\|\dot{x}(t_n)\|+\|z(t_n)\|
	\leq C.
	$$
	By \eqref{eq:phase-section4},
	$$
	y(t_n)=\lambda\bigl(x(t_n)-z(t_n)\bigr)+t_n\dot{x}(t_n),
	$$
	so $U(t_n)$ is bounded in $\mathbb R^{3d}$. Since $t_n\to T_{\max}$ and
	$T_{\max}>t_0>0$, the points $(t_n,U(t_n))$ remain in a compact subset of
	$(0,\infty)\times\mathbb R^{3d}$. On a fixed compact neighborhood of this set,
	$\mathcal G$ is bounded and locally Lipschitz in the state variable with a uniform
	Lipschitz constant. Hence the local existence time for the initial-value problems
	started at $(t_n,U(t_n))$ can be chosen uniformly positive for all sufficiently large
	$n$. For such $n$, this continuation extends the original solution beyond
	$T_{\max}$, contradicting maximality. Therefore \eqref{eq:blowup-section4} holds.
\end{proof}

\begin{proof}[Proof of Theorem \ref{thm:global-main-section4}]
	By Theorem \ref{thm:local-existence-section4}, there exists a unique maximal strong solution
	on $[t_0,T_{\max})$. Fix $x^\star\in\argmin F$. Thus, it suffices to prove that $T_{\max}=\infty$. In other words, we prove that the solution does not blow up in finite time.\\
	
	\noindent By Proposition \ref{prop:accelerated-lyapunov-a-section5}(i), applied on each $[t_0,T]\subset[t_0,T_{\max})$, we get
	\begin{equation*}
		E_{a,x^\star}(t)\le E_{a,x^\star}(t_0),
		\qquad
		t\in[t_0,T_{\max}).
	\end{equation*}
	In particular, there exist constants $C_y>0$ and $C_z>0$ such that
	\begin{equation}\label{eq:y-z-bounds-section4}
		\|y(t)\|\le C_y,
		\qquad
		\|z(t)-x^\star\|\le C_z,
		\qquad
		t\in[t_0,T_{\max}).
	\end{equation}
	
	\noindent It remains to prove that $x(t)$ is bounded. Define $
	h(t)=\|x(t)-x^\star\|^2$ and set $ R=
	\frac{C_y+\lambda C_z}{\lambda}.$
	Using \eqref{eq:first-order-section4}, we get
	$$
	\begin{aligned}
		\dot h(t)
		&=
		\frac{2}{t}
		\bigl\langle x(t)-x^\star,y(t)\bigr\rangle
		-
		\frac{2\lambda}{t}
		\|x(t)-x^\star\|^2
		+
		\frac{2\lambda}{t}
		\bigl\langle x(t)-x^\star,z(t)-x^\star\bigr\rangle.
	\end{aligned}
	$$
	Using \eqref{eq:y-z-bounds-section4}, we deduce that
	$$
	\dot h(t)
	\leq
	\frac{2\lambda}{t}\sqrt{h(t)}
	\left[
	R-\sqrt{h(t)}
	\right].
	$$
	Since $\lambda>0$, it follows that
	$$
	\dot h(t)<0
	\qquad
	\text{whenever } h(t)>R^2.
	$$
	
	\noindent We now prove that $h(t)$ is bounded on $[t_0,T_{\max})$. Set
	$$
	\rho:=\max\{h(t_0),R^2\}.
	$$
	We claim that
	$$
	h(t)\leq \rho
	\qquad
	\text{for all }t\in[t_0,T_{\max}).
	$$
	Suppose, by contradiction, that this claim is false. Then there exists
	$t_1\in[t_0,T_{\max})$ such that
	\(    h(t_1)>\rho.
	\)
	Since $h(t_0)\leq \rho<h(t_1)$ and $h$ is continuous, there exists a last time
	$\tau\in[t_0,t_1)$ such that
	$$
	h(\tau)=\rho
	$$
	and
	$$
	h(t)>\rho
	\qquad
	\text{for all }t\in(\tau,t_1].
	$$
	Since $\rho\geq R^2$, we have
	$$
	h(t)>R^2
	\qquad
	\text{for all }t\in(\tau,t_1].
	$$
	Therefore,
	$$
	\dot h(t)<0
	\qquad
	\text{for all }t\in(\tau,t_1].
	$$
	Integrating over $[\tau,t_1]$, we obtain
	$$
	h(t_1)-h(\tau)
	=
	\int_{\tau}^{t_1}\dot h(t)\,dt
	<0.
	$$
	Hence
	$$
	h(t_1)<h(\tau)=\rho,
	$$
	which contradicts the assumption that $h(t_1)>\rho$. Therefore,
	$$
	h(t)\leq \rho
	=
	\max\{h(t_0),R^2\}
	\qquad
	\text{for all }t\in[t_0,T_{\max}).
	$$
	Since $h(t)=\|x(t)-x^\star\|^2$, it follows that $x(t)$ is bounded on $[t_0,T_{\max})$.\\
	
	\noindent Since $x(t)$, $y(t)$, and $z(t)$ are bounded and $t\ge t_0>0$,
	\eqref{eq:first-order-section4} implies that $\dot{x}(t)$ is bounded on
	$[t_0,T_{\max})$. Therefore
	$$
	\|x(t)\|+\|\dot{x}(t)\|+\|z(t)\|
	$$
	cannot blow up in finite time. By the blow-up alternative in
	Theorem \ref{thm:local-existence-section4}, we must have
	$$
	T_{\max}=\infty.
	$$
	Therefore, the solution is global. This proves the theorem.
\end{proof}

\hfill

\begin{proof}[Proof of Proposition \ref{prop:a-one-section4}]
	Let $[t_0,T_{\max})$ be the maximal existence interval. Since $a=1$, equation \eqref{eq:z-dynamics} gives $    z(t)=z_0.$
	Recall  \eqref{eq:mechanical-energy-section5}:
	\begin{equation}\label{eq:a-one-energy-section4}
		\mathcal H(t)
		=
		\frac12\|\dot{x}(t)\|^2
		+
		F(x(t))-F_{\inf}
		+
		\frac{\gamma\lambda}{2t}\|r(t)\|^2
		+
		\frac{\eta\lambda}{4t^2}\Phi_M(r(t)).
	\end{equation}
	By Proposition \ref{prop:fixed-anchor-mechanical-section5}, applied on each $[t_0,T]\subset[t_0,T_{\max})$, we get
	\begin{equation}\label{eq:a-one-energy-bound-section4}
		\mathcal H(t)\le \mathcal H(t_0),
		\qquad
		t\in[t_0,T_{\max}).
	\end{equation}
	Since $F(x(t))-F_{\inf}\ge 0$, it follows from
	\eqref{eq:a-one-energy-section4} and \eqref{eq:a-one-energy-bound-section4} that
	$$
	\|\dot{x}(t)\|\le C
	$$
	for some constant $C>0$. Suppose, by contradiction, that $T_{\max}<\infty$.
	Then, for every $t\in[t_0,T_{\max})$,
	$$
	\|x(t)\|
	\le
	\|x(t_0)\|+C(T_{\max}-t_0).
	$$
	Thus $x(t)$ and $\dot{x}(t)$ are bounded on $[t_0,T_{\max})$. Since $z(t)=z_0$,
	the quantity
	$$
	\|x(t)\|+\|\dot{x}(t)\|+\|z(t)\|
	$$
	cannot blow up as $t\uparrow T_{\max}$, contradicting Theorem \ref{thm:local-existence-section4}.
	Hence $T_{\max}=\infty$. This proves the proposition.
\end{proof}

\hfill

\begin{proof}[Proof of Proposition \ref{prop:a-half-section4}]
	By Theorem \ref{thm:local-existence-section4}, there exists a unique maximal strong solution
	on $[t_0,T_{\max})$. When $a=1/2$, the phase equation
	becomes
	\begin{equation}\label{eq:a-half-phase-section4}
		\dot{y}(t)
		+
		\gamma y(t)
		+
		\frac{\eta}{t}D_M\bigl(x(t)-z(t)\bigr)y(t)
		=
		0.
	\end{equation}
	Taking the inner product of \eqref{eq:a-half-phase-section4} with $y(t)$ gives
	$$
	\frac{d}{dt}\frac12\|y(t)\|^2
	=
	-\gamma\|y(t)\|^2
	-
	\frac{\eta}{t}
	\bigl\langle
	D_M\bigl(x(t)-z(t)\bigr)y(t),
	y(t)
	\bigr\rangle.
	$$
	Since $\gamma\ge 0$, $\eta\ge 0$, and
	$$
	D_M\bigl(x(t)-z(t)\bigr)\succeq 0,
	$$
	we obtain
	$$
	\frac{d}{dt}\frac12\|y(t)\|^2\le 0.
	$$
	Therefore
	\begin{equation}\label{eq:y-bound-half-section4}
		\|y(t)\|\le \|y(t_0)\|,
		\qquad
		t\in[t_0,T_{\max}).
	\end{equation}
	Suppose, by contradiction, that $T_{\max}<\infty$. Since
	$t\in[t_0,T_{\max})$, using \eqref{eq:first-order-section4},
	\eqref{eq:y-bound-half-section4}, and $t\ge t_0>0$, there exists a constant $C>0$ such that
	\begin{equation}\label{eq:xdot-bound-half-section4}
		\|\dot{x}(t)\|
		\le
		C
		\left(
		1+\|x(t)\|+\|z(t)\|
		\right).
	\end{equation}
	Also, from \eqref{eq:z-dynamics} with $a=1/2$ and the linear-growth assumption
	\eqref{eq:linear-growth-section4},
	\begin{equation}\label{eq:zdot-bound-half-section4}
		\|\dot{z}(t)\|
		\le
		C
		\left(
		1+\|x(t)\|
		\right),
		\qquad t\in[t_0,T_{\max}).
	\end{equation}
	Combining these estimates gives
	\begin{equation}\label{eq:combined-bound-half-section4}
		\|\dot{x}(t)\|+\|\dot{z}(t)\|
		\le
		C
		\left(
		1+\|x(t)\|+\|z(t)\|
		\right).
	\end{equation}
	Let
	$$
	W(t)=\|x(t)\|+\|z(t)\|.
	$$
	Then, for every $t<T_{\max}$,
	$$
	W(t)
	\le
	W(t_0)
	+
	C\int_{t_0}^t
	\left(
	1+W(s)
	\right)\,ds.
	$$
	Gronwall's inequality yields a uniform bound for $W(t)$ on $[t_0,T_{\max})$, and
	\eqref{eq:xdot-bound-half-section4} gives the same for $\dot{x}(t)$. This contradicts
	the blow-up alternative in Theorem \ref{thm:local-existence-section4}. Hence
	$T_{\max}=\infty$.
\end{proof}

\end{document}